\newcommand{\norm}[1]{\left\|#1\right\|}
\newcommand{\interior}{\mathrm{int}\,}
\newcommand{\closure}{\mathrm{cl}\,}
\newcommand{\inProd}[2]{\langle #1 , #2 \rangle }
\newcommand{\dist}{ {\mathrm{dist}\,}}
\newcommand{\stdCone}{ {\mathcal{K}}}
\newcommand{\stdInt}{ {e}}
\newcommand{\matRank}{{\mathrm{ rank } \,}}	
\newcommand{\Diag}{{\mathrm{ Diag } \,}}
\newcommand{\diag}{{\mathrm{ diag } \,}}
\newcommand{\conv}{{\mathrm{ conv } \,}}
\newcommand{\Aut}{{\mathrm{ Aut } \,}} 
\newcommand{\supp}{{\mathrm{ supp } \,}} 
\newcommand{\jAlg}{\mathcal{E}}
\newcommand{\jFr}{\mathcal{J}}
\renewcommand{\Re}{\mathbb{R}}
\newcommand{\jProd}[2]{ {#1 \circ #2 } }
\newcommand{\Sd}[1]{{#1}^{\downarrow}}
\newcommand{\Pe}[1]{\mathcal{P}^{#1}}
\newcommand{\csub}{\partial _C}
\newcommand{\hsub}{\partial^{\infty}}
\newcommand{\msub}{\Diamond}
\newcommand{\idem}{\mathcal{I}}
\newcommand{\ReIm}{\overline{\Re}}
\def\dom{{\rm dom}\,}
\newcommand{\spec}[1]{F}
\DeclarePairedDelimiter\abs{\lvert}{\rvert}%
\renewcommand{\S}{\mathcal{S}}                    
\newcommand{\tr}{\mathrm{tr}\,}    
\newcommand{\RNum}[1]{\uppercase\expandafter{\romannumeral #1\relax}}
\newtheorem{definition}{Definition}
\newtheorem{lemma}[definition]{Lemma}
\newtheorem{proposition}[definition]{Proposition}
\newtheorem{example}[definition]{Example}
\newtheorem{theorem}[definition]{Theorem}
\newtheorem{remark}[definition]{Remark}
\newcommand{\COMM}[2]{{
\ifthenelse{\equal{#1}{AT}}{\color{red}}{
\ifthenelse{\equal{#1}{BFL}}{\color{blue}}}
[#1: #2]
}}
\title{Generalized subdifferentials  of spectral functions over Euclidean Jordan algebras}
\author{
Bruno F. Louren\c{c}o%
	\thanks{Department of Statistical Inference and Mathematics, Institute of Statistical Mathematics, 10-3 Midori-cho, Tachikawa, Tokyo 190-8562, Japan.
		(\texttt{bruno@ism.ac.jp})}
\and 
Akiko Takeda\thanks{
Department of Creative Informatics, Graduate School of Information Science and Technology,
University of Tokyo, Tokyo, Japan and  RIKEN  Center  for  Advanced  Intelligence  Project,  1-4-1, Nihonbashi, Chuo-ku, Tokyo  103-0027, Japan. (\texttt{takeda@mist.i.u-tokyo.ac.jp}) 
}
}
\begin{document}
	\maketitle
\begin{abstract}
This paper is devoted to the study of generalized 
subdifferentials of spectral functions over Euclidean Jordan algebras. 
Spectral functions appear often in optimization problems playing 
the role of ``regularizer'', ``barrier'', ``penalty function'' and many others.	
We provide formulae for the regular, approximate and horizon subdifferentials of spectral functions. %, without assuming that the 
%underlying algebra is simple. 
In addition, under local lower semicontinuity, we also furnish a formula for the Clarke subdifferential, thus extending an earlier result by Baes. As 
 application, we compute the generalized 
subdifferentials of the function that maps an element to its $k$-th largest 
eigenvalue. Furthermore, in connection with recent approaches for nonsmooth optimization, we present a study of the Kurdyka-{\L}ojasiewicz (KL) property
for spectral functions and prove a transfer principle for the KL-exponent.
In our proofs, we make extensive use of recent tools such as the commutation principle of Ram\'irez, Seeger and Sossa and majorization principles developed by Gowda.

\noindent \textbf{Keywords:} spectral functions, generalized subdifferential, approximating subdifferential, Euclidean Jordan algebra, Kurdyka-{\L}ojasiewicz inequality.
\end{abstract}

	\section{Introduction}	
Let $f:\Re^r \to \ReIm$ be a function that is \emph{symmetric}, i.e., $f(u)$ does not change if we permute the coordinates of $u \in \Re^r$. 	
Here, $\ReIm$ denotes the extended line $[-\infty,+\infty]$.
%Put otherwise, if $P$ is an $r\times r$ permutation matrix, then 
%$f(Pu) = u$ holds, for every $u \in \Re^r$.
Now, let us consider a Euclidean Jordan algebra $\jAlg$ of rank $r$, for example, 
the $r\times r$ symmetric matrices.
Then,  $f$ can be extended in a natural fashion to a function $\spec{f}$ over $\jAlg$ by defining for all $x \in \jAlg$
\[
\spec{f}(x) \coloneqq f(\lambda(x)),
\]
where $\lambda(x)\in \Re^r$ is the vector containing the eigenvalues of $x$ in nonincreasing order, i.e.,
\[
\lambda_1(x) \geq \cdots \geq \lambda_r(x).
\]
We call $F$ the \emph{spectral function induced by $f$}. 
Because $f$ is symmetric, it is known from the works of Baes \cite{B07}, Sun and Sun \cite{SS08}, Jeong and Gowda \cite{JG17} and others that several properties of $f$ are transferred to $\spec{f}$. 
For example, $f$ is convex if and only if $\spec{f}$ is convex. The same goes for differentiability.
Results of this type are sometimes called \emph{transfer results} or \emph{transfer principles}, e.g., \cite{JG17}.

Spectral functions are ubiquitous throughout optimization and recognizing that $F$ is a spectral function can make computing derivatives/subdifferentials of $F$ significantly simpler than if one tries to do so by scratch.
This is because transfer principles usually come with formulae that relate the derivatives/subdifferentials of $F$ and $f$.

Motivated by the needs of nonsmooth optimization, our goal in this paper is to obtain formulae for the regular, approximate and horizon subdifferentials of spectral functions without any extra assumptions such as local Lipschitzness. 
In nonsmooth optimization, the regular and approximate subdifferential are often used to express optimality conditions and in the analysis of algorithms. Also, conditions involving the horizon subdifferential are quite common to ensure that the function satisfies some desirable property. %ten play the role of ``constraint qualifications'
We will also obtain a formula for the Clarke subgradient 
with the assumption of local lower semicontinuity, which extends an 
earlier result by Baes~\cite{B06phd}.
We will use these formulae to compute the generalized subdifferentials of the eigenvalue functions in the context of Euclidean Jordan algebras, see Section~\ref{sec:eig}.

Another motivation comes from the so-called \emph{composite optimization}, where we wish to solve the problem
\begin{equation}
\min _{x \in \jAlg}\quad \Phi(x) = \psi(x) + F(x), \tag{OPT} \label{eq:opt}
\end{equation}
and only $\psi:\jAlg \to \Re$ is assumed to be smooth. It is 
common for the function $F$ to play the role of a ``regularizer'', ``penalty'' or ``barrier''. In those cases, $F$ is often a 
spectral function. Here 
are a few examples. In what follows, for $u \in \Re^r$, we denote its $p$-norm by $\norm{u}_p$ and the sum of the $\ell$ components with largest 
absolute value by $\abs{\norm{u}}_\ell$. 
\begin{align*}
F_1(x) = \mu\norm{\lambda(x)}_p,   & \qquad  F_2(x) =  -\mu \log \det(x),\\
F_3(x) = \mu(\norm{\lambda(x)}_1 - \abs{\norm{\lambda(x)}}_{\ell}),  & \qquad  F_4(x) =  \mu\, \matRank(x),
\end{align*}
where $\mu$ is a positive parameter. %and $\stdCone$ is the cone of 
%squares of the algebra $\jAlg$ (e.g., $\stdCone$ is the cone of $r  \times r$ positive semidefinite matrices when $\jAlg = \S^r$).
When $p = 1$, $F_1$ is the $l_1$ regularizer. $F_2$ is a multiple of the classical self-concordant barrier for the symmetric cone associated to  $\jAlg$. The function $F_3$ maps $x$ to the sum of the  $r-\ell$ eigenvalues of $x$ with smallest absolute value, which is an important function for dealing with rank constrained problems, see~\cite{GS10} and Section~4 in \cite{GTT18}. Here, we are expressing $F_3$ as a DC (difference of convex) function.
We observe that $F_1,F_2,F_3,F_4$ are all spectral functions, while 
$F_3$ and $F_4$ are nonsmooth and nonconvex.
In any case, under appropriate regularity conditions, a necessary condition for $x^*$ to be a local optimal solution 
to \eqref{eq:opt} is that 
\[
-\nabla\psi(x^*) \in \partial F(x^*),
\]
where $\partial F(x^*)$ is the approximate subdifferential of 
$F$ at $x^*$, see Exercise 8.8 and Theorem 8.15 in \cite{RW}.

Yet another motivation for this work is that the approximate subdifferential  is necessary in order to compute the so-called \emph{Kurdyka-{\L}ojasiewicz (KL) exponent}, which has been shown to control the convergence properties of many first-order methods as can be seen, for instance, in the classical work by Attouch, Bolte, Redont and Soubeyran \cite{ABRS10}.
For a recent discussion on this topic, see the work by Li and Pong \cite{LP18}.

While there are many criteria that can be used to show that a function satisfies the so-called \emph{KL-property}, it is often highly nontrivial to compute the KL-exponent \cite{LP18}. For instance, if we wish to compute 
the $KL$-exponent of $\Phi$, we have to analyze the approximate subdifferentials of $F$, because $\partial \Phi(x) = \nabla \psi(x) + \partial F(x)$, as can be seen in Exercise~8.8 of \cite{RW}.
In this paper, although we will not compute the KL-exponent of $\Phi$ itself,
as an application of our results, we will show that if $f$ is a symmetric function and $F$ is the corresponding spectral function, then $f$ and $F$ share the same KL-exponent. Admittedly, this is not a very powerful result, but it seems to be beyond what can be proved directly with the results of \cite{LP18} (see Remark~\ref{rem:comp}) and we believe it is a first step towards a more comprehensive study of the KL-exponent of composite functions where one of the functions is spectral.  
%For example, there are many nonsmooth problems that have the 
%following format
%
%\[
%\min _{x \in \jAlg} Q(x) + F(x),
%
%\]
%where $Q$ is a $C^1$ function and $F$ is a nonsmooth regularizer.
%It turns out that many of the regularizers that appear in the literature can be seen as spectral functions and the KL-exponent of $F$ is often useful for the computation of the KL-exponent of $Q + F$, see section ....
% 
\subsection{Previous works} 
Lewis \cite{Le96,Le96_2,Le99} has discussed extensively the case of 
spectral functions over symmetric real matrices and Hermitian complex matrices. In particular, in \cite{Le99}, Lewis gave expressions for 
the regular, approximate and horizon subdifferentials  of 
spectral functions over symmetric real matrices. A formula 
for Clarke subdifferentials was also given for the locally Lipschitz case.

Spectral functions over the algebra associated to the second order cone  
were initially studied by Fukushima, Luo and Tseng \cite{FLT02} and 
by Chen, Chen and Tseng~\cite{CCT04}. In \cite{CCT04}, there is a discussion of the Clarke subdifferential of locally Lipschitz spectral functions and 
Sendov \cite{Se07} gave  formulae for regular, approximate and horizon subdifferentials. 
Sendov also proved a formula for the Clarke subdifferential under the hypothesis of local lower semicontinuity.

In the general framework of Euclidean Jordan algebras, Baes \cite{B06phd,B07},  Sun and Sun \cite{SS08} and Jeong and Gowda \cite{JG16,JG17} proved several key results 
regarding spectral functions and the related notion of 
spectral sets. 
However, as far as we know, until now there were no results for the regular, approximate and horizon subdifferentials 
of spectral functions. Furthermore, results for the Clarke subgradient were 
only known in the locally Lipschitz case.
Related to Clarke subgradients, we mention in passing that Kong, Tun\c{c}el and Xiu proved an expression for the Clarke subgradient of the orthogonal projection of the symmetric cone associated to a Euclidean Jordan algebra~\cite{KTX09}.

%Parallel to that, there has been other interesting theoretical developments regarding Euclidean Jordan algebras and, in this work, we will make
%extensive use of two 
%tools developed recently. The first is a \emph{commutation principle} developed by 
%Ram\'irez, Seeger, Sossa \cite{RSS13} and extended by Gowda and Jeong~\cite{GJ17}, see Section~\ref{sec:com}. 
%The second is the \emph{majorization framework} for Euclidean Jordan algebras, developed by Gowda~\cite{Go17}, see Section~\ref{sec:hard}.

\subsection{Contributions of this work}
In this work, we have three contributions.
The first is a meta-formula for the generalized subdifferentials of a 
spectral function. We will show that if $F: \jAlg \to \ReIm$ is a 
spectral function induced by $f:\Re^r \to \ReIm$, then
there is a formula that relate the generalized subdifferentials 
of $F$ and $f$, see Theorems~\ref{theo:main}, \ref{theo:hull} and \ref{theo:clarke}.
%\begin{align}
%\msub  F(x) &= \{ s \in \jAlg \mid \exists \jFr \in \jFr(x,s) \text { with } \diag(s,\jFr) \in \msub f(\lambda(x))
%\},\tag{Transfer} \label{eq:in_transfer}
%\end{align}
%where $\msub$ is either the regular, approximate or horizong subdifferential, see Theorem~\ref{theo:main}. The formula is also valid if $F$ is locally lower 
%semicontinuous at $x$ and $\msub$ is the Clarke subdifferential, see 
%Theorem~\ref{theo:clarke}. 
%Here, we will omit a detailed explanation 
%of the notation appearing in \eqref{eq:in_transfer} and instead 
%provide an intuitive explanation. \eqref{eq:in_transfer} implies 
%that $s \in \msub F(x) = (\msub (f\circ \lambda))(x) $ if and only if $x$ and $s$ are ``simultaneously 
%diagonalizable'' and the eigenvalues of $s$ (in an appropriate order, not necessarily ordered)  belong to $(\msub f)(\lambda(x))$.
%Here, we will omit a detailed explanation and instead 
%provide an intuitive explanation. The idea is that  $s$ is a 
%generalized subgradient of $F$ at $x$ if and only if $x$ and $s$ are ``simultaneously  diagonalizable'' and the vector of eigenvalues of $s$ (in an appropriate order, not necessarily ordered)  is a generalized 
%subgradient of $f$ at $\lambda(x)$.

A feature of our results is that we will never assume that the algebra $\jAlg$  is simple, which makes some results more general, but a bit harder to 
prove. Every Jordan algebra can be decomposed as a direct sum of simple algebras and simplicity is, in many cases, a harmless hypothesis. Previous work by Lewis~\cite{Le99} and Sendov~\cite{Se07} can be seen as containing results for specific cases of simple Euclidean Jordan algebras.
However, because the generalized subdifferentials do not behave nicely with respect to partial subdifferentiation, there are cases where we cannot extend 
results from simple to general Euclidean Jordan algebras in a straightforward way. 
We emphasize that our results are directly applicable to a situation where, 
for example, $\jAlg$ is a direct product $\S^{r_1}\times \cdots \times \S^{r_{\ell}}$, where $\S^r$ denotes the space of $r\times r$ real symmetric matrices.
%See Section~\ref{sec:simple}, for more details. 

Our second contribution is providing formulae for the generalized 
subdifferentials of the function $\lambda_k: \jAlg \to \Re$, which maps 
an element $x \in \jAlg$ to its $k$-th largest eigenvalue, see Theorem~\ref{theo:eig}. We believe this is the first time such formulae are given in the context of Euclidean Jordan algebras.

Last, we will show a transfer principle of the KL-property for spectral 
functions and show that $F$ and $f$ must share the same KL-exponent, see 
Theorem~\ref{theo:kl}.

This work is divided as follows. In Section~\ref{sec:prel}, we review generalized subdifferentials. In Section~\ref{sec:jordan}, we overview the necessary concepts from the theory of Euclidean Jordan algebras. 
In Section~\ref{sec:trans}, we develop and present our main results regarding generalized subdifferentials of spectral functions. Finally, in Section~\ref{sec:kl} we discuss the KL-property and KL-exponent of spectral functions.

\section{Preliminaries}\label{sec:prel}
\subsection{Notation}
Given an element $u \in \Re^r$, we will denote its $i$-th component 
by $u_i$. We write $\Re^r_\geq$ for the cone of elements $u$ satisfying 
$u_1 \geq \cdots \geq u_r$.  We write $\Re^r_+$ for the nonnegative orthant, i.e., the elements $u \in \Re^r$ such that $u_i \geq 0$ for every $i$.
We will write $\Pe{r}$ for the \emph{group of 
$r\times r$ permutation matrices}. Given $u \in \Re^r$, we write 
$\Pe{r}(u)$ for the stabilizer subgroup of $u$, i.e., 
\[
\Pe{r}(u) \coloneqq \{P \in \Pe{r} \mid P(u) = u \}.
\]
The convex hull, the interior and the closure of a set $C$ will be denoted by $\conv C$, $\interior C$ and $\closure C$, respectively.
If $f:\Re^r\to \ReIm$ is a function, the domain of $f$ (i.e., the elements 
for which $f$ is finite) will be denoted by $\dom f$. 
We assume that $\Re^r$ is furnished with the usual Euclidean inner product $\inProd{\cdot}{\cdot}$ and the usual Euclidean norm $\norm{\cdot}$.
\subsection{Generalized subdifferentials}\label{sec:rev}
In this subsection, we  recall a few notions of generalized subdifferentials. However, the discussion on the Clarke subdifferential will be 
postponed until Section~\ref{sec:conv}.
Let $f: \Re^r \to \ReIm$ be a function and $u \in \dom f$.
We say that $d$ is a \emph{regular subgradient} 
of $f$ at $u$ if 
\begin{equation}\label{eq:reg_sub}
\liminf_{\substack{v\to 0 \\ v\neq 0} } \frac{f(u+v) - f(u) - \inProd{d}{v}}{\norm{v}} \geq 0.
\end{equation}
The set of regular subgradients of $f$ at $u$ is denoted by $\hat \partial f(u)$ and is called the \emph{regular subdifferential} of $f$ at $u$.
From \eqref{eq:reg_sub} it follows that $d \in \hat \partial f(u)$ if and only if for every $\epsilon > 0$ there exists some $\delta > 0$ such that $\norm{v} \leq \delta$ implies 
\begin{equation}\label{eq:reg_sub2}
f(u+v) - f(u) - \inProd{d}{v} \geq -\epsilon\norm{v}
\end{equation}
We say that $d$ is an \emph{approximate subgradient} (also called \emph{limiting subgradient}) of $f$ at $u$ if
there are sequences $\{u^k\}$, $\{d^k\}$ such that every $k$ satisfies $d^k \in \hat \partial f(u^k)$ and the following limits hold:
\[
u^k \to u, \qquad f(u^k)\to f(u), \qquad d^k \to d.
\]
The set of approximate subgradients of $f$ at $u$ is denoted by 
$\partial f(u)$ and is called the \emph{approximate subdifferential} of 
$f$ at $u$. 
%The set of convex combinations of 
%$\partial f(u)$ is denoted by $\csub f(u)$. An 
%element $d \in \csub f(u)$ is called a 
%\emph{Clarke subgradient} of $f$.

We say that $d$ is an \emph{horizon subgradient} of 
$f$ at $u$  if
there are sequences $\{u^k\}$, $\{d^k\}$, $\{t^k\}$ such that every $k$ satisfies $ d^k \in \hat \partial f(u^k)$ and the following limits hold:
\[
u^k \to u, \qquad f(u^k)\to f(u), \qquad t^kd^k \to d, \qquad t^k \downarrow 0.
\]
Here, $ t^k \downarrow 0$ indicates that all the $t^k$ are nonzero and that $t^k$ is a monotone nonincreasing sequence converging to zero.
The set of horizon subgradients, called the \emph{horizon subdifferential}, will be denoted 
by $\hsub f(u)$. In variational analysis, conditions involving the horizon subdifferential are quite common, e.g., see Corollary 10.9 in \cite{RW}.
See also Section~8.B in \cite{RW} for examples of the subdifferentials discussed so far. 
%For example, let $f^1: \Re^r \to \ReIm,  f^2: \Re^r \to \ReIm$ be two arbitrary functions. Under certain technical conditions on $f^1$ and $f^2$, if the only elements $d_1\in \hsub f^1(u), d_2 \in \hsub f^2(u)$ satisfying $d_1+d_2 = 0$ are $d_1 = d_2 = 0$, then
%\begin{equation}\label{eq:hoz_app}
%\partial (f^1+f^2)(u) = \partial f^1(u) + \partial f^2(u),
%\end{equation}
%see Corollary 10.9 in \cite{RW}.

%%Add explanation about the \msub

We will also make use of the following characterization of regular subgradients. 
\begin{proposition}[Rockafellar and Wets, Proposition 8.5 in \cite{RW}]\label{prop:rw}
	Let $d \in \Re^r$. Then,	$d \in \hat \partial f(u)$ if and only if, 	
	on some neighborhood $U$ of $u$ there  exists a $C^1$ function $h:U \to \Re$ such that 
	\begin{align*}
	h(u) &= f(u), \quad \nabla h(u) = d\\
	h(v) &\leq f(v), \quad \forall v \in U.
	\end{align*}
\end{proposition}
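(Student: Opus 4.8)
The plan is to establish the two implications separately: the ``if'' direction is immediate from differentiability of $h$ at $u$, while the ``only if'' direction requires constructing the supporting function by hand. For the ``if'' direction, suppose $h$ is as described. Since $h$ is $C^1$ on $U$, it is in particular differentiable at $u$, so $h(u+v) - h(u) - \inProd{\nabla h(u)}{v} = o(\norm{v})$ as $v \to 0$. Using $h(u) = f(u)$, $\nabla h(u) = d$ and $h(u+v) \le f(u+v)$ for $u+v \in U$, we get
\[
f(u+v) - f(u) - \inProd{d}{v} \;\ge\; h(u+v) - h(u) - \inProd{d}{v} \;=\; o(\norm{v}),
\]
so the $\liminf$ in \eqref{eq:reg_sub} is nonnegative and $d \in \hat\partial f(u)$.

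For the ``only if'' direction, I would first normalize: replacing $f$ by $v \mapsto f(u+v) - f(u) - \inProd{d}{v}$ reduces matters to the case $u = 0$, $f(0) = 0$ and $d = 0$, because a candidate $h$ for the normalized function produces one for the original by the inverse affine adjustment, and this adjustment preserves being $C^1$. In the normalized case, \eqref{eq:reg_sub2} says that for every $\epsilon > 0$ there is $\delta > 0$ with $f(v) \ge -\epsilon \norm{v}$ whenever $\norm{v} \le \delta$; taking $\epsilon = 1$ shows $f$ is bounded below on some ball $\{\norm{v} \le \delta_0\}$, so
\[
\mu(t) \;\coloneqq\; \max\Bigl\{\,0,\; -\inf_{\norm{v} \le t} f(v)\,\Bigr\}, \qquad 0 \le t \le \delta_0,
\]
is finite, nondecreasing, satisfies $\mu(0) = 0$ and $f(v) \ge -\mu(\norm{v})$ for $\norm{v} \le \delta_0$, and \eqref{eq:reg_sub2} forces $\mu(t) = o(t)$ as $t \downarrow 0$.

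The crux is then to produce a $C^1$ function $\phi : [0,\delta] \to \Re$, for some $0 < \delta \le \delta_0$, with $\phi(0) = 0$, $\phi'(0) = 0$ and $\phi(t) \ge \mu(t)$ on $[0,\delta]$ --- that is, to majorize the monotone, $o(t)$ function $\mu$ by a smooth function flat at the origin. One clean way is a multiplicative mollification: fix a nonnegative smooth $\eta$ supported in $(1,2)$ with $\int_1^2 \eta = 1$ and set $\phi(t) \coloneqq \int_1^2 \mu(\sigma t)\,\eta(\sigma)\,d\sigma$ for $t > 0$, $\phi(0) \coloneqq 0$; monotonicity of $\mu$ gives $\mu(t) \le \phi(t) \le \mu(2t)$, and $\mu(t) = o(t)$ then yields $\phi(t) = o(t)$ together with $\phi'(t) \to 0$ as $t \downarrow 0$, so that $\phi \in C^1([0,\delta])$. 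Granting such a $\phi$, set $h(v) \coloneqq -\phi(\norm{v})$. Away from $0$ this is $C^1$ with $\nabla h(v) = -\phi'(\norm{v})\,v/\norm{v}$; since $\phi'(\norm{v}) \to \phi'(0) = 0$ and $\phi(t) = o(t)$, one checks that $h$ is $C^1$ on a neighborhood of $0$ with $h(0) = 0$, $\nabla h(0) = 0$, and $h(v) = -\phi(\norm{v}) \le -\mu(\norm{v}) \le f(v)$ there. Undoing the normalization yields the required $h$ near $u$.

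The step I expect to be the main obstacle is this smoothing: one must majorize a monotone little-$o(t)$ function by a genuinely $C^1$ function that is flat at $t = 0$ --- the flatness $\phi'(0) = 0$, equivalently $\phi(t) = o(t)$, is exactly what makes the radial function $v \mapsto \phi(\norm{v})$ differentiable at the origin --- and checking the $C^1$ property of $\phi$ and of $h$ up to and including $0$ requires a little care because $\mu$ is merely monotone, not smooth. The normalization bookkeeping and the remaining verifications are routine.
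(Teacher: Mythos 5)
The paper does not actually supply a proof of this proposition --- it is quoted verbatim from Rockafellar and Wets as Proposition~8.5 of \cite{RW} --- so there is no in-paper argument to compare against. Your reconstruction follows the standard line of attack and is correct in outline: the ``if'' direction is immediate from differentiability of $h$ at $u$, and the ``only if'' direction correctly reduces to $u=0$, $f(0)=0$, $d=0$, extracts the monotone minorant $\mu(t)$ with $\mu(0)=0$, $f(v)\ge -\mu(\norm{v})$, and $\mu(t)=o(t)$, mollifies it to a $C^1$ majorant $\phi$ that is flat at the origin, and sets $h(v)=-\phi(\norm{v})$; the final checks that $h$ is $C^1$ with $\nabla h(0)=0$ and $h\le f$ all go through.

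The one place you gloss over --- and flag yourself --- is the $C^1$ verification for $\phi$, and it deserves a word because $\mu$ can be discontinuous, so one cannot differentiate $\int_1^2\mu(\sigma t)\,\eta(\sigma)\,d\sigma$ under the integral sign in the naive way (there is no $\mu'$). The fix is the change of variables $\tau=\sigma t$, which moves all the $t$-dependence into the smooth kernel: $\phi(t)=\tfrac1t\int_0^\infty\mu(\tau)\,\eta(\tau/t)\,d\tau$. Now the integrand is $C^\infty$ in $t>0$ for each fixed $\tau$ and compactly supported, so Leibniz' rule applies and yields, after changing back,
$\phi'(t)=-\tfrac1t\int_1^2\mu(\sigma t)\,(\sigma\eta(\sigma))'\,d\sigma$. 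Since $\int_1^2(\sigma\eta(\sigma))'\,d\sigma=0$, one may subtract $\mu(t)$ inside the integral to get $\abs{\phi'(t)}\le\tfrac{\mu(2t)}{t}\int_1^2\abs{(\sigma\eta(\sigma))'}\,d\sigma\to 0$, which gives both continuity of $\phi'$ on $(0,\delta]$ (via dominated convergence, since $\mu$ has only countably many discontinuities) and $\phi'(t)\to 0=\phi'(0)$. With that, $\phi\in C^1([0,\delta])$ and the rest of your argument closes the proof.
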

In this paper,  sometimes we will prove results that are valid for several 
different notions of subdifferential. In that case, we use the symbol $\msub$ as a placeholder for some unspecified subdifferential, e.g., see Theorem~\ref{theo:main}. 
%Use smooth bump functions!
\section{Euclidean Jordan algebras}\label{sec:jordan}
Here, we give a brief overview of Jordan algebras and review the necessary tools to prove our results. More details can be found in Faraut and Kor\'anyi's book \cite{FK94} or in the survey by Faybusovich \cite{FB08}.
First of all, a Euclidean Jordan algebra $(\jAlg,\circ)$ is a finite dimensional real vector space 
$\jAlg$ equipped with a bilinear product $\circ:\jAlg\times \jAlg\to \jAlg$ and an inner product $\inProd{\cdot}{\cdot}$
satisfying the following properties:
\begin{enumerate}[label=$(\arabic*)$]
	\item $\jProd{x}{y} = \jProd{y}{x}$,
	\item $\jProd{x}({\jProd{x^2}{y}}) = \jProd{x^2}({\jProd{x}{y}})$, where $x^2 = \jProd{x}{x}$,
	\item $\inProd{\jProd{x}{y}}{z} = \inProd{x}{\jProd{y}{z}}$,
\end{enumerate}
for all $x,y, z \in \jAlg$. We can always assume that a Euclidean Jordan algebra has an element $e$ that satisfies $\jProd{e}{x} = x$, for all $x \in \jAlg$. Such an element $e$ is called the \emph{identity element}.
An element $c \in \jAlg$ satisfying $c^2 = c$ is called an \emph{idempotent}. 
A nonzero idempotent $c$ that cannot be written as the sum of two nonzero idempotents $\hat c, \tilde c$ satisfying $\jProd{\hat c}{\tilde c} = 0$ is called a \emph{primitive idempotent}.

In a Euclidean Jordan algebra the following spectral theorem holds.
\begin{theorem} [Spectral Theorem, see Theorem III.1.2 in \cite{FK94}]\label{theo:spec}
	Let $(\jAlg, \jProd{}{} )$ be a Euclidean Jordan algebra and let $x \in \jAlg$. Then there are	 primitive idempotents $ [c_1, \dots, c_r]$ satisfying $c_1 + \cdots + c_r  = \stdInt$ and
	\begin{flalign*}
	& \jProd{c_i}{c_j} = 0 \, \,\,\,\, \qquad \qquad \text{ for } i \neq j, 
	\end{flalign*}
	and  unique real numbers $\alpha _1, \ldots, \alpha _r$ satisfying
	\begin{equation}		
	x = \sum _{i=1}^r \alpha _i c_i \label{eq:dec}.
	\end{equation}
\end{theorem}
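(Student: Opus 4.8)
The statement is the classical spectral theorem for Euclidean Jordan algebras, so I will only outline the standard argument. The plan is to reduce everything to a single commutative associative subalgebra, use the Euclidean inner product to force the eigenvalues to be real and the idempotents to be orthogonal, and then refine the idempotents to primitive ones. First I would establish \emph{power associativity}: the subalgebra $\Re[x]$ generated by $x$ and $e$ is associative and commutative, so that $x^m\circ x^n=x^{m+n}$ unambiguously. This is the only place the Jordan identity (axiom $(2)$) is really used, and I would prove it by linearizing that identity — a purely formal, if tedious, computation — or simply cite \cite{FK94}.

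Next, working inside the finite-dimensional commutative associative algebra $A\coloneqq\Re[x]$, the map $M\colon A\to A$ given by $M(a)=x\circ a$ is ordinary associative multiplication, so $p(M)(e)=p(x)$ for every polynomial $p$, and hence the minimal polynomial $m$ of the \emph{element} $x$ coincides with the minimal polynomial of the \emph{operator} $M$ on $A$. By axiom $(3)$ we have $\inProd{x\circ a}{b}=\inProd{a}{x\circ b}$, so $M$ is self-adjoint with respect to $\inProd{\cdot}{\cdot}$ restricted to $A$; a self-adjoint operator on a real inner product space has minimal polynomial with distinct real roots, so $m(t)=\prod_{i=1}^{k}(t-\eig_i)$ with the $\eig_i\in\Re$ pairwise distinct. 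Because $m$ is intrinsic to $x$, so is the set $\{\eig_1,\dots,\eig_k\}$.

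Since $A\cong\Re[t]/(m(t))\cong\prod_{i=1}^{k}\Re$ by the Chinese remainder theorem, I would set $c_i\coloneqq e_i(x)$, where $e_i$ is the Lagrange polynomial with $e_i(\eig_j)=\delta_{ij}$; the congruences $e_i^2\equiv e_i$, $e_ie_j\equiv0$ $(i\neq j)$, $\sum_ie_i\equiv1$ and $\sum_i\eig_ie_i\equiv t$ modulo $m$ translate immediately into $c_i\circ c_i=c_i$, $c_i\circ c_j=0$, $\sum_ic_i=e$ and $x=\sum_i\eig_ic_i$. This already gives a spectral decomposition, but with a complete system of orthogonal idempotents rather than primitive ones. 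To finish, each $c_i$ that is not primitive splits as a sum of two nonzero orthogonal idempotents; substituting a full refinement of every $c_i$ into $x=\sum_i\eig_ic_i$ produces $x=\sum_j\alpha_jc_j$ with the $c_j$ primitive, mutually orthogonal and summing to $e$. Uniqueness of the $\alpha_j$ is then immediate: multiplying $x=\sum_j\alpha_jc_j$ by $c_i$ and using orthogonality and idempotency gives $x\circ c_i=\alpha_ic_i$, which pins down $\alpha_i$ since $c_i\neq0$.

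The two genuinely nontrivial points are the power associativity of $\Re[x]$ (Step 1) and, in the last step, the claim that the refinement terminates and that the resulting complete system of orthogonal primitive idempotents — a Jordan frame — always has exactly $r$ members, equivalently that $r=\mathrm{rank}\,\jAlg$ is a well-defined invariant. I expect the latter to be the main obstacle: it is not linear algebra over a commutative algebra but requires the Peirce decomposition associated to an idempotent (or, equivalently, properties of the generic minimal polynomial) to bound chains of idempotents and to see that the number of primitive blocks is constant. Everything else in the argument is elementary once Step 1 is available.
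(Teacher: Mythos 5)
This theorem is cited directly from Faraut and Kor\'anyi (Theorem III.1.2 in \cite{FK94}); the paper supplies no proof of its own, so the only meaningful comparison is against the standard argument in that reference. Your outline follows that standard route: pass to the power-associative commutative subalgebra $\Re[x]$, use axiom $(3)$ to make $L_x$ self-adjoint on it, deduce that the minimal polynomial of $x$ has distinct real roots, construct a complete orthogonal system of idempotents by Lagrange interpolation / the Chinese remainder theorem, and then refine those idempotents to primitive ones. You have also correctly isolated the two points that genuinely require work: power associativity of $\Re[x]$, and the fact that the rank $r$ is a well-defined invariant so that every Jordan frame has exactly $r$ members.

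One point you understate is uniqueness. Computing $x \circ c_i = \alpha_i c_i$ shows only that the coefficients are determined \emph{once the Jordan frame is fixed}, which is trivial. The uniqueness actually asserted by the theorem — and used throughout the paper, which speaks of ``the eigenvalues of $x$'' as an intrinsic quantity and uses the map $\lambda(\cdot)$ freely — is that the multiset $\{\alpha_1,\dots,\alpha_r\}$ is independent of the choice of Jordan frame. This is not immediate from your computation. It follows from the first spectral theorem (the distinct values and the intrinsic idempotents $b_1,\dots,b_k$ produced by your CRT step are unique, being built from the minimal polynomial) combined with exactly the rank machinery you flagged: for each $b_i$ one needs that the number of primitive idempotents in any complete orthogonal refinement of $b_i$ is constant, so that the multiplicity of each distinct eigenvalue is well defined. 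In short, the uniqueness claim is downstream of the obstacle you identified in your final paragraph, not an elementary consequence of orthogonality.
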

The $r$ that appears in Theorem \ref{theo:spec} only depends on the algebra $\jAlg$ and is called the \emph{rank of $\jAlg$}.
The $\alpha _i$ in Theorem \ref{theo:spec} are called 
the \emph{eigenvalues of $x$}. Although unique, the eigenvalues of $x$ might be repeated and \emph{they are not necessarily in nonincreasing/nondecreasing order}.
We define the \emph{rank of $x$} as the number of nonzero $\alpha_i$'s appearing in \eqref{eq:dec}. 
The ordered set $[c_1,\ldots, c_r]$ in Theorem \ref{theo:spec} is called a 
\emph{Jordan frame for $x$}. 

Here, we are using the notation $[c_1,\ldots, c_r]$ instead of 
$\{c_1,\ldots, c_r\}$ to emphasize that the order of the elements is taken into account, so, for example, $[c_1,c_2]$ and $[c_2,c_1]$ are different ordered sets.
Although $x$ might have many different Jordan frames, the sum of primitive idempotents associated to some eigenvalue must be unique.
\begin{proposition}[Unique sum of primitive idempotents, see Theorems~III.1.1 and III.1.2 in \cite{FK94}%, or the statement of Theorem~5 in \cite{B07}
]\label{prop:unique_sum}
Let $x \in \jAlg$ and $[c_1,\cdots, c_r], [c_1',\cdots, c_r']$ be two 
Jordan frames for $x$. Suppose that 
\[
x = \sum _{i=1}^r \alpha _i c_i = \sum _{i=1}^r \alpha _i' c_i'.
\]
Then, for every $\alpha \in \Re$,  we have
\[
\sum _{i \text { with } \alpha _i = \alpha} c_i = \sum _{i \text{ with }\alpha _i' = \alpha} c_i'.
\]
\end{proposition}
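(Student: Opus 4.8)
The plan is to realize each partial sum $\sum_{i\,:\,\alpha_i=\alpha}c_i$ as the value at $x$ of a single real polynomial that is built only from the \emph{set} of eigenvalues of $x$, and therefore cannot depend on the choice of Jordan frame. Once this is done, the statement is immediate: the left-hand and right-hand sides both equal the same element $q(x)$.

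First I would set up polynomial calculus in $x$. Recall that a Euclidean Jordan algebra is power associative --- a standard consequence of axiom $(2)$ --- so that the powers $x^k$ ($k\ge 0$, with $x^0=\stdInt$) are unambiguously defined and $q(x)$ makes sense for every $q\in\Re[t]$. Using $c_i\circ c_j=0$ for $i\ne j$, $c_i\circ c_i=c_i$, and $\sum_i c_i=\stdInt$, an easy induction on the decomposition $x=\sum_{i=1}^r\alpha_ic_i$ of Theorem~\ref{theo:spec} gives $x^k=\sum_{i=1}^r\alpha_i^k c_i$, and hence, by linearity, $q(x)=\sum_{i=1}^r q(\alpha_i)\,c_i$ for every $q\in\Re[t]$. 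The crucial point is that the left-hand side depends only on $x$, not on the frame.

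Next I would check that the \emph{set} of distinct eigenvalues of $x$ is intrinsic. Let $\mu_1,\dots,\mu_s$ be the distinct values among $\alpha_1,\dots,\alpha_r$ and $\nu_1,\dots,\nu_t$ those among $\alpha_1',\dots,\alpha_r'$, and set $p(t)=\prod_{k=1}^s(t-\mu_k)$. Then $p(x)=\sum_i p(\alpha_i)c_i=0$. Evaluating the same element $p(x)$ through the primed frame and grouping terms, $0=\sum_{k=1}^t p(\nu_k)\,d_k$, where $d_k:=\sum_{i\,:\,\alpha_i'=\nu_k}c_i'$ are nonzero, pairwise orthogonal idempotents, hence linearly independent (multiply the relation by $d_m$ to kill all but one term). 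Therefore $p(\nu_k)=0$ for all $k$, i.e.\ $\{\nu_1,\dots,\nu_t\}\subseteq\{\mu_1,\dots,\mu_s\}$; by symmetry the two sets coincide. (Equivalently, both are the root set of the minimal polynomial of $x$.)

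Finally, fix $\alpha\in\Re$. If $\alpha$ is not an eigenvalue of $x$, both sides of the claimed identity are empty sums, hence $0$. Otherwise $\alpha=\mu_j$ for some $j$, and I would take the Lagrange interpolation polynomial $q_j$ determined by $q_j(\mu_j)=1$ and $q_j(\mu_k)=0$ for $k\ne j$; then applying the formula of the first step in each of the two frames yields
\[
\sum_{i\,:\,\alpha_i=\mu_j}c_i \;=\; q_j(x) \;=\; \sum_{i\,:\,\alpha_i'=\mu_j}c_i',
\]
which is exactly the assertion. The only genuinely non-routine ingredient is the polynomial calculus of the first step --- power associativity together with the identity $x^k=\sum_i\alpha_i^kc_i$ --- since this is the single place where the Jordan structure, rather than plain bookkeeping with orthogonal idempotents, is used; it is, however, part of the standard development leading to Theorem~\ref{theo:spec} in Faraut and Kor\'anyi~\cite{FK94}, so the argument is short.
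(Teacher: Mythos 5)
Your proof is correct. It is essentially the standard argument from Faraut and Kor\'anyi (Theorems~III.1.1 and III.1.2), which the paper cites rather than reproves: the key device of realizing the eigenprojection $\sum_{i:\alpha_i=\alpha} c_i$ as $q(x)$ for a Lagrange interpolation polynomial $q$, using power associativity and the identity $x^k=\sum_i\alpha_i^k c_i$ to make frame-independence manifest, is precisely how the result is established there.
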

We define the eigenvalue map $\lambda:\jAlg \to \Re^r_\geq$ as the map
satisfying 
\[
\lambda(x) \coloneqq (\lambda_1(x), \ldots, \lambda _r(x)),
\]
where $\lambda_1(x) \geq \cdots \geq \lambda _r(x)$.
Here, $\lambda_i(x)$ denotes the $i$-th largest eigenvalue of $x$.

The trace map $\tr:\jAlg \to \Re$ is defined as 
\[
\tr(x) \coloneqq \lambda_1(x) + \cdots + \lambda _r(x).
\]
In fact, the trace map is a linear function.
Furthermore, it can be shown that the function that maps 
$x,y \in \jAlg$ to $\tr(\jProd{x}{y})$ is an inner product satisfying
Property $(3)$ of the definition Euclidean Jordan algebras.
Henceforth, we shall assume that the inner product $\inProd{\cdot}{\cdot}$ is 
given by 
\begin{equation}\label{eq:inner}
\inProd{x}{y} = \tr (\jProd{x}{y}),\quad \forall x,y \in \jAlg.
\end{equation}
Under this inner product, $\tr(x) = \inProd{e}{x}$ for all $x \in \jAlg$ and
 elements of any Jordan frame are mutually orthogonal. That is, if $\jFr = [c_1,\ldots, c_r]$ is Jordan frame, then $\inProd{c_i}{c_j} = 0$ if $i \neq j$.%, which is clear from the definition of $\inProd{\cdot}{\cdot}$, since $\jProd{c_i}{c_j} = 0$.

The norm induced by $\inProd{\cdot}{\cdot}$ is given by 
\begin{equation*}
\norm{x} = \sqrt{\tr(x^2)}. \label{eq:norm_jordan}
\end{equation*}
With that, any primitive idempotent $c$ satisfies $\norm{c} = 1$.
Furthermore, the  map $\lambda$ becomes a Lipschitz continuous function with Lipschitz constant $1$, when $\Re^r$ is equipped with the usual Euclidean norm.
We now  summarize some important properties of $\lambda$.
\begin{lemma}[Properties of the eigenvalue map]\label{lem:eig}
Let $\jAlg$ be a Euclidean Jordan algebra of rank $r$ and let
$\lambda : \jAlg \to \Re^r_{\geq}$ be the eigenvalue map.
The following properties hold.
\begin{enumerate}[label=$(\roman*)$]
	\item $\norm{\lambda(x)-\lambda(y)} \leq \norm{x-y}$ holds, for all 
	$x,y \in \jAlg$. %Here, the norm on the left-hand-side is the Euclidean norm and on the right-hand-side is the one given by 
	\item For every $x \in \jAlg$, $\lambda$ has directional derivatives along all directions. Furthermore, letting $\lambda'(x;z)$ denote the directional derivative of $\lambda$ at $x$ along $z$, the following limit holds
\[
	\lim _{z\to 0} \frac{\lambda(x+z)-\lambda(x) - \lambda'(x;z) }{\norm{z}} = 0,
\]
where $\lambda'(x;z) \coloneqq \lim _{t\to 0} \frac{\lambda(x+tz)-\lambda(x)}{t}$.
\end{enumerate}
\end{lemma}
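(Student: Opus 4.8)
The plan is to prove the two items separately, using item (i) to bootstrap item (ii).

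For item (i), I would reduce everything to the von Neumann--Theobald type trace inequality $\inProd{x}{y}\le\inProd{\lambda(x)}{\lambda(y)}$, valid for all $x,y\in\jAlg$; this is by now a standard majorization fact over Euclidean Jordan algebras (see Baes \cite{B07}; it also fits into Gowda's majorization framework). Granting it, write $x=\sum_i\alpha_i c_i$ as in Theorem~\ref{theo:spec}. Since $c_i\circ c_j=0$ for $i\neq j$, $c_i\circ c_i=c_i$, and $\tr(c_i)=\norm{c_i}^2=1$, we get $x^2=\sum_i\alpha_i^2 c_i$ and hence $\norm{x}^2=\tr(x^2)=\sum_i\alpha_i^2=\norm{\lambda(x)}^2$, and likewise for $y$. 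Therefore
\[
\norm{x-y}^2=\norm{\lambda(x)}^2-2\inProd{x}{y}+\norm{\lambda(y)}^2\ \ge\ \norm{\lambda(x)}^2-2\inProd{\lambda(x)}{\lambda(y)}+\norm{\lambda(y)}^2=\norm{\lambda(x)-\lambda(y)}^2,
\]
which is exactly (i). (If one prefers not to invoke the trace inequality as a black box, it can itself be obtained from the Ky~Fan type variational formula for the partial sums $\lambda_1(x)+\cdots+\lambda_k(x)$, but that is the technical core either way.)

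For item (ii), I would split the claim into (a) existence of the radial directional derivative $\lambda'(x;z)$ for all $x,z$, and (b) the upgrade to the uniform first-order expansion displayed in the Lemma. Step (b) is soft and uses (i): $\lambda$ is globally $1$-Lipschitz, and a locally Lipschitz map admitting radial directional derivatives at a point is automatically Hadamard directionally differentiable there, i.e. $\lim_{t\downarrow 0,\,w\to z}(\lambda(x+tw)-\lambda(x))/t=\lambda'(x;z)$; a routine compactness argument over the unit sphere, using that $z\mapsto\lambda'(x;z)$ is positively homogeneous and (by (i)) $1$-Lipschitz, rephrases Hadamard differentiability as precisely the stated limit $\lambda(x+z)-\lambda(x)-\lambda'(x;z)=o(\norm{z})$. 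For step (a), fix $x$, let $\mu_1>\cdots>\mu_s$ be its distinct eigenvalues and $c^{(j)}=\sum_{\alpha_i=\mu_j}c_i$ the associated idempotents (well defined by Proposition~\ref{prop:unique_sum}), with multiplicities $m_j=\matRank(c^{(j)})$, and let $V_{jj}$ be the rank-$m_j$ Peirce subalgebra $V(c^{(j)},1)$. For small $t$ the $r$ eigenvalues of $x+tz$ fall into $s$ clusters, the $j$-th one clinging to $\mu_j$ and containing exactly $m_j$ of them; a first-order perturbation estimate shows that these $m_j$ eigenvalues are $\mu_j+t\,\lambda_\ell(z_{jj})+o(t)$, $\ell=1,\dots,m_j$, where $z_{jj}$ is the Peirce component of $z$ in $V_{jj}$. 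Because $\mu_1>\cdots>\mu_s$, sorting all $r$ perturbed values in nonincreasing order merely concatenates the clusters, so $\lambda'(x;z)$ exists and equals the concatenation $(\lambda(z_{11}),\dots,\lambda(z_{ss}))$.

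The main obstacle is the clustered perturbation estimate inside step (a): showing that to first order the $j$-th cluster of eigenvalues of $x+tz$ is governed by the compression $z_{jj}$ alone, the off-diagonal Peirce components $z_{jk}$ ($j\neq k$) contributing only at order $t^2$. This is the Euclidean Jordan algebra analogue of classical perturbation theory for clustered matrix eigenvalues; it can be carried out by a Gershgorin/resolvent type argument directly in $\jAlg$, or --- since we never assume $\jAlg$ simple --- by first reducing to the simple summands (where $\lambda$ is the merge-sort of the factor eigenvalue maps, and merge-sort is piecewise linear hence B-differentiable) and invoking the known directional differentiability of $\lambda$ on each simple factor (Baes \cite{B07}; see also the semismoothness analysis of Sun and Sun \cite{SS08}). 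Everything else is bookkeeping: the Peirce decomposition is orthogonal, $\norm{z_{jj}}\le\norm{z}$, and the concatenated map $z\mapsto(\lambda(z_{11}),\dots,\lambda(z_{ss}))$ is positively homogeneous and $1$-Lipschitz, which is all that step (b) consumes.
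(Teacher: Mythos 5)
Your argument is correct, but it takes a more self-contained route than the paper, which proves the lemma almost entirely by citation: item~(i) is attributed to Baes (Corollary~24 of \cite{B07}), and item~(ii) is obtained by combining Baes' directional differentiability of each $\lambda_i$ (Theorem~36 of \cite{B07}) with the general fact (Lemma~2.1.1 and Remark~2.1.2 of \cite{HUL96}) that a Lipschitz map which is directionally differentiable in every direction automatically satisfies the uniform first-order expansion. Your step~(b) of item~(ii) is exactly this last fact, phrased via Hadamard differentiability and a compactness argument on the unit sphere, so there you and the paper coincide. Where you genuinely diverge is item~(i): you unpack the $1$-Lipschitzness into the von Neumann--Theobald trace inequality $\inProd{x}{y}\le\inProd{\lambda(x)}{\lambda(y)}$ together with the norm identity $\norm{x}=\norm{\lambda(x)}$, after which expanding $\norm{x-y}^2$ gives the bound in two lines. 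This is a clean derivation that makes the mechanism visible; its cost is that it rests on the trace inequality, which is itself a nontrivial Jordan-algebraic fact (with several proofs in the literature, Fan-type variational formulas being one route, as you note). For step~(a) of item~(ii), your clustered perturbation sketch follows the same structure as Baes' argument, and you correctly flag the first-order control of the off-diagonal Peirce blocks as the technical crux; the paper sidesteps this by citing Baes directly. One minor remark: your alternative reduction to simple summands via merge-sort is valid but not needed, since Baes' Theorem~36 already covers general, not-necessarily-simple Euclidean Jordan algebras.
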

\begin{proof}
	\begin{enumerate}[label=$(\roman*)$]
			\item This was proved by Baes, see Corollary 24 in \cite{B07}.
			\item Baes showed that for every $i$, the function 
			$\lambda_i:\jAlg\to \Re$ that maps $x\in \jAlg$ to its $i$-th largest eigenvalue is directionally differentiable, see Theorem 36 in~\cite{B07}. 
			Therefore, all components of $\lambda$ are directionally differentiable, so  $\lambda$ must also be directionally differentiable. Then, it is a general fact that a Lipschitz continuous function that is directionally differentiable everywhere must also satisfy the limit above, see Lemma 2.1.1 and Remark 2.1.2 in \cite{HUL96}.
			%, Proposition 3.5 in \cite{Sh90}.
		\end{enumerate} 
\end{proof}

\subsection{Simultaneous diagonalization}\label{sec:diag}
Let $\jAlg$ be a Euclidean Jordan algebra of rank $r$. 
Given $x \in \jAlg$, we denote by $L_x:\jAlg \to \jAlg$ the \emph{Lyapunov operator associated to $x$}, which is the linear map satisfying
\[
L_x(z) = \jProd{x}{z}, \quad \forall z \in \jAlg.
\]
Given another element $y \in \jAlg$, we say that 
$x$ and $y$ \emph{operator commute} if
\[
L_xL_y = L_yL_x
\]
holds. 
It is known that $x$ and $y$ operator commute if and only if they share a common Jordan frame $\mathcal{J}$, see Lemma~X.2.2 in \cite{FK94}. This means that there are $r$ mutually orthogonal primitive idempotents $\jFr = [c_1,\cdots, c_r]$ such that $c_1+\cdots + c_r = \stdInt$ and 
\begin{align*}
x  = \sum _{i=1}^r a_i c_i, \qquad y  = \sum _{i=1}^r b_i c_i,
\end{align*} 
where the $a_i$ and $b_i$ are the eigenvalues of $x$ and $y$, respectively.
More generally if $\jFr$ is a Jordan frame for which $x \in \jAlg$ can be expressed as linear combination of the $c_i$, we say that 
\emph{$\jFr$ diagonalizes $x$}. Therefore, the existence of a common Jordan frame for $x$ and $y$ means that $x$ and $y$ are \emph{simultaneously diagonalizable}.

Here, the $a_i$ and $b_i$ that appear in the decomposition of $x$ and $y$ are not necessarily sorted in nondecreasing/nonincreasing order.
However, reordering the $c_i$, we may suppose that 
the $a_i$ are sorted in an nonincreasing order, i.e., 
$a_i = \lambda _i(x)$, for all $i$. With respect to this new ordering, we can write
\begin{align*}
x  = \sum _{i=1}^r \lambda_i(x) c_i, \qquad y = \sum _{i=1}^r \tilde {b}_ic_i,
\end{align*} 
where $[\tilde b_1,\ldots, \tilde b_r]$ is some permutation of 
$[b_1, \ldots, b_r]$. Because the idempotents in $\jFr$ are orthogonal amongst themselves,  we have for every $i$
\[
\inProd{c_i}{x} = \lambda_i(x), \qquad \inProd{c_i}{y} = \tilde b_i.
\]
With that in mind, we are going to introduce the function $\diag(\cdot,\jFr): \jAlg \to \Re^r$, which maps an element $z \in \jAlg$ to its ``diagonal'' with respect the Jordan frame $\jFr$. That is, we have
\[
\diag(z,\jFr) = (\inProd{c_1}{z},\ldots, \inProd{c_r}{z}), \quad \forall z \in \jAlg.
\]
If $\jFr$ is a frame that diagonalizes $z$, then $\diag(z,\jFr)$ is, in fact, the 
eigenvalue vector of $z$. Of course, $\diag(z,\jFr)$ might not be sorted in any particular way.
However, for the specific $x$ and $y$ we have discussed so far, 
we have
\[
\diag(x,\jFr) = \lambda(x), \qquad \diag(y,\jFr) = (\tilde b_1, \ldots, \tilde b_r).
\]

We are now going to introduce two more extra notations. We will 
denote by $\jFr(x,y)$ the set of common Jordan frames $\jFr$ for $x,y$ for which 
$\diag(x,\jFr) = \lambda(x)$. In other words,
not only $\jFr$ must be a common Jordan for $x$ and $y$, but it must also 
be such that the eigenvalues of $x$ appear in nonincreasing order.
Here, we emphasize that the eigenvalues of $y$ might appear in no particular order.
 By convention, if $x$ and $y$ do not operator commute, we will define $\jFr(x,y) = \emptyset$. 
We observe that since $L_{\alpha y + \beta z} = \alpha L_y +\beta L_z$, we have
\begin{equation}\label{eq:simultaneous}
\jFr(x,y) \neq \emptyset \text{ and } \jFr(x,z) \neq \emptyset \,\,\Rightarrow\,\, \jFr(x,\alpha y + \beta z ) \neq \emptyset,\, \forall \alpha, \beta \in \Re. 
\end{equation}
 
Furthermore, we will define $\jFr(x) \coloneqq \jFr(x,x)$. That is,
 $\jFr(x)$ is the set of Jordan frames of $x$ for which the eigenvalues of $x$ appear in 
 nonincreasing order. We have $\jFr(x,y) \subseteq \jFr(x)$ for every $x,y \in \jAlg$.

We also need a map that plays the opposite role of $\diag(\cdot, \jFr)$.
%That is, while $\diag(\cdot, \jFr)$ maps an element of the Jordan algebra to its ``diagonal vector'' according to the Jordan frame $\jFr$, 
Let $\Diag(\cdot,\jFr): \Re^r \to \jAlg$ be the map that takes a vector 
in $\Re^r$ and constructs a ``diagonal element'' in $\jAlg$ according to 
$\jFr$, i.e., 
\[
\Diag(u,\jFr) \coloneqq \sum _{i=1}^r u_i c_i. 
\]
We have $\diag(\Diag(u,\jFr),\jFr) = u$, for every $u \in \Re^r$.
We observe that, since $[c_1,\ldots, c_r]$ is a Jordan frame,  
the eigenvalues of $\Diag(u,\jFr)$ are precisely the $u_i$. 

\subsection{The directional derivative of the $i$-th largest eigenvalue}\label{sec:eig_dir}
In this section, we will describe an expression proved by Baes \cite{B07} to compute 
the directional derivative of the $i$-th largest eigenvalue. For that, 
we need to review the \emph{Peirce decomposition}, the properties of 
\emph{quadratic maps} in Euclidean Jordan algebras and, most regrettably, 
introduce more notation.

Let $c$ be an idempotent and $\alpha \in \Re$.
We define
\[
V(c,\alpha) \coloneqq \{x \in \jAlg \mid \jProd{c}{x} = \alpha x \}.
\]
Now, let $x \in \jAlg$ be an arbitrary element (not necessarily an idempotent), 
the \emph{quadratic map of $x$} is the linear map $Q_x: \jAlg \to \jAlg$ such that 
\[
Q_x(y) = 2\jProd{x}{(\jProd{x}{y})} -  \jProd{(\jProd{x}{x})}{y}.
\]
$Q_x$ is always self-adjoint.
With that, we have the following result.
\begin{theorem}[Peirce Decomposition, see Proposition IV.1.1 and page 64 in \cite{FK94}]\label{theo:peirce1}
	Let $(\jAlg, \jProd{}{})$ be an Euclidean Jordan algebra of rank $r$ and let
	$c \in \jAlg$ be an idempotent of rank $\ell$. Then $\jAlg$ is decomposed as the orthogonal direct sum
\[
	\jAlg = V(c,1) \bigoplus V\left(c,\frac{1}{2}\right) \bigoplus V(c,0).
\]
	In addition, $(V(c,1),\jProd{}{})$ and ${(V(c,0),\jProd{}{})}$ are  Euclidean Jordan algebras of rank $\ell$ and $r - \ell$, respectively.
	The orthogonal projections on $V(c,1)$ and $V(c,0)$ are given by 
	$Q_c$ and $Q_{\stdInt - c}$, respectively.
\end{theorem}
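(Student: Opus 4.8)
The statement to prove is the Peirce Decomposition theorem (Theorem~\ref{theo:peirce1}). This is a classical result, so my "proof" would really be a guided reconstruction of the standard argument found in Faraut–Korányi.

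\medskip

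\textbf{Plan.} The plan is to build everything from the structure of the Lyapunov operator $L_c$ associated to an idempotent $c$. The first step is to show that the only possible eigenvalues of $L_c$ are $0$, $\tfrac12$ and $1$. Since $\jAlg$ is Euclidean, property~(3) in the definition makes $L_c$ self-adjoint, so $\jAlg$ decomposes as an orthogonal direct sum of eigenspaces of $L_c$; I would then identify these eigenspaces with the $V(c,\alpha)$. To pin down the eigenvalues, I would use the Jordan identity (property~(2)) applied with $x = c$: since $c^2 = c$, property~(2) gives $L_c L_{c^2} = L_{c^2} L_c$, i.e.\ $L_c$ commutes with itself (trivially here), but the sharper route is the classical identity $2L_c^3 - 3L_c^2 + L_c = 0$ valid for any idempotent, which one derives by linearizing the Jordan axiom. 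This polynomial factors as $L_c(L_c - \tfrac12 I)(L_c - I) = 0$ (up to a scalar), forcing every eigenvalue of $L_c$ into $\{0,\tfrac12,1\}$ and giving the orthogonal direct sum $\jAlg = V(c,1)\oplus V(c,\tfrac12)\oplus V(c,0)$.

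\medskip

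\textbf{The subalgebra claims.} Next I would verify that $V(c,1)$ and $V(c,0)$ are subalgebras and are themselves Euclidean Jordan algebras. Using the multiplication rules among the Peirce spaces — namely $V(c,1)\circ V(c,1)\subseteq V(c,1)$, $V(c,0)\circ V(c,0)\subseteq V(c,0)$, and $V(c,1)\circ V(c,0) = \{0\}$ — which follow from expanding $\jProd{c}({\jProd{x}{y}})$ with the Jordan identity and the known eigenvalue of each factor, one sees $V(c,1)$ and $V(c,0)$ are closed under $\circ$. The identity element of $V(c,1)$ is $c$ itself (since $\jProd{c}{x} = x$ for $x \in V(c,1)$), and the identity of $V(c,0)$ is $e - c$ (note $e - c$ is also an idempotent and $V(e-c,1) = V(c,0)$). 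The inner product restricted to each is still positive definite and still satisfies the associativity property~(3), so both are Euclidean Jordan algebras. For the ranks: since $c$ is the identity of $V(c,1)$ and has rank $\ell$ as an element of $\jAlg$, its decomposition into $\ell$ primitive idempotents summing to $c$ realizes a Jordan frame inside $V(c,1)$, so $\mathrm{rank}\, V(c,1) = \ell$; dually $\mathrm{rank}\, V(c,0) = r - \ell$, and the two must add to $r$ since a Jordan frame of $\jAlg$ refining $c$ and $e-c$ splits accordingly.

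\medskip

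\textbf{The projection formulas.} Finally I would identify the orthogonal projections onto $V(c,1)$ and $V(c,0)$ with $Q_c$ and $Q_{e-c}$. Since $c$ is an idempotent, a direct computation of $Q_c$ in terms of $L_c$ gives $Q_c = 2L_c^2 - L_{c^2} = 2L_c^2 - L_c$. Evaluating this polynomial $p(t) = 2t^2 - t$ on the eigenvalues of $L_c$: $p(1) = 1$, $p(\tfrac12) = 0$, $p(0) = 0$. Hence $Q_c$ acts as the identity on $V(c,1)$ and as zero on $V(c,\tfrac12)\oplus V(c,0)$, i.e.\ $Q_c$ is precisely the orthogonal projection onto $V(c,1)$ (orthogonal because $Q_c$ is self-adjoint, as stated). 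Replacing $c$ by the idempotent $e - c$ and noting $V(e-c,1) = V(c,0)$ gives that $Q_{e-c}$ is the orthogonal projection onto $V(c,0)$.

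\medskip

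\textbf{Main obstacle.} The crux is establishing the cubic relation $2L_c^3 - 3L_c^2 + L_c = 0$ (equivalently that the minimal polynomial of $L_c$ divides $t(t-\tfrac12)(t-1)$) purely from the power-associativity encoded in axiom~(2); the linearization of the Jordan identity is the one genuinely nontrivial computation. Everything after that — the Peirce multiplication rules, the subalgebra and rank statements, and the projection identities — is bookkeeping with the eigenspace decomposition. I would therefore cite Faraut–Korányi for the linearization identity and focus the written proof on assembling the pieces, exactly as the theorem statement's reference (Proposition~IV.1.1 and p.~64 in \cite{FK94}) suggests.
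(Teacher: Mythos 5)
The paper states this as a background result cited from Faraut--Kor\'anyi (Proposition IV.1.1 and page 64) and does not prove it, so there is no in-paper proof to compare against. Your reconstruction is a correct rendition of the classical argument in the cited reference: the cubic annihilator $2L_c^3 - 3L_c^2 + L_c = 0$ obtained by linearizing axiom $(2)$ and applying the resulting operator identity to $c$, the orthogonal eigenspace decomposition from self-adjointness of $L_c$ via axiom $(3)$, the Peirce multiplication rules yielding the subalgebra and rank claims through the Jordan frame that refines $c$ and $e-c$, and the identification of $Q_c = 2L_c^2 - L_c$ as the orthogonal projection onto $V(c,1)$ by evaluating $p(t)=2t^2-t$ at the eigenvalues $\{0,\tfrac12,1\}$.
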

Next, we move on to the necessary notation. 
The eigenvalues of $x$ might be repeated so, for instance, it could be the 
case that $\lambda _3(x) = \lambda _4(x) = \lambda _5(x)$. The next
notation  corresponds to 
a way of assigning the indices $3,4,5$ to $1,2,3$.
That is, we need to map an index $i$ to its ``relative position'' with respect to the eigenvalues of $x$ that are equal to $\lambda _i(x)$.
Here, we will mostly follow the notation proposed by Baes in \cite{B07} and define 
for every $p \in \{1,\ldots, r\}$, the integer $l_p(x)\geq 1$ which is such that 
\[
\lambda_1(x) \geq \cdots \geq \lambda _{p - l_p(x)}(x) > \lambda _{p - l_p(x)+1}(x) = \cdots = \lambda _p(x) \geq \cdots \geq \lambda _r(x).
\]
Furthermore, if  $x  = \sum _{i=1}\lambda _i(x)c_i \in \jAlg$
we will denote by $e_p(x)$ the sum of the $c_i$ satisfying $\lambda _i(x) = \lambda _p(x)$, i.e., 
\[
e_p(x) = \sum _{i \text{ with } \lambda_i(x) = \lambda_p(x)} c_i.
\]
%We note that Proposition~\ref{prop:unique_sum} implies that $e_p(x)$ does not depend on the particular spectral decomposition of $x$. 
We  remark that  $\mathsf{f}'_p(x)$ was used instead of $e_p(x)$ in \cite{B07}. 
\begin{example}
Suppose that the rank of $\jAlg$ is $r = 7$ and the eigenvalues of $x  \in \jAlg$ are as follows.
\[
\lambda _1 > \lambda _2 = \lambda _3 = \lambda _4 > \lambda _5 = \lambda _ 6 > \lambda _7.
\]
Then $l_1 = l_7 = 1$, because $\lambda _1$ and $\lambda _7$ are unique eigenvalues. 
We have $l_2 = 1$, $l_3 = 2$ and $l_4 = 3$, since $\lambda _2,\lambda _3,\lambda_4$ are, respectively, the ``first'', ``second'' and ``third''  eigenvalues of a group of three equal eigenvalues.  Similarly,  
we have $l _5 = 1$ and $l_6 = 2$.

We have $e_1(x) = c_1$, $e_7(x) = c_7$,
\[
e_2(x) = e_3(x) = e_4(x) = c_2+c_3+c_4, \qquad e_5(x) = e_6(x) = c_5+c_6.
\]
\end{example} 
Finally, let $\jAlg' \subseteq \jAlg$ be an Euclidean Jordan algebra and let $x \in \jAlg'$. Then, the eigenvalues of $x$ as an element of $\jAlg'$ might be different from the eigenvalues of $x$ seen as an element of $\jAlg$. 
When it is necessary to make this distinction, we will denote the \emph{$i$-th eigenvalue of 
$x$ seen as element of $\jAlg'$} by 
\[
\lambda _i(x,\jAlg').
\]
The eigenvalue map of the algebra $\jAlg'$ will be similarly 
denoted by $\lambda(\cdot, \jAlg')$.
We have now all pieces in place to state the following 
theorem.
\begin{theorem}[Baes, Theorem 36 in \cite{B07}]\label{theo:dir}
Let $x,z \in \jAlg$ and consider the spectral decomposition of 
$x$:
\[
x = \sum _{i=1}^r \lambda _{i}(x) c_i.
\]
Then the directional derivative of the $i$-th largest eigenvalue of $x$ along 
the direction $z$ is given by
\[
\lambda_i'(x;z) %\coloneqq \lim _{t\to 0} \frac{\lambda(x+tz)-\lambda(x)}{t}  
= \lambda _{l_i(x)}(Q_{c}z, V(c,1)),
\]
where $c = e_i(x)$.
\end{theorem}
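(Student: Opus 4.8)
This statement is due to Baes; here is the route I would take. Write $x=\sum_{j=1}^r\lambda_j(x)c_j$, put $\mu:=\lambda_i(x)$, and let $\{p,p+1,\dots,q\}$ be the set of indices $j$ with $\lambda_j(x)=\mu$, so that $c:=e_i(x)=\sum_{j=p}^q c_j$ is a well-defined idempotent (Proposition~\ref{prop:unique_sum}) of rank $m=q-p+1$ and $l_i(x)=i-p+1$. Decompose $x=\mu c+w$ with $w:=\sum_{j\notin\{p,\dots,q\}}\lambda_j(x)c_j\in V(c,0)$ (each such $c_j$ is orthogonal to $c$), and split the direction by Peirce as $z=Q_cz+z_{1/2}+z_0$ with $Q_cz\in V(c,1)$ (Theorem~\ref{theo:peirce1}), $z_{1/2}\in V(c,1/2)$ and $z_0\in V(c,0)$. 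The target is the first-order expansion
\[
\lambda_j(x+tz)=\mu+t\,\lambda_{\,j-p+1}\!\left(Q_cz,\,V(c,1)\right)+o(t),\qquad j=p,\dots,q,
\]
since dividing by $t$, letting $t\downarrow 0$ and taking $j=i$ then gives the theorem.

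The crux is to annihilate the $V(c,1/2)$ part of the perturbation to first order via an automorphism of $\jAlg$. I would fix $u\in V(c,1/2)$, form the inner derivation $D:=4[L_c,L_u]$ (a genuine derivation, since $[L_a,L_b]\in\mathrm{Der}(\jAlg)$ for all $a,b$), and set $\phi_t:=\exp(tD)$, an automorphism with $\phi_0=\mathrm{id}$ and $\dot\phi_0=D$. The Peirce multiplication rules give $D(\mu c+w)=\left(2L_w-\mu\,\mathrm{id}\right)u\in V(c,1/2)$, and $2L_w-\mu\,\mathrm{id}$ maps $V(c,1/2)$ into itself and is invertible there: it is self-adjoint with eigenvalues on $V(c,1/2)$ equal to the numbers $\lambda_k(x)-\mu$, $k\notin\{p,\dots,q\}$, which are all nonzero because $\{p,\dots,q\}$ contains \emph{every} index with eigenvalue $\mu$. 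Choosing $u$ with $D(x)=-z_{1/2}$ and using that automorphisms preserve eigenvalues ($\lambda(x+tz)=\lambda(\phi_t(x+tz))$), we get
\[
\phi_t(x+tz)=x+t\left(z+D(x)\right)+o(t)=\left(\mu c+t\,Q_cz\right)+\left(w+t\,z_0\right)+o(t),
\]
because the $V(c,1/2)$ components cancel while the $V(c,1)$ and $V(c,0)$ components of $z+D(x)$ are $Q_cz$ and $z_0$.

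Put $\tilde y_t:=(\mu c+t\,Q_cz)+(w+t\,z_0)\in V(c,1)\oplus V(c,0)$; then $\norm{\lambda(x+tz)-\lambda(\tilde y_t)}=o(t)$ by Lemma~\ref{lem:eig}(i). Now $V(c,1)\oplus V(c,0)$ is a rank-$r$ Jordan subalgebra of $\jAlg$ containing $e$, and concatenating a Jordan frame of $V(c,1)$ with one of $V(c,0)$ yields a Jordan frame of $\jAlg$; hence the eigenvalues of $\tilde y_t$ are the union of those of $\mu c+t\,Q_cz$ computed in $V(c,1)$ — namely $\mu+t\,\lambda_l(Q_cz,V(c,1))$, $l=1,\dots,m$, by translation- and positive-scaling-covariance of the eigenvalue map — and those of $w+t\,z_0$ computed in $V(c,0)$, which stay uniformly away from $\mu$ for small $t$ (Lemma~\ref{lem:eig}(i), since $w$ has no eigenvalue $\mu$ in $V(c,0)$). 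Matching sorted positions, the $m$ eigenvalues near $\mu$ occupy exactly slots $p,\dots,q$, which gives the displayed expansion and therefore $\lambda_i'(x;z)=\lambda_{l_i(x)}(Q_cz,V(c,1))$ with $c=e_i(x)$. The main obstacle is the middle step: showing that $\phi_t$ truly removes the $V(c,1/2)$ component to first order, with the heart being the invertibility of $2L_w-\mu\,\mathrm{id}$ on $V(c,1/2)$; the remainder is bookkeeping with the Peirce calculus together with the $1$-Lipschitz continuity of $\lambda$.
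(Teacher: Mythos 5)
The paper does not prove this statement: it is quoted as Baes' Theorem~36 in \cite{B07}, so there is no internal proof to compare your argument against. Evaluating your attempt on its own merits, it is correct, and it is the natural Jordan-algebraic analog of the classical orthogonal-similarity reduction used for Hermitian matrix eigenvalue perturbation. The key steps all check out: $D=4[L_c,L_u]$ is indeed a derivation for any $u$, and a Peirce computation confirms $D(\mu c+w)=(2L_w-\mu\,\mathrm{id})u$; on the Peirce space $V(c,1/2)$, using the refined decomposition with respect to the full frame $\{c_1,\dots,c_r\}$, the operator $2L_w-\mu\,\mathrm{id}$ acts on each nonzero $V_{jk}$ (with $j\in\{p,\dots,q\}$, $k\notin\{p,\dots,q\}$) as multiplication by $\lambda_k(x)-\mu\neq 0$, so it is invertible and $u$ can be chosen so that $D(x)=-z_{1/2}$; the resulting estimate $\|\phi_t(x+tz)-\tilde y_t\|=O(t^2)$ combined with automorphism-invariance and the $1$-Lipschitz property of $\lambda$ gives $\|\lambda(x+tz)-\lambda(\tilde y_t)\|=O(t^2)$; and since primitive idempotents of $V(c,1)$ and $V(c,0)$ remain primitive in $\jAlg$, the eigenvalue multiset of $\tilde y_t$ splits into the $V(c,1)$-block and the $V(c,0)$-block, with the sorted-slot matching going through because $w$ has exactly $p-1$ eigenvalues above $\mu$ and $r-q$ below. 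One small point worth noting: the argument as written uses $t\downarrow 0$, which is indeed the correct one-sided convention for this directional derivative (the two-sided limit generally fails at multiple eigenvalues); the paper's Lemma~\ref{lem:eig}$(ii)$ writes $t\to 0$ rather loosely, but your usage matches Baes' original formulation.
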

From Theorem~\ref{theo:peirce1}, $Q_{c}z$ is the projection of $z$ in the algebra 
$V(c,1)$. Therefore, to compute $\lambda_i'(x;z)$ we need to project $z$ 
on $V(c,1)$, and then compute the $l_i(x)$-th eigenvalue of the projection with 
respect the algebra $V(c,1)$, where $l_i(x)$ is the ``relative position'' of 
the index $i$ with respect to the eigenvalues of $x$ that are 
equal to $\lambda_{i}(x)$.

\subsection{Spectral functions and sets}
Let $\jAlg$ be a Euclidean Jordan algebra of rank $r$ and let $f: \Re^r \to \ReIm$ be a function. We say that $f$ is a \emph{symmetric function} if 
$f(Pu) = f(u)$ holds for every $u \in \Re^r$ and every permutation matrix $P \in \Pe{r}$. 
Symmetric functions satisfy the following key relation between 
subdifferentials:
\begin{equation}\label{eq:sym_sub}
\msub f(Pu) = P \msub f(u), \quad \forall u \in \Re^r, \forall P \in \Pe{r}, %\tag{Symmetric-subdifferentials}
\end{equation} 
whenever $\msub$ is $\hat \partial, \partial$ or $\hsub$, e.g., Proposition~2 in \cite{Le99}.
We remark that \eqref{eq:sym_sub} will be used often in this paper.

We denote by $F:\jAlg \to \ReIm$ the \emph{spectral map induced by $f$}, which is the function defined as 
\[
F(x) \coloneqq f(\lambda(x)), \quad \forall x \in \jAlg.
\]
The function $F$ is well-defined, even if $f$ is not symmetric. However, if $f$ is indeed symmetric, many properties of $f$ are transferred to $F$.

There is also a notion of spectral set. We say that  $Q \subseteq \Re^r$ is a \emph{symmetric set} if $PQ = Q$ for every $P \in \Pe{r}$. Then the \emph{spectral set}
induced by $Q$ is defined as 
\[
\Omega \coloneqq \{x \in \jAlg \mid \lambda(x) \in Q \}.
\]
%Baes proved that  convexity, closedness, openness and boundedness are properties that are transferred from $Q$ to $\Omega$, see Theorem 27 in \cite{B07}. We observe that if $F$ is a spectral function then 
%$\dom{F}$ is a spectral set and that $Q$ is a spectral set if and only if 
%the indicator function of $Q$ is a spectral function.

To conclude this subsection, we now move on to the notion of \emph{weakly spectral} sets/maps, which was introduced by Gowda and Jeong in \cite{GJ17}. We say that a linear bijection $A:\jAlg\to \jAlg$ is a \emph{Jordan algebra automorphism} if
\[
\jProd{Ax}{Ay} = A(\jProd{x}{y}), \quad \forall\, x,y \in \jAlg.
\]
The group of Jordan algebra automorphisms is denoted by $\Aut \jAlg$.
Then, a function $F: \jAlg \to \ReIm$ is said to be \emph{weakly spectral} if 
\[
F(Ax) = F(x), \quad \forall x \in \jAlg, \forall A \in \Aut \jAlg.
\]
 A set $\Omega \subseteq \jAlg$ is said to be \emph{weakly spectral} if $A\Omega = \Omega$ holds for every $A\in \Aut \jAlg$.
 {A spectral map/set must also be weakly spectral}, 
 but the converse is not true in general, see remarks in Section 3 of \cite{GJ17}.
\section{Transfer principles for generalized subdifferentials}\label{sec:trans}
We start with a description of our setting and a few conventions.
Throughout Sections~\ref{sec:trans} and \ref{sec:kl}, $(\jAlg,\jProd{}{})$ denotes a Euclidean Jordan algebra of rank $r$,  
the inner product of two elements of $\jAlg$ is given by \eqref{eq:inner} and the norm is the one induced by $\eqref{eq:inner}$.
Although we are using the same symbol to denote the Euclidean inner product on $\Re^r$ and the trace inner product on $\jAlg$, there will be no confusion. The letters $x,y,z,s$ will always be reserved for elements of $\jAlg$ and $u,v,d$ for elements of $\Re^r$. 

Let $F: \jAlg \to \ReIm$ be a spectral function induced by some symmetric function $f: \Re^r \to \ReIm$. 
Our first goal is to prove the following meta-formula:
\begin{equation}
\msub  F(x) = \{ s \in \jAlg \mid \exists \jFr \in \jFr(x,s) \text { with } \diag(s,\jFr) \in \msub f(\lambda(x))
\}, \tag{Transfer} \label{eq:g1}
\end{equation}
where $\msub$ is either $\hat \partial, \partial$, or $\hsub$.
\begin{remark}
%We recall that  $\jFr(x,s) \neq \emptyset $ is equivalent to $x$ and $s$ sharing a 
%common Jordan frame, i.e., $x$ and $s$ must operator commute, see Section~\ref{sec:diag}. In particular, $x$ and $s$ in \eqref{eq:g1} must operator commute. 
%Also, 
For the sake of dispelling any possible confusion, $\msub f(\lambda(x))$ should be interpreted as 
$(\msub f)(\lambda(x))$, i.e., $\msub f(\lambda(x))$ is the generalized subdifferential of $f$ at $\lambda(x)$.
\end{remark}

%The general proof strategy is inspired on Lewis's previous work for symmetric matrices \cite{Le99}, however we need to adapt the intermediate results to the more general settings of Jordan algebras.
Proving \eqref{eq:g_gen}  will require several tools old and new,  such as commutation principles \cite{RSS13,GJ17}, majorization principles \cite{Go17} and the formulae for the directional derivatives of the eigenvalue functions \cite{B07}. 
\subsection{Commutation principles and generalized subdifferentials}\label{sec:com}
The first step towards \eqref{eq:g1} is proving that 
if $F$ is a spectral function and $s$ is any generalized subgradient of $x$, then $x$ and 
$s$ must operator commute. For that, we 
will use a commuting principle proved 
by  Ram\'irez, Seeger and Sossa \cite{RSS13}. 
\begin{theorem}[Ram\'irez, Seeger and Sossa\footnote{
%		In the paper by Ram\'irez, Seeger and Sossa, the theorem appears written in a different way. 
		Here, we are quoting the theorem as it appears in Gowda and Jeong's paper \cite{GJ17} (Theorem 1.1 therein), since it is more suited to our purposes.}  \cite{RSS13}]\label{theo:rss}
%	Let $\jAlg$ be an Euclidean Jordan algebra and 
	Suppose that $\Omega \subseteq \jAlg$ is a spectral set and $F:\jAlg \to \Re$ is a spectral function. Let $\Theta:\jAlg \to \Re$ be Fr\'echet differentiable. If $x^*$ is a local minimizer/maximizer of 
\[
	{x \in \Omega} \, \mapsto \, \Theta(x) + F(x)
\]
	then $x^*$ and $\nabla\Theta(x^*)$ operator commute\footnote{We recall that  $x^*$ is a local minimum if  there exists a neighbourhood $\mathcal{V}$ of $x^*$ such that $\Theta(x^*)+F(x^*) \leq \Theta(x) + F(x)$ holds for every $x \in \Omega \cap \mathcal{V}$.}.
\end{theorem}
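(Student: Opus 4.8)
The plan is to combine the automorphism‑invariance hidden inside the word ``spectral'' with a first‑order optimality condition along a carefully chosen curve, and then to convert the resulting scalar identity into operator commutativity by means of the Peirce calculus.

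First I would set $s \coloneqq \nabla\Theta(x^*)$ and use the classical fact (see \cite{FK94}) that for any $a,b \in \jAlg$ the commutator $[L_a,L_b] \coloneqq L_aL_b - L_bL_a$ is a derivation of $(\jAlg,\circ)$, so that $\exp(t[L_a,L_b]) \in \Aut\jAlg$ for every $t \in \Re$. Applying this with $a=s$ and $b=x^*$, set $D \coloneqq [L_s,L_{x^*}]$ and consider the smooth curve $\gamma(t) \coloneqq \exp(tD)\,x^*$. Since a spectral set is weakly spectral, $\gamma(t) \in \Omega$ for every $t$; since a spectral function is weakly spectral, $F(\gamma(t)) = F(x^*)$ for every $t$. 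As $\gamma$ is continuous with $\gamma(0) = x^*$, for $t$ near $0$ the point $\gamma(t)$ lies in the neighbourhood witnessing the local optimality of $x^*$, so $t \mapsto \Theta(\gamma(t)) + F(\gamma(t)) = \Theta(\gamma(t)) + F(x^*)$ has a local minimum or maximum at $t = 0$. Differentiating --- legitimate because $\Theta$ is Fr\'echet differentiable and $\gamma$ is smooth --- yields $\inProd{s}{\gamma'(0)} = \inProd{s}{Dx^*} = 0$.

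Next I would unwind this identity. Writing $Dx^* = [L_s,L_{x^*}]x^* = \jProd{s}{(x^*)^2} - \jProd{x^*}{(\jProd{s}{x^*})}$ and using the trace‑compatibility $\inProd{\jProd{p}{q}}{w} = \inProd{p}{\jProd{q}{w}}$ of the inner product on $\jAlg$, the relation $\inProd{s}{Dx^*} = 0$ becomes
\[
\inProd{s^2}{(x^*)^2} = \norm{\jProd{s}{x^*}}^2 .
\]
It then remains to prove that $\norm{\jProd{s}{x}}^2 \le \inProd{s^2}{x^2}$ for all $x,s \in \jAlg$, with equality if and only if $x$ and $s$ operator commute. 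I would establish this by fixing a Jordan frame $[c_1,\dots,c_r]$ diagonalizing $x = \sum_i \xi_i c_i$, Peirce‑decomposing $s = \sum_i s_{ii} + \sum_{i<j} s_{ij}$ relative to this frame (with $s_{ii} \in \Re c_i$ and $s_{ij} \in V(c_i,\tfrac12)\cap V(c_j,\tfrac12)$), and computing both sides from the Peirce multiplication rules; the computation collapses to
\[
\inProd{s^2}{x^2} - \norm{\jProd{s}{x}}^2 = \sum_{i<j}\frac{(\xi_i-\xi_j)^2}{4}\,\norm{s_{ij}}^2 \ge 0 .
\]
Equality forces $s_{ij} = 0$ whenever $\xi_i \ne \xi_j$; regrouping the $c_i$ by equal eigenvalues of $x$ and diagonalizing $s$ inside each Peirce subalgebra $V(c,1)$ then produces a Jordan frame common to $x$ and $s$, i.e.\ $x$ and $s$ operator commute. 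Specializing to $x = x^*$ and invoking the equality established above completes the argument.

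The part I expect to be the real work is this last step. Everything up to the identity $\inProd{s^2}{(x^*)^2} = \norm{\jProd{s}{x^*}}^2$ is a fairly mechanical use of automorphism‑invariance and the trace inner product, but the sharp inequality $\norm{\jProd{s}{x}}^2 \le \inProd{s^2}{x^2}$ together with its equality case genuinely needs the Peirce decomposition relative to a frame as above (or, alternatively, an appeal to Gowda's majorization inequalities). A secondary point that should be handled carefully, though it is standard Euclidean Jordan algebra theory, is the justification that $[L_s,L_{x^*}]$ is a derivation and hence integrates to a one‑parameter group of automorphisms of $\jAlg$; this must be cited explicitly since it is not developed in the preliminaries above.
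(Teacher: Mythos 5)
Your proof is correct, and since the paper only cites this result (from \cite{RSS13}, via Gowda and Jeong's restatement) without reproducing a proof, there is no in-paper argument to compare against. What you have written is in fact the argument behind the \emph{stronger} Theorem~\ref{theo:gj} of Gowda and Jeong: at no point do you use more than weak spectrality of $\Omega$ and $F$ (i.e.\ invariance under $\Aut\jAlg$), so the same proof covers the weakly spectral case. The chain --- $[L_s,L_{x^*}]$ is an inner derivation, hence $\exp(tD)\in\Aut\jAlg$, hence $\gamma(t)=\exp(tD)x^*$ stays in $\Omega$ with $F$ constant along it, hence $\inProd{s}{Dx^*}=0$, hence $\inProd{s^2}{(x^*)^2}=\norm{\jProd{s}{x^*}}^2$ --- is clean, and your Peirce computation giving
\[
\inProd{s^2}{x^2}-\norm{\jProd{s}{x}}^2=\sum_{i<j}\tfrac{(\xi_i-\xi_j)^2}{4}\norm{s_{ij}}^2
\]
is verified correct, with the equality case yielding $s\in\bigoplus_k V(e_k,1)$ and therefore a common Jordan frame for $x^*$ and $s$. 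Two small points worth making explicit in a writeup: (1) you should cite the fact that $[L_a,L_b]$ is a derivation and that derivations exponentiate to automorphisms (Faraut--Kor\'anyi, Ch.~II and VIII), since neither is stated in the paper's preliminaries; (2) when you ``diagonalize $s$ inside each $V(e_k,1)$'' you should note, as the paper does in the proof of Lemma~\ref{lem:conv}, that the union of Jordan frames of the orthogonal subalgebras $V(e_k,1)$ is a Jordan frame of $\jAlg$, which is what makes the common frame for $x^*$ and $s$ legitimate.
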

Recently, Gowda and Jeong showed that it is possible to weaken  the hypothesis of Theorem \ref{theo:rss} and consider
weakly spectral sets/functions instead \cite{GJ17}. %of spectral sets/functions \cite{GJ17}.  
\begin{theorem}[Gowda and Jeong \cite{GJ17}]\label{theo:gj}
	The conclusion of Theorem \ref{theo:rss} holds if $\Omega $ is a weakly spectral set and $F$ is a weakly spectral function.
\end{theorem}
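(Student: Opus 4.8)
The plan is to probe the constrained optimality of $x^*$ with the one-parameter subgroups of $\Aut\jAlg$, turning it into a family of scalar first-order conditions, and then to read off operator commutativity from a Jordan-algebraic computation of the orbit tangent space. Throughout, $\Theta:\jAlg\to\Re$ is the Fr\'echet-differentiable function of Theorem~\ref{theo:rss} and $x^*$ a local extremum of $x\in\Omega\mapsto\Theta(x)+F(x)$; the goal is to show that $\nabla\Theta(x^*)$ operator commutes with $x^*$. First I would recall that $\Aut\jAlg$ is a closed subgroup of $GL(\jAlg)$, hence a Lie group whose Lie algebra is the space $\mathrm{Der}(\jAlg)$ of derivations of $\jAlg$, i.e.\ the linear maps $D$ with $D(\jProd{x}{y})=\jProd{Dx}{y}+\jProd{x}{Dy}$, so that $t\mapsto\exp(tD)$ is a smooth curve in $\Aut\jAlg$ for each $D\in\mathrm{Der}(\jAlg)$ (see \cite{FK94}). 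Fixing such a $D$, weak spectrality of $\Omega$ gives $\exp(tD)x^*\in\exp(tD)\Omega=\Omega$ and weak spectrality of $F$ gives $F(\exp(tD)x^*)=F(x^*)$, for every $t$; since $\exp(tD)x^*$ stays in the prescribed neighbourhood of $x^*$ and lies in $\Omega$ for $|t|$ small, the function $t\mapsto\Theta(\exp(tD)x^*)=(\Theta+F)(\exp(tD)x^*)-F(x^*)$ has a local extremum at $t=0$. Differentiating and using Fr\'echet differentiability of $\Theta$ would give $\inProd{\nabla\Theta(x^*)}{Dx^*}=0$, and since $D$ is arbitrary,
\[
\nabla\Theta(x^*)\ \perp\ \mathrm{Der}(\jAlg)\,x^*\ \coloneqq\ \{\,Dx^* \mid D\in\mathrm{Der}(\jAlg)\,\}.
\]

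The main step would then be to prove the identity $\bigl(\mathrm{Der}(\jAlg)\,x^*\bigr)^{\perp}=\{\,s\in\jAlg\mid L_sL_{x^*}=L_{x^*}L_s\,\}$, after which the orthogonality above is literally the assertion that $\nabla\Theta(x^*)$ operator commutes with $x^*$. To prove the identity I would write $x^*=\sum_{j=1}^k\mu_j e_j$ where $\mu_1,\dots,\mu_k$ are the distinct eigenvalues of $x^*$ and $e_1,\dots,e_k$ the associated idempotents ($\jProd{e_i}{e_j}=0$ for $i\neq j$, $\sum_j e_j=\stdInt$), and invoke the Peirce decomposition relative to $e_1,\dots,e_k$ (the family version of Theorem~\ref{theo:peirce1}): an orthogonal direct sum $\jAlg=\bigoplus_{i\le j}V_{ij}$ with $V_{ii}=\{z\mid\jProd{e_i}{z}=z\}$ and, for $i<j$, $V_{ij}=\{z\mid\jProd{e_i}{z}=\jProd{e_j}{z}=\tfrac12 z,\ \jProd{e_l}{z}=0\text{ for }l\notin\{i,j\}\}$, together with the fact that the elements operator commuting with $x^*$ are exactly $\bigoplus_i V_{ii}$ (by Lemma~X.2.2 of \cite{FK94} and Proposition~\ref{prop:unique_sum}). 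For the inclusion ``$\subseteq$'' of the identity, note that if $s$ shares a Jordan frame $[c_1,\dots,c_r]$ with $x^*$ and $D$ is any derivation, then $Dc_l=D(\jProd{c_l}{c_l})=2\jProd{c_l}{Dc_l}$ places $Dc_l$ in $V(c_l,\tfrac12)$, which by the orthogonality in Theorem~\ref{theo:peirce1} is orthogonal to $c_l\in V(c_l,1)$ and to $c_m\in V(c_l,0)$ for $m\neq l$; writing $x^*=\sum_l a_l c_l$, $s=\sum_m b_m c_m$ then gives $\inProd{s}{Dx^*}=\sum_{l,m}a_l b_m\inProd{c_m}{Dc_l}=0$. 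For the reverse inclusion, it is enough by orthogonality of the Peirce pieces to exhibit, for each $i<j$ and each $a\in V_{ij}$, a derivation carrying $x^*$ to a nonzero multiple of $a$; I would take $D_a\coloneqq L_aL_{e_i}-L_{e_i}L_a$ — a derivation by the linearised Jordan identity — and verify through the Peirce multiplication rules ($\jProd{e_l}{e_m}=\delta_{lm}e_l$; $\jProd{a}{e_l}=\tfrac12 a$ for $l\in\{i,j\}$, $\jProd{a}{e_l}=0$ otherwise) that $D_a x^*=\tfrac14(\mu_i-\mu_j)\,a$, which is nonzero because $\mu_i\neq\mu_j$. This gives $\mathrm{Der}(\jAlg)\,x^*\supseteq\bigoplus_{i<j}V_{ij}$, and with ``$\subseteq$'' we conclude $\mathrm{Der}(\jAlg)\,x^*=\bigoplus_{i<j}V_{ij}=\bigl(\bigoplus_i V_{ii}\bigr)^{\perp}$, which is the identity.

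I expect the main obstacle to be exactly the computation of the orbit tangent space $\mathrm{Der}(\jAlg)\,x^*$, i.e.\ checking that the derivations already generate every off-diagonal Peirce component of $x^*$; this is where the Jordan-algebraic structure (Peirce calculus, and the fact that commutators of multiplication operators are derivations) is genuinely used, the passage from optimality to the orthogonality relation being routine. Two points are worth flagging. No simplicity hypothesis enters anywhere: the family Peirce decomposition and the relation $L_aL_b-L_bL_a\in\mathrm{Der}(\jAlg)$ hold in every Euclidean Jordan algebra, and for a direct product the idempotents $e_j$ of the distinct eigenvalues of $x^*$ are assembled blockwise, so the key computation $D_a x^*=\tfrac14(\mu_i-\mu_j)a$ is unchanged. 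And the local-maximiser case needs no separate treatment, since a differentiable function of $t\in\Re$ with a local extremum at $t=0$ has vanishing derivative there whether the extremum is a minimum or a maximum.
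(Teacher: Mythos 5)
The paper does not prove this statement; it is quoted directly from Gowda and Jeong~\cite{GJ17}, so there is no internal proof to compare against. That said, your argument is correct and is, as far as I can tell, the same strategy as the cited source: probe the constrained extremum along the one-parameter subgroups $t\mapsto\exp(tD)$ with $D\in\mathrm{Der}(\jAlg)$ (the Lie algebra of $\Aut\jAlg$), use weak spectrality of $\Omega$ and $F$ to reduce to a scalar extremum of $t\mapsto\Theta(\exp(tD)x^*)$, conclude $\nabla\Theta(x^*)\perp\mathrm{Der}(\jAlg)\,x^*$, and then identify $(\mathrm{Der}(\jAlg)\,x^*)^{\perp}$ with the operator-commutant of $x^*$ via the multi-idempotent Peirce decomposition and the inner derivations $[L_a,L_{e_i}]$. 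Your Peirce computation $D_a x^*=\tfrac14(\mu_i-\mu_j)a$ for $a\in V_{ij}$ is correct, the use of Lemma~X.2.2 of \cite{FK94} and Proposition~\ref{prop:unique_sum} to identify the commutant with $\bigoplus_i V_{ii}$ is sound, and your remark that none of this needs simplicity is accurate (both the family Peirce decomposition and the fact that $[L_a,L_b]\in\mathrm{Der}(\jAlg)$ hold in every Euclidean Jordan algebra). One cosmetic slip: the labels ``$\subseteq$'' and ``$\supseteq$'' for the two inclusions of your identity $(\mathrm{Der}(\jAlg)\,x^*)^{\perp}=\{s\mid L_sL_{x^*}=L_{x^*}L_s\}$ are swapped. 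The Jordan-frame-sharing argument (your ``$\subseteq$'') in fact proves that the commutant is contained in $(\mathrm{Der}(\jAlg)\,x^*)^{\perp}$, i.e.\ the ``$\supseteq$'' of the displayed identity, while the $D_a$ construction proves ``$\subseteq$''; moreover only ``$\subseteq$'' is needed to finish the theorem, so the other inclusion, while true, is optional.
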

Using the variational characterization of the regular subdifferential, we can prove the following new result, which 
is more general than what is strictly necessary for 
proving \eqref{eq:g1}, but we believe it is still useful.
\begin{proposition}[Operator commutativity for weakly spectral functions]\label{prop:p_com}
	%Let $\jAlg$ be an Euclidean Jordan algebra and 
	Let $\spec{f}:\jAlg \to \ReIm$ be a weakly spectral function.
	Suppose
\[
	s \in \msub \spec{f}(x),
\]
	where $\msub$ is either $\hat \partial, \partial$ or $\hsub$. Then, $x$ and $s$ operator commute. %in $\jAlg$.
\end{proposition}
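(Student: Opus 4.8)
The plan is to reduce each of the three cases (regular, approximate, horizon) to the differentiable case handled by Theorem~\ref{theo:gj}, using Proposition~\ref{prop:rw} as the bridge. First I would treat the regular subdifferential. Suppose $s \in \hat\partial F(x)$. By Proposition~\ref{prop:rw}, there is a neighborhood $U$ of $x$ and a $C^1$ function $h:U\to\Re$ with $h(x) = F(x)$, $\nabla h(x) = s$, and $h(v)\le F(v)$ for all $v\in U$. Set $\Theta \coloneqq -h$, so that $\Theta$ is Fr\'echet differentiable on $U$ and $\nabla\Theta(x) = -s$. Then for every $v\in U$ we have $\Theta(v) + F(v) = F(v) - h(v) \ge 0 = F(x) - h(x) = \Theta(x) + F(x)$, so $x$ is a local minimizer of $\Theta + F$ on $U$ (equivalently, on $\jAlg$ with $\Omega = \jAlg$, which is trivially weakly spectral). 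Since $F$ is weakly spectral by hypothesis, Theorem~\ref{theo:gj} applies and gives that $x$ and $\nabla\Theta(x) = -s$ operator commute; hence $L_xL_{-s} = L_{-s}L_x$, i.e.\ $L_xL_s = L_sL_x$, so $x$ and $s$ operator commute. (One should note that Theorem~\ref{theo:gj} is stated for $F:\jAlg\to\Re$, i.e.\ finite-valued; but the minimizer argument only involves values of $F$ on $U$, where $F(v)\ge h(v) > -\infty$, and $F(x)<+\infty$ since $x\in\dom F$, so one may restrict to a small enough $U$ where $F$ is finite, or invoke the standard observation that the commutation principle localizes. I would spell this out carefully since $F$ is extended-real-valued here.)

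For the approximate subdifferential, let $s\in\partial F(x)$. By definition there are sequences $x^k\to x$, $F(x^k)\to F(x)$, $s^k\to s$ with $s^k\in\hat\partial F(x^k)$ for each $k$. By the regular case just established, $x^k$ and $s^k$ operator commute for every $k$, i.e.\ $L_{x^k}L_{s^k} = L_{s^k}L_{x^k}$. Since $x\mapsto L_x$ is linear (hence continuous) and composition of operators is continuous, passing to the limit gives $L_xL_s = L_sL_x$, so $x$ and $s$ operator commute. The horizon case is identical in spirit: if $s\in\hsub F(x)$ there are sequences with $s^k\in\hat\partial F(x^k)$, $x^k\to x$, $F(x^k)\to F(x)$, $t^k\downarrow 0$ and $t^ks^k\to s$; each $x^k$ operator commutes with $s^k$, hence with $t^ks^k$ (scaling by $t^k$ preserves operator commutativity, since $L_{t^ks^k} = t^k L_{s^k}$), and taking limits yields $L_xL_s = L_sL_x$.

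The main obstacle, and the only point requiring genuine care, is the first step: justifying that the variational description of $\hat\partial F(x)$ via a $C^1$ minorant lets us invoke the commutation principle of Theorem~\ref{theo:gj}, which is formally stated for finite-valued spectral functions and global-type (neighborhood) minimizers. I would address this by observing that the conclusion of Theorem~\ref{theo:gj} is purely local — it only asserts operator commutativity of $x^*$ and $\nabla\Theta(x^*)$, and its proof (via directional derivatives of the eigenvalue map and automorphism invariance) only uses the behavior of $\Theta + F$ near $x^*$ — so it is legitimate to apply it with $\jAlg$ replaced by a small open ball on which $F$ is real-valued. If one prefers to avoid reinterpreting Theorem~\ref{theo:gj}, an alternative is to replace $F$ on a small ball by the finite weakly spectral function $\min\{F, h + \epsilon\}$ for suitable $\epsilon$, or simply to cite that $\dom F$ being all of a neighborhood of $x$ can be assumed without loss of generality after noting $F(v)\ge h(v)>-\infty$. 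Everything else — linearity and continuity of $x\mapsto L_x$, stability of operator commutativity under scalar multiples and limits — is routine.
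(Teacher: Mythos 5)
Your proposal follows essentially the same route as the paper: use Proposition~\ref{prop:rw} to produce a $C^1$ minorant $h$, apply the Gowda--Jeong commutation principle (Theorem~\ref{theo:gj}) to $\Theta=-h$ and $F$, and then pass to limits of operator compositions for the approximate and horizon cases. You correctly identify the one delicate point, namely that Theorem~\ref{theo:gj} is stated for finite-valued $F$ and globally defined $\Theta$, whereas here $F$ is extended-real-valued and $h$ lives only on a neighborhood of $x$.

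Two of the fixes you float for that delicate point do not survive scrutiny, though. First, the function $\min\{F,h+\epsilon\}$ is generally \emph{not} weakly spectral, since $h$ is an arbitrary $C^1$ minorant with no automorphism invariance, so this substitution cannot be fed into Theorem~\ref{theo:gj}. Second, ``restrict to a small enough $U$ where $F$ is finite'' cannot be taken for granted: $x$ may lie on the boundary of $\dom F$, so $F$ can take the value $+\infty$ arbitrarily close to $x$ (and taking $\Omega=\jAlg$ does not avoid this, because the hypothesis of Theorem~\ref{theo:gj} is that $F$ is real-valued, not just that $\Omega$ is weakly spectral). The paper resolves this in a footnote by (a) taking $\Omega=\dom F$, which is weakly spectral because weak spectrality of $F$ forces $\Aut\jAlg$-invariance of $\dom F$, (b) replacing $F$ by the globally finite weakly spectral function $\hat F$ equal to $F$ on $\dom F$ and to the constant $F(x)$ off $\dom F$, and (c) extending $H$ to all of $\jAlg$ with a smooth bump function so that it still has gradient $s$ at $x$. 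Your remaining suggestion, that the commutation principle is really a local statement, is the right intuition and is made precise by exactly this construction; but as written, your proof leans on alternatives that would not compile into a correct argument without the explicit modification of $F$ and $H$.
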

\begin{proof}
	First, we prove the result for the case $s \in \hat \partial \spec{f}(x)$.
	By Proposition \ref{prop:rw}, there exists a $C^1$ function $H$ 
	such that $H(x) = F(x)$, $\nabla H(x) = s$  and $H(y) \leq F(y)$ for all $y$ near $x$. 
	We invoke Theorem~\ref{theo:gj} using $F$, $\Theta = -H$ and $\Omega = \dom F$. By the properties of $H$, we have that 
	$x$ is a local minimum of $\Theta + F = F - H$.
	Therefore, $x$ commutes with $\nabla \Theta(x) = -s$, so it must commute with $s$ too. In reality, there are some minor technical details we have  overlooked, see the footnote\footnote{The functions 
	in Theorem~\ref{theo:gj} are finite functions defined 
everywhere, whereas $F$ is an extended value function and $H$ is defined only in a neighbourhood of $x$. To sidestep this, we define 
$\hat F$ such that $\hat F (y) = F(y)$ if $y \in \dom F$ and 
$\hat F(y) = F(x)$ if $y \not \in \dom F$. With that, $\hat F$ is still a 
weakly spectral function. Next we need to extend $H$ to a function 
defined over $\jAlg$ which coincides with $H$ in some neighbourhood of $x$. It is a classical fact that this can always be done and  here we show briefly why. Suppose that $H$ is defined over some open set $\mathcal{U}$.
Let 
$\mathcal{V} \subseteq \mathcal{U}$ be an open ball such that $\closure \mathcal{V} \subseteq \mathcal{U}$ and over which $x$ is a local minimizer of $F-H$.
Next, pick any function $\psi$ that is smooth and such that 
$\psi$ is $1$ on the compact set $\closure \mathcal{V}$ and $0$ outside $\mathcal{U}$. Then, we 
define $\hat H$ by letting $\hat H (y) = \psi(y)H(y)$ if $y \in \mathcal{U}$ and 
$\hat H (y) = 0$ if $y \not \in \mathcal{U}$. With that, we have that $\nabla \hat H(x) = s$ and $x$ is a local minimum of $\hat F - \hat H$ restricted to $\dom F$. 
Then, as before, we can invoke Theorem~\ref{theo:gj} with $\hat F$, $\Omega = \dom F$ and $\Theta = -\hat H$. 
} below.
	
	Next, suppose instead that $s \in \partial \spec{f}(x)$ or $s \in \hsub \spec{f}$. Then, there are sequences $\{x^k\}$, $\{s^k\}, \{t^k\}$ such that every $k$ satisfies $s^k \in \hat \partial \spec{f}(x^k)$ and the following limits hold.
\[
	x^k \to x, \qquad \spec{f}(x^k)\to \spec{f}(x), \qquad t^k s^k \to s.
\]
	Here, there are two cases for $\{t^k\}$. If 
	$s \in \partial \spec{f}(x)$, then $t^k = 1$ for every $k$. If $s \in \hsub \spec{f}(x)$, then $t^k \downarrow 0$.
	
	Either way, because $s^k \in \hat \partial \spec{f}(x^k)$, from what we have proved so far, 
	we have that $s^k$ and $x^k$ operator commute for every $k$. That is, 
\[
	L_{s^k}L_{x^k} = L_{x^k}L_{s^k}, \qquad \forall k.
\]
	By taking limits, we conclude that $L_{s}L_{x} = L_{x}L_{s}$ must also hold. Therefore, 
	$s$ and $x$ operator commute too.
\end{proof}

\subsection{The easy inclusion}
Next, we prove  the inclusion ``$\subseteq$'' in \eqref{eq:g1}, when $\msub = \hat \partial$. 
%We should remark that this is easy only because there is a remarkable amount of weightlifting hidden in 
%the commutation principles described in Theorems~\ref{theo:rss}, \ref{theo:gj}.%, which are quite powerful results. 
\begin{proposition}[The easy inclusion] \label{prop:easy}
	Let $\spec{f}:\jAlg \to \ReIm$ be the spectral function induced 
	by a symmetric function $f:\Re^r \to \ReIm$. 
Let $s \in \hat \partial F(x)$. Then, $x$ and $s$ operator commute and for any $\jFr \in \jFr(x,s)$ we have 
\[
\diag(s,\jFr)\in \hat \partial f(\lambda(x)).
\]
\end{proposition}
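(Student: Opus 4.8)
The plan is to prove the two claims separately. The first claim — that $x$ and $s$ operator commute — is immediate from Proposition~\ref{prop:p_com} applied with $\msub = \hat\partial$, since every spectral function is weakly spectral. So the real content is the second claim: given $\jFr \in \jFr(x,s)$, we must show $\diag(s,\jFr) \in \hat\partial f(\lambda(x))$. Write $d \coloneqq \diag(s,\jFr)$ and note that since $\jFr$ diagonalizes both $x$ and $s$, we have $\diag(x,\jFr) = \lambda(x)$ (by definition of $\jFr(x,s)$) and $x = \Diag(\lambda(x),\jFr)$, $s = \Diag(d,\jFr)$.

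The key idea is to transport the regular-subgradient inequality from $\jAlg$ back to $\Re^r$ using the map $\Diag(\cdot,\jFr)$, which is a linear isometry onto its image (the subalgebra spanned by the frame) because the $c_i$ are orthonormal. First I would fix $\epsilon > 0$; since $s \in \hat\partial F(x)$, by \eqref{eq:reg_sub2} there is $\delta > 0$ such that $\norm{z} \leq \delta$ implies $F(x+z) - F(x) - \inProd{s}{z} \geq -\epsilon\norm{z}$. Now take any $v \in \Re^r$ with $\norm{v} \leq \delta$ and set $z = \Diag(v,\jFr)$, so $\norm{z} = \norm{v} \leq \delta$. Then $x + z = \Diag(\lambda(x)+v,\jFr)$, whose eigenvalues are the entries of $\lambda(x)+v$ (in whatever order), hence $F(x+z) = f(\lambda(x+z)) = f(\lambda(x)+v)$ using symmetry of $f$ — the sorting of the coordinates is absorbed by a permutation, which $f$ ignores. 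Also $F(x) = f(\lambda(x))$ and $\inProd{s}{z} = \inProd{\Diag(d,\jFr)}{\Diag(v,\jFr)} = \inProd{d}{v}$ since $\Diag(\cdot,\jFr)$ preserves inner products on the coordinate directions. Substituting, $f(\lambda(x)+v) - f(\lambda(x)) - \inProd{d}{v} \geq -\epsilon\norm{v}$ for all $\norm{v} \leq \delta$, which is exactly \eqref{eq:reg_sub2} for $f$ at $\lambda(x)$ with subgradient $d$. Since $\epsilon$ was arbitrary, $d \in \hat\partial f(\lambda(x))$.

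\textbf{The main obstacle} I anticipate is the bookkeeping around eigenvalue ordering: when I write $F(x+z) = f(\lambda(x)+v)$, the vector $\lambda(x)+v$ need not lie in $\Re^r_\geq$, so it is not literally $\lambda(x+z)$ but rather a permutation of it; here is precisely where symmetry of $f$ is essential, and I would state this step carefully rather than gloss over it. A secondary subtlety is making sure $\Diag(\cdot,\jFr)$ is genuinely a linear isometry that preserves the relevant inner products — this follows from $\inProd{c_i}{c_j} = \delta_{ij}$ (the $c_i$ form an orthonormal set under the trace inner product, as recorded after \eqref{eq:inner}), so $\norm{\Diag(v,\jFr)}^2 = \sum_i v_i^2 = \norm{v}^2$ and $\inProd{\Diag(d,\jFr)}{\Diag(v,\jFr)} = \sum_i d_i v_i = \inProd{d}{v}$. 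Everything else is a direct substitution, and the operator-commutativity half is free from Proposition~\ref{prop:p_com}.
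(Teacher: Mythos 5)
Your proof is correct, and its core mechanism is the same as the paper's: transport the subgradient condition from $\jAlg$ to $\Re^r$ via the linear isometry $\Diag(\cdot,\jFr)$ associated with a common Jordan frame $\jFr \in \jFr(x,s)$, with the symmetry of $f$ absorbing the eigenvalue reordering. The one difference is tactical: the paper routes the transport through Proposition~\ref{prop:rw} (the $C^1$ smooth-minorant characterization of $\hat\partial$), pulling back the minorant $H$ to $h = H\circ\Diag(\cdot,\jFr)$ and applying the chain rule, whereas you work directly with the $\epsilon$-$\delta$ inequality \eqref{eq:reg_sub2}. Your route is a bit more elementary — it avoids the chain rule and any need to worry about the domain of $H$ — at the cost of slightly more explicit bookkeeping, which you have handled correctly (in particular, the use of $\norm{\Diag(v,\jFr)}=\norm{v}$, $\inProd{\Diag(d,\jFr)}{\Diag(v,\jFr)}=\inProd{d}{v}$, and $F(\Diag(\lambda(x)+v,\jFr))=f(\lambda(x)+v)$ via symmetry are all exactly the points that need care, and you flagged and justified them).
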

\begin{proof}
	Let $s \in \hat \partial \spec{f}(x)$.
	By Proposition \ref{prop:rw} there exists a neighborhood $\mathcal{U}$ of $x$ and a $C^1$
	function $H:\mathcal{U}\to \Re$ such that $H(y) \leq F(y)$ for all $y \in \mathcal{U}$ and 
	$H(x) = F(x)$, $\nabla H(x) = s$. In addition, by Proposition~\ref{prop:p_com},
	$s$ and $x$ operator commute. Therefore,   $\jFr(x,s)$ must be nonempty, i.e.,
	$x$ and $s$ have at least one common Jordan frame. 
	
	Let $\jFr \in \jFr(x,s)$ and consider the linear map $\Diag(\cdot,\jFr):\Re^r \to\jAlg$. Since $\Diag(\cdot,\jFr)$ is continuous, $V = \Diag(\cdot,\jFr)^{-1}(\mathcal{U})$ is 
	an open set of $\Re^r$ containing $\lambda(x)$. 
	Now, let $h: V\to \Re$ be such that 
\[
	h(v)\coloneqq H(\Diag(v,\jFr)).
\]
	Let $v \in V$. Using the symmetry of $f$ and the properties of $H$, we obtain
	\begin{align*}
	f(v) &=  f(\lambda(\Diag(v,\jFr) )) \\
	&\geq H(\Diag(v,\jFr)) \\
	&= h(v).
	\end{align*}
That is, $f(v) \geq h(v)$ holds for every $v \in V$. 
Also $h(\lambda(x)) = H(\Diag(\lambda(x),\jFr)) = H(x) = f(\lambda(x))$.
By the chain rule, we also have $\nabla h(\lambda(x)) = \diag(s,\jFr)$.
Therefore, by Proposition \ref{prop:rw}, we conclude 
that $\diag(s,\jFr) \in \hat \partial f(\lambda(x))$.
%	Furthermore, by definition, the following holds
%
%\[
%	\liminf _{\substack{y \to x\\ y \neq x}} \frac{f(\lambda(y)) - f(\lambda(x)) - \inProd{s}{y-x} }{\norm{y-x}} \geq 0.
%
%\]
%	Now, we let $J \in \mathcal{J}(x,s)$ be arbitrary. Our goal is to show that 
%
%\[
%	\liminf _{\substack{\mu \to \lambda(x)\\ f \neq \lambda(x)}} \frac{f(\mu) - f(\lambda(x)) - \inProd{\diag(s,\jFr)}{\mu-\lambda(x)} }{\norm{\mu-\lambda(x)}} \geq 0.
%
%\]
%	We observe the following
%	\begin{align*}
%	\norm{\mu - \lambda(x)} & = \norm{\Diag(\mu,\mathcal{J})-x}\\
%	\inProd{\diag(s,\jFr)}{\mu-\lambda(x)} & = 
%	\inProd{s}{ \Diag(\mu,\mathcal{J}) - x}. 
%	\end{align*}
%	Therefore, 
%	\begin{align*}
%	\liminf _{\substack{\mu \to \lambda(x)\\ \mu \neq \lambda(x)}} \frac{f(\mu) - f(\lambda(x)) - \inProd{\diag(s,\jFr)}{\mu-\lambda(x)} }{\norm{\mu-\lambda(x)}} & = \liminf _{\substack{\mu \to \lambda(x)\\ \mu \neq \lambda(x)}} \frac{f(\Diag(\mu,\mathcal{J})) - f(\lambda(x)) - \inProd{s}{\Diag(\mu,\mathcal{J})-x} }{\norm{\Diag(\mu,\mathcal{J})-x}} \\
%	& = \liminf _{\substack{\Diag(\mu,\mathcal{J}) \to x\\ \Diag(\mu,\mathcal{J}) \neq x}} \frac{f(\Diag(\mu,\mathcal{J})) - f(\lambda(x)) - \inProd{s}{\Diag(\mu,\mathcal{J})-x} }{\norm{\Diag(\mu,\mathcal{J})-x}}\\
%	& \geq \liminf _{y \to x, y \neq x} \frac{f(\lambda(y)) - f(\lambda(x)) - \inProd{s}{y-x} }{\norm{y-x}}\\
%	& \geq 0.
%	\end{align*}
\end{proof}

\subsection{The hard inclusion}\label{sec:hard}
The hard part of proving \eqref{eq:g1} is establishing 
the inclusion ``$\supseteq$'', when $\msub = \hat \partial$.
From Lewis' discussion in \cite{Le99}, it seems that one of the key steps for 
proving \eqref{eq:g1} in the case of symmetric matrices is 
a result relating the diagonal of a matrix $Z$ with the directional derivative 
$\lambda'(X;Z)$, see Theorem 5 in \cite{Le99}.
We will prove an analogous result by following an original approach 
making use of a recent {majorization} principle 
proved by Gowda in \cite{Go17}.

%Although, one could perhaps proceed by mimicking the development described in 
%Section 5 of \cite{Le99}, we will, instead, follow an original approach 
Let $u \in \Re^r$, we denote by $\Sd{u}$ the element in $\Re^r_{\geq}$  
corresponding to a reordering of the coordinates of $u$ in such a way 
that
\[
\Sd{u}_1 \geq \cdots \geq \Sd{u}_r.
\]
Now, let $v \in \Re^r$ be another element. Then, we say that  $u$ is  \emph{majorized} by $v$ and write $u \prec v$ if 
\[
\sum _{i=1}^k \Sd{u}_i \leq \sum _{i=1}^k \Sd{v}_i, \quad \forall k \in \{1,\ldots, r-1\}
\]
%holds for every $k \in \{1, \ldots, r-1\}$
and the sum of components of both $u$ and $v$ coincide, i.e.,  $u_1+\cdots + u_r = v_1+\cdots+v_r$.
It is a classical fact following from Birkhoff's theorem that $u $ is majorized by $v$ if and only if 
$v$ lies in the convex hull of all permutations of $v$, i.e., 
\[
u \in \conv \{Pv \mid P \in \mathcal{P}^r \},
\]
see Section B in Chapter 2 of \cite{OMA16}.
%
%\[
%\sum _{i=1}^r \Sd{u}_i = \sum _{i=1}^r \Sd{v}_i.
%
%\]
If $x, y \in \jAlg$ we say that \emph{$x$ is majorized by $y$} and write 
$x \prec y$ if 
$\lambda(x)$ is majorized by $\lambda(y)$. 
Whenever majorization principles are used, it is safer
to mention the standard disclaimers that, throughout the literature, there seems to be no consensus on the direction of the inequalities appearing in the definition of majorization. In some texts, ``$\geq$'' is used instead of ``$\leq$''. Here, we are following the convention in \cite{Go17}, which by its turn follows the notation in \cite{Bh97}.

%We recall that a matrix $A$ is said to be \emph{doubly stochastic} if 
%the entries of $A$ are nonnegative and if for every row and column the sum of its entries is $1$. Furthermore, Birkhof's theorem states that $A$ is doubly stochastic if and only if $A$ can be written as a convex combination of permutation matrices.
%It is a classical result that $u$ is majorized by $v$ if and only if there exists a 
%doubly stochastic matrix $A$ such that $u = Av$. In view of 
%Birkhof's theorem, 

Let $X$ be a symmetric matrix. It is known that the diagonal entries of 
$X$ are majorized by the eigenvalues of $X$.
Gowda recently extended this fact to Euclidean Jordan algebras.
%The relation between doubly stocasthic matrices and the notion of majorization is 
%that $u$ is majorized by $v$ if and only if there exists a doubly stochastic matrix 
%$A$ such that $u = Av$.  
\begin{proposition}[Gowda, Example 7 and Theorem 6 in \cite{Go17}]\label{prop:maj}
%Let $(\jAlg,\jProd{}{})$ be an Euclidean Jordan algebra, 
Let $\jFr$ be a 
Jordan frame and let $x \in \jAlg$.
Then, $\diag(x, \jFr)$ is majorized by $\lambda(x)$. 
In particular,
\[
\diag(x,\jFr) \in \conv \{P\lambda(x) \mid P \in \Pe{r} \}.
\]
\end{proposition}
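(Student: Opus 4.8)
The plan is to reduce the majorization $\diag(x,\jFr)\prec\lambda(x)$ to a Ky Fan--type variational description of the partial sums $\lambda_1(x)+\cdots+\lambda_k(x)$, and then read off the ``in particular'' part from the Birkhoff-type equivalence recalled just above the statement.

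\textbf{Step 1: a Fan maximum principle in $\jAlg$.} I would first prove that, for every $x\in\jAlg$ and every $k\in\{1,\dots,r\}$,
\[
\lambda_1(x)+\cdots+\lambda_k(x)=\max\bigl\{\,\inProd{x}{c}\;:\;c\in\jAlg\text{ is an idempotent with }\matRank c=k\,\bigr\}.
\]
To see ``$\geq$'', write $x=\sum_{i=1}^r\lambda_i(x)c_i'$ with $[c_1',\dots,c_r']\in\jFr(x)$, fix a rank-$k$ idempotent $c$, and set $\mu_i\coloneqq\inProd{c_i'}{c}$. Since the cone of squares of a Euclidean Jordan algebra is self-dual with respect to $\inProd{\cdot}{\cdot}$ and both $c_i'=(c_i')^2$ and $c=c^2$ lie in it, $\mu_i\geq 0$; since $\stdInt-c$ is also an idempotent, $\mu_i=1-\inProd{c_i'}{\stdInt-c}\leq 1$; and $\sum_{i=1}^r\mu_i=\inProd{\stdInt}{c}=\tr(c)=k$. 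Hence $\inProd{x}{c}=\sum_{i=1}^r\lambda_i(x)\mu_i\leq\sum_{i=1}^k\lambda_i(x)$, the last step being the elementary bound on a weighted average of a nonincreasing sequence with weights in $[0,1]$ summing to $k$. Equality holds at $c=c_1'+\cdots+c_k'$, which also proves ``$\leq$''.

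\textbf{Step 2: apply it to sub-sums of $\jFr$ and conclude.} Write $\jFr=[c_1,\dots,c_r]$ and $w\coloneqq\diag(x,\jFr)=(\inProd{c_1}{x},\dots,\inProd{c_r}{x})$. For any $S\subseteq\{1,\dots,r\}$ with $|S|=k$, the element $c_S\coloneqq\sum_{i\in S}c_i$ is an idempotent of rank $k$, because the $c_i$ are mutually orthogonal primitive idempotents (so $c_S^2=c_S$) and $\tr(c_S)=k$. Thus $\sum_{i\in S}w_i=\inProd{c_S}{x}\leq\lambda_1(x)+\cdots+\lambda_k(x)$ by Step 1; taking $S$ to index the $k$ largest entries of $w$ gives $\sum_{i=1}^k\Sd{w}_i\leq\sum_{i=1}^k\lambda_i(x)$ for $k=1,\dots,r-1$. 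For $k=r$ one has equality, $\sum_{i=1}^r w_i=\inProd{\sum_i c_i}{x}=\inProd{\stdInt}{x}=\tr(x)=\sum_{i=1}^r\lambda_i(x)$. Hence $w\prec\lambda(x)$, and by the Birkhoff-type equivalence quoted above this is exactly $w\in\conv\{P\lambda(x)\mid P\in\Pe{r}\}$.

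I expect Step 1 to be the only genuine difficulty: it is the single place where Jordan-algebraic structure enters, and it rests on the self-duality of the cone of squares (to obtain $\inProd{c_i'}{c}\geq 0$) together with $\tr(c)=\matRank c=\inProd{\stdInt}{c}$ for idempotents; everything afterwards is combinatorics identical to the symmetric-matrix case. An alternative is to bypass Step 1 and invoke Gowda's majorization theorem in \cite{Go17} directly, but the route above is self-contained using only tools already introduced in the paper.
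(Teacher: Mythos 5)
Your proof is correct, and it takes a genuinely different route from the paper. The paper proves this proposition by citing Gowda's majorization theorem (\cite{Go17}, Example~7 and Theorem~6) directly: it introduces the pinching map $\psi(x)\coloneqq\sum_i\inProd{c_i}{x}c_i$ and invokes Gowda's result $\psi(x)\prec x$, then observes that the components of $\diag(x,\jFr)$ are exactly the eigenvalues of $\psi(x)$. You instead give a self-contained argument: you first establish a Ky~Fan--type extremal principle $\lambda_1(x)+\cdots+\lambda_k(x)=\max\{\inProd{x}{c}:c\text{ idempotent},\,\matRank c=k\}$ from the self-duality of the cone of squares and the identity $\tr(c)=\matRank c$ for idempotents, and then derive the partial-sum inequalities by applying this to the rank-$k$ idempotents $c_S=\sum_{i\in S}c_i$ with $S$ indexing the $k$ largest entries of $\diag(x,\jFr)$, with the $k=r$ equality coming from $\tr(x)=\inProd{\stdInt}{x}$. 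Your route avoids the external dependence on \cite{Go17} at the cost of having to prove the Fan principle (the single place where Jordan-algebraic structure is used); the paper's route is shorter but opaque at this step. Both are valid, and your Step~1 is a known result in the Euclidean Jordan algebra literature, so either path is legitimate. One cosmetic remark: your argument uses $\tr(c_i')=1$ implicitly in bounding $\mu_i\leq 1$; it is worth stating this explicitly since it relies on $c_i'$ being \emph{primitive}, and the bound $\sum_i\lambda_i(x)\mu_i\leq\sum_{i=1}^k\lambda_i(x)$ for $\mu\in[0,1]^r$ with $\sum_i\mu_i=k$ deserves a one-line justification (it is the Hardy--Littlewood rearrangement bound for a doubly substochastic weight vector).
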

\begin{proof}
Consider the map $\psi:\jAlg \to \jAlg$ defined by 
\[
\psi(x) \coloneqq \sum _{i=1}^r \inProd{c_i}{x} c_i, \qquad \forall x \in \jAlg.
\]
In \cite{Go17}, the map $\psi$ is denoted by ``$\Diag$'' and it has 
a different meaning from the map $\Diag$ we are using in this paper. 
%With our notation, we have $\psi(x) =  \Diag(\diag(x,\jFr),\jFr)$.
In any case, in Example 7 and Theorem 6 in \cite{Go17}, Gowda showed 
that $\psi(x) \prec x$ holds for every $x \in \jAlg$.
Accordingly, we have
\[
\lambda(\psi(x) ) \prec \lambda(x).
\]
Now, we observe that the components of $\diag(x,\jFr)$ are precisely the eigenvalues of 
$\psi(x)$. Furthermore, 
the fact that a vector $u \in \Re^r$ is majorized by $v \in \Re^r$ does not change if 
we permute the entries of  $u$ or $v$. We 
conclude that $
\diag(x,\jFr) \prec \lambda(x)$ and that 
$
\diag(x,\jFr) \in \conv \{P\lambda(x) \mid P \in \Pe{r} \}.
$
\end{proof}
We are now able to prove an analogous of Theorem 4 of \cite{Le99} for Euclidean Jordan algebras.
\begin{theorem}[The diagonal map and directional derivatives of the eigenvalue map]\label{theo:diag_eig}
	Let $x,z \in \jAlg$ and let $\jFr \in \jFr(x)$. Then 
\[
	\diag(z, \jFr) \in \conv \{ P\lambda'(x;z) \mid P \in \Pe{r}(\lambda(x))  \}
\]
\end{theorem}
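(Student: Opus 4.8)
The plan is to exploit the structure of $\jFr \in \jFr(x)$ via the Peirce decomposition with respect to the idempotents attached to the distinct eigenvalues of $x$, and then to invoke Gowda's majorization principle (Proposition~\ref{prop:maj}) on each Peirce block. Write the distinct eigenvalues of $x$ as $\mu_1 > \cdots > \mu_m$ with multiplicities, and for each $j$ let $d_j = e_p(x)$ be the sum of the primitive idempotents $c_i$ in $\jFr$ for which $\lambda_i(x) = \mu_j$; these $d_j$ are mutually orthogonal idempotents summing to $\stdInt$, and by Proposition~\ref{prop:unique_sum} they do not depend on which frame in $\jFr(x)$ we chose. The Peirce decomposition of Theorem~\ref{theo:peirce1} applied simultaneously to all the $d_j$ decomposes $\jAlg$ into the algebras $V(d_j,1)$ (on which $Q_{d_j}$ is the orthogonal projection) plus off-diagonal pieces; crucially, each individual primitive idempotent $c_i$ associated with $\mu_j$ lies in $V(d_j,1)$, and $[c_i : \lambda_i(x)=\mu_j]$ is a Jordan frame for the subalgebra $V(d_j,1)$.

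\textbf{Key steps.} First I would express the directional derivative componentwise. By Theorem~\ref{theo:dir}, for an index $i$ with $\lambda_i(x) = \mu_j$ we have $\lambda_i'(x;z) = \lambda_{l_i(x)}(Q_{d_j}z, V(d_j,1))$; so as $i$ ranges over the block attached to $\mu_j$, the numbers $\lambda_i'(x;z)$ are exactly the eigenvalues $\lambda(Q_{d_j}z, V(d_j,1))$ of the projected element $Q_{d_j}z$ inside the algebra $V(d_j,1)$, listed in nonincreasing order. Second, I would compute $\diag(z,\jFr)$ block by block: for $i$ with $\lambda_i(x)=\mu_j$, $\inProd{c_i}{z} = \inProd{c_i}{Q_{d_j}z}$ (since $c_i \in V(d_j,1)$ and $Q_{d_j}$ is the orthogonal projection onto $V(d_j,1)$, and $c_i$ is orthogonal to the other Peirce pieces), i.e. the block of $\diag(z,\jFr)$ attached to $\mu_j$ equals $\diag(Q_{d_j}z, \jFr_j)$ where $\jFr_j$ is the induced Jordan frame of $V(d_j,1)$. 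Third, apply Proposition~\ref{prop:maj} inside each subalgebra $V(d_j,1)$: $\diag(Q_{d_j}z,\jFr_j)$ is majorized by $\lambda(Q_{d_j}z, V(d_j,1))$, hence lies in the convex hull of the permutations (within that block) of $\lambda(Q_{d_j}z, V(d_j,1))$, which by Step~1 is the convex hull of the block-permutations of the corresponding block of $\lambda'(x;z)$. Finally, I would assemble: permutations that act separately within each block of equal eigenvalues of $x$ are precisely the elements of $\Pe{r}(\lambda(x))$ (the stabilizer of $\lambda(x)$), and a convex combination of block-wise convex combinations is a convex combination overall, so $\diag(z,\jFr) \in \conv\{P\lambda'(x;z) \mid P \in \Pe{r}(\lambda(x))\}$.

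\textbf{Main obstacle.} The routine bookkeeping is in verifying the bridge between Steps~1 and~2 — namely that the ordering in which Theorem~\ref{theo:dir} delivers $\lambda_i'(x;z)$ within a block (indexed by the ``relative position'' $l_i(x)$) matches the ordering of $\diag(Q_{d_j}z,\jFr_j)$ coming from the chosen frame, so that the two live over the same block of indices and the same stabilizer-permutation acts on both. The genuinely substantive point, and the one I expect to be the crux, is the reduction to the subalgebras: one must argue carefully that $\jFr$ restricted to the indices of $\mu_j$ really is a Jordan frame of $V(d_j,1)$ (so Proposition~\ref{prop:maj} applies there) and that orthogonality across Peirce components makes $\inProd{c_i}{z}$ depend only on $Q_{d_j}z$. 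Once the algebra is correctly localized to the Peirce blocks, the majorization principle does the rest, and the passage from block-wise convex hulls to a single convex hull over $\Pe{r}(\lambda(x))$ is purely combinatorial.
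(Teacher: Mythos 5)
Your proposal is correct and follows essentially the same route as the paper's proof: split $\lambda(x)$ into blocks of equal eigenvalues, use Theorem~\ref{theo:dir} to identify the block of $\lambda'(x;z)$ with $\lambda(Q_{d_j}z, V(d_j,1))$, identify the corresponding block of $\diag(z,\jFr)$ with $\diag(Q_{d_j}z,\jFr_j)$ via self-adjointness of $Q_{d_j}$, invoke Gowda's majorization principle (Proposition~\ref{prop:maj}) on each Peirce subalgebra $V(d_j,1)$, and assemble the block-wise convex combinations into a single convex combination over block-diagonal permutations lying in $\Pe{r}(\lambda(x))$. The paper carries out the final assembly step explicitly via block-diagonal permutation matrices, but the argument is the same.
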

%Given the tools we have discussed so far and the formula in Theorem \ref{theo:dir}, 
%the only difficult part of proving Theorem \ref{theo:diag_eig} is managing the notation. 
%To avoid that the ideas drown in the sea of notation, 
First, we sketch the general proof strategy for Theorem~\ref{theo:diag_eig}. 
The idea is to separate the vector $\lambda(x)$ in blocks 
of equal eigenvalues and apply the formula in Theorem \ref{theo:dir} for each block.
Then, for each block, we associate a Euclidean Jordan algebra $\jAlg^j$ and 
invoke Proposition \ref{prop:maj}. Since Proposition \ref{prop:maj} is invoked in a blockwise fashion according to the blocks of equal eigenvalues of $x$, the resulting pieces can be glued together to obtain a convex combination of matrices in $\Pe{r}(\lambda(x))$.
\begin{proof}
To start, let us consider the spectral decomposition of $x$,
\[
x = \sum _{i=1}^r \lambda_i(x) c_i,
\]
where $\lambda _1(x) \geq \cdots \geq \lambda _r(x)$ and 
$\jFr = [c_1,\ldots, c_r]$ is a Jordan frame.
Now, we use the notation described in Section~\ref{sec:eig_dir} and denote 
by $l_i(x)$ the ``relative position'' of the index $i$ with respect 
the eigenvalues of $x$ that are equal to $\lambda _i(x)$.

Next, let $r_1,\ldots, r_\ell$ be such that 
\begin{multline*}
	\lambda _1(x) = \cdots = \lambda _{r_1}(x) > \lambda _{r_1+1}(x) = \cdots =
	\lambda _{r_2}(x) >\\ \lambda_{r_2+1}(x) =  \cdots = \lambda _{r_3}(x) >\cdots \lambda _{r_\ell}(x).
\end{multline*}
Here, $\ell$  is the number of \emph{distinct} eigenvalues of $x$. 
For convenience, we define $r_0 = 0$ and $n_j = r_{j} - r_{j-1}$ for 
$j \in \{1,\ldots, \ell\}$.
Then, we  divide $\diag(z,\jFr)$ in $\ell$ parts according to the blocks of equal eigenvalues of $x$:
\[
\diag(z,\jFr) = (u^1,\ldots, u^\ell)
\]
where 
\[
u^j  = (\inProd{z}{c_{r_{j-1}+1}},\ldots, \inProd{z}{c_{r_j}} ) \in \Re^{n_j}.
\]
We do the same for the map $\lambda$ and divide $\lambda$ in 
$\ell$ maps such that 
\begin{equation*}
\lambda(y) = (\lambda^1(y),\ldots, \lambda^\ell(y)), \quad \forall y \in \jAlg.
\end{equation*}
Here, each $\lambda^j$ is a map $\jAlg \to \Re^{n_j}$ such that 
\begin{equation*}
\lambda^j(y)  \coloneqq ({\lambda_{r_{j-1}+1}}(y),\ldots, {\lambda_{r_{j}}}(y) ) \in \Re^{n_j}.
\end{equation*}
Applying Theorem \ref{theo:dir} to each $\lambda^j$, we obtain
\begin{equation}\label{eq:eig_dir_aux}
(\lambda^j)'(x;z) = (\lambda _{l_{r_{j-1}+1}}(Q_{e_{r_j}}(z),\jAlg^j), \ldots,  \lambda _{l_{r_{j}}}(Q_{e_{r_j}}(z),\jAlg^j) ),
\end{equation}
where $e_{r_j}$ is the sum of the idempotents associated to 
the eigenvalues equal to $\lambda _{r_j}(x)$ and  $\jAlg^j$ is 
the Jordan algebra $V(e_{r_j},1)$ of rank $n_j$.

Let $z^j \coloneqq Q_{e_{r_j}}(z)$, for every $j$.
From Theorem~\ref{theo:peirce1}, $z^j$ is the orthogonal projection of $z$ onto $\jAlg^j$. The indices from $r_{j-1}+1$ to $r_{j}$ all correspond to equal eigenvalues of $x$. Therefore, from \eqref{eq:eig_dir_aux} and
the definition of the relative index $l_{r_{j-1}+k}$,  
 we conclude that  
\begin{equation}\label{eq:eig_aux}
(\lambda^j)'(x;z) =  (\lambda _{1}(z^j,\jAlg^j), \ldots,  \lambda _{n_j}(z^j,\jAlg^j) ) = \lambda(z^j, \jAlg^j),
\end{equation}
where we recall that $\lambda(\cdot,\jAlg^j)$ is the eigenvalue 
map of the algebra $\jAlg^j$.
Next, let $\jFr^j \coloneqq [c_{r_{j-1}+1},\ldots, c_{r_j}]$. Since 
$\jFr$ is a Jordan frame and the sum of the elements of $\jFr^j$ is $e_{r_j}$ (the identity element of $\jAlg^j$), we have that $\jFr^j$ is a Jordan frame in the algebra 
$\jAlg^j$. 
We will now prove 
that $\diag(z^j,\jFr^j) = u^j$. 
Let $k$ be an integer such that $r_{j-1}+1 \leq k \leq {r_j}$, 
we have
\begin{align*}
\inProd{z^j}{c_k} & = \inProd{Q_{e_{r_j}}z}{c_k} \\
& = \inProd{z}{Q_{e_{r_j}}c_k} \\
& = \inProd{z}{c_k},
\end{align*}
where the second equality follows from the fact $Q_{e_{r_j}}$ is self-adjoint and 
the third equality follows from the fact that $Q_{e_{r_j}}(c_k) = c_k$ since $e_{r_j}$ is the identity 
element in $\jAlg^j$ and $c_k$ is an idempotent contained in $\jAlg^j$.
Since this holds for every $k$ satisfying $r_{j-1}+1 \leq k \leq {r_j}$, 
we conclude that $\diag(z^j,\jFr^j) = u^j$.
From \eqref{eq:eig_aux} and Proposition~\ref{prop:maj} applied to $z^j,\jFr^j$ and $\jAlg^j$, we conclude that for 
every $j$, we have 
\[
\diag(z^j,\jFr^j) = u^j \in \conv \{P((\lambda^j)'(x;z)) \mid P \in \Pe{n_j} \}.
\]
That is, there are nonnegative coefficients $\alpha _{j,k} $ and 
$\kappa _j$ permutation matrices $P^{j,k} \in \Pe{n_{j}}$ such that 
\begin{equation}\label{eq:uj}
u^j = \sum _{k=1}^{\kappa_j} \alpha _{j,k}P^{j,k}((\lambda^j)'(x;z)) \qquad 
\sum _{k=1}^{\kappa _j} \alpha _{j,k} = 1.
\end{equation}
We are now almost done.
First, we define $A^j$ as the following $n_j \times n_j$ matrix
\begin{equation}\label{eq:Aj}
A^j \coloneqq \sum _{k=1}^{\kappa_j} \alpha _{j,k}P^{j,k}.
\end{equation}
Next, we define $A$ as the matrix satisfying 
\begin{equation}\label{eq:A}
A = \sum _{j_1=1}^{\kappa_1}\sum _{j_2=1}^{\kappa_2} \cdots 
\sum _{j_{\ell}=1}^{\kappa_{\ell}}\alpha_{1,j_1}\cdots \alpha _{\ell,j_{\ell}} \begin{pmatrix}
P^{1,j_1} & &\\
& \ddots   & \\
& & P^{\ell,j_{\ell}}
\end{pmatrix}.
\end{equation}
Because of \eqref{eq:Aj}, we have
\[
A = \begin{pmatrix}
A^1 & &\\
& \ddots   & \\
& & A^\ell 
\end{pmatrix},
\]
which together with \eqref{eq:uj} implies that 
\begin{equation}\label{eq:As}
\diag(z,\jFr) = (u^1,\ldots, u^\ell) = A\lambda'(x;z).
\end{equation}
Now, we consider an arbitrary matrix  $P$ appearing in \eqref{eq:A} which is of the form
\[
P = \begin{pmatrix}
P^{1,j_1} & &\\
& \ddots   & \\
& & P^{\ell,j_{\ell}}
\end{pmatrix}.
\]
$P$ is a block diagonal matrix and since each block is a permutation matrix, $P$ is 
a permutation matrix too. Furthermore, by construction, the block structure of $P$ follows the pattern of equal eigenvalues of $x$. So, for instance, 
$P^{1,j_1}$ has size $n_1 = r_1$, which corresponds to the first block of $r_1$ equal eigenvalues of $x$. For this reason, we obtain
\[
P\lambda(x) = (P^{1,j_1}\lambda^1(x),\ldots, P^{\ell,j_{\ell}}\lambda^\ell (x) ) = 
\lambda(x).
\]
Accordingly, $P$ belongs to $\Pe{r}(\lambda(x))$ and from \eqref{eq:A} and \eqref{eq:As}, we conclude that
\[
\diag(z, \jFr) \in \conv \{ P\lambda'(x;z) \mid P \in \Pe{r}(\lambda(x))  \}.
\]
\end{proof}
Next, we will prove the inclusion ``$\supseteq$'' in \eqref{eq:g1}, when $\msub = \hat \partial$. With all the preliminary results in place, we can proceed analogously to Theorem~5 of \cite{Le99}.
\begin{proposition}[The hard inclusion]\label{prop:hard}
	Let $\spec{f}:\jAlg \to \ReIm$ be the spectral function induced 
	by a symmetric function $f:\Re^r \to \ReIm$. Then
\[
\hat \partial F(x) \supseteq \{ s \in \jAlg \mid \exists \jFr \in \jFr(x,s) \text { with } \diag(s,\jFr) \in \hat \partial f(\lambda(x))
	\}.
\]
\end{proposition}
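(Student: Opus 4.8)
The plan is to verify directly the characterization~\eqref{eq:reg_sub2} of the regular subdifferential. Let $s$ belong to the right-hand side and fix $\jFr=[c_1,\ldots,c_r]\in\jFr(x,s)$ with $d\coloneqq\diag(s,\jFr)\in\hat\partial f(\lambda(x))$; in particular $\lambda(x)\in\dom f$, so $x\in\dom F$, and since $\jFr$ is a common Jordan frame of $x$ and $s$ we have $s=\sum_{i=1}^r d_ic_i$, whence $\inProd{s}{z}=\inProd{d}{\diag(z,\jFr)}$ for every $z\in\jAlg$ by orthogonality of the $c_i$. Before starting, I would perform one harmless reduction: permuting the idempotents of $\jFr$ inside a block of equal eigenvalues of $x$ yields another frame in $\jFr(x,s)$ whose diagonal of $s$ is $Pd$ for a permutation $P\in\Pe{r}(\lambda(x))$, and since $P$ fixes $\lambda(x)$, \eqref{eq:sym_sub} gives $Pd\in P\hat\partial f(\lambda(x))=\hat\partial f(\lambda(x))$. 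Hence I may and do assume that the entries of $d$ are arranged nonincreasingly within each block of equal eigenvalues of $x$.

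Next I would apply Proposition~\ref{prop:rw} to $f$ at $\lambda(x)$, obtaining a neighbourhood $V$ of $\lambda(x)$ and a $C^1$ function $h:V\to\Re$ with $h(\lambda(x))=f(\lambda(x))$, $\nabla h(\lambda(x))=d$, and $h\le f$ on $V$. Fix $\epsilon>0$. For $z$ small enough we have $\lambda(x+z)\in V$, and combining $F(x+z)=f(\lambda(x+z))\ge h(\lambda(x+z))$ with a first-order expansion of $h$ at $\lambda(x)$, the bound $\norm{\lambda(x+z)-\lambda(x)}\le\norm{z}$ of Lemma~\ref{lem:eig}(i), and the semidifferentiability $\lambda(x+z)-\lambda(x)=\lambda'(x;z)+o(\norm z)$ of Lemma~\ref{lem:eig}(ii), I obtain
\[
F(x+z)-F(x)\;\ge\;\inProd{d}{\lambda'(x;z)}-\epsilon\norm{z}
\]
for all sufficiently small $z$ (the case $F(x+z)=+\infty$ is trivial, and $F(x+z)=-\infty$ cannot occur because $h$ is finite and $h\le f$ on $V$). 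Together with $\inProd{s}{z}=\inProd{d}{\diag(z,\jFr)}$, this reduces the whole statement to the finite-dimensional inequality
\[
\inProd{d}{\diag(z,\jFr)}\;\le\;\inProd{d}{\lambda'(x;z)},\qquad\forall\,z\in\jAlg;
\]
plugging it into the previous display yields $F(x+z)-F(x)-\inProd{s}{z}\ge-\epsilon\norm z$ for all small $z$, which is precisely~\eqref{eq:reg_sub2}, so $s\in\hat\partial F(x)$.

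To establish that inequality I would use Theorem~\ref{theo:diag_eig}, which gives $\diag(z,\jFr)\in\conv\{P\lambda'(x;z)\mid P\in\Pe{r}(\lambda(x))\}$; hence it suffices to prove $\inProd{d}{P\lambda'(x;z)}\le\inProd{d}{\lambda'(x;z)}$ for each $P\in\Pe{r}(\lambda(x))$. Such a $P$ only permutes coordinates within blocks of equal eigenvalues of $x$. Within each such block $d$ is nonincreasing by the reduction above, and $\lambda'(x;z)$ is nonincreasing there as well — this follows by letting $t\downarrow 0$ in $\lambda_i(x+tz)\ge\lambda_{i+1}(x+tz)$ whenever $\lambda_i(x)=\lambda_{i+1}(x)$, and is also visible from~\eqref{eq:eig_aux}. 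Applying the classical rearrangement inequality blockwise then gives $\inProd{d}{P\lambda'(x;z)}\le\inProd{d}{\lambda'(x;z)}$, which finishes the proof.

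The main obstacle is organizational rather than conceptual: one must carefully handle the two $o(\norm z)$ error terms — one from the $C^1$ expansion of $h$, controlled through $\norm{\lambda(x+z)-\lambda(x)}\le\norm z$, and one from Lemma~\ref{lem:eig}(ii) — absorbing both into a single $\epsilon\norm z$, while keeping track of the extended-valued cases. It is instructive to note why the slicker-looking candidate $H\coloneqq h\circ\diag(\cdot,\jFr)$ fails: this $H$ is genuinely $C^1$ with $H(x)=F(x)$ and $\nabla H(x)=s$, but forcing $H\le F$ near $x$ would require $f(\diag(y,\jFr))\le f(\lambda(y))$, i.e.\ Schur-convexity of $f$, which is unavailable since $f$ need not be convex. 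This is exactly why the first-order argument above is needed, and why the within-block sorting reduction together with Theorem~\ref{theo:diag_eig} (ultimately Gowda's majorization principle, Proposition~\ref{prop:maj}) takes the place of convexity.
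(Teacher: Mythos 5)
Your proof is correct, but it takes a genuinely different route from the paper's. Both arguments hinge on Theorem~\ref{theo:diag_eig} (hence on Gowda's majorization principle), but they use it differently and they translate the hypothesis $\diag(s,\jFr)\in\hat\partial f(\lambda(x))$ into an estimate on $f$ by different mechanisms. The paper never fixes a sorting of $d$: it introduces the whole orbit $\{P\diag(s,\jFr)\mid P\in\Pe{r}(\lambda(x))\}$, applies the $\epsilon$-$\delta$ form \eqref{eq:reg_sub2} of the regular subgradient to each of these finitely many vectors (taking a minimum over the $\delta_P$), packages the resulting lower bound as the support function $\delta^*_\Lambda$ of $\Lambda=\conv\{P\diag(s,\jFr)\}$, and then uses Lipschitz continuity, sublinearity and convexity of $\delta^*_\Lambda$ together with Theorem~\ref{theo:diag_eig} to pass from $\lambda'(x;z)$ to $\diag(z,\jFr)$. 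You instead normalize $d$ once and for all to be nonincreasing within each block of equal eigenvalues (a harmless permutation in $\Pe{r}(\lambda(x))$, as you correctly justify via \eqref{eq:sym_sub}), produce a single $C^1$ minorant $h$ of $f$ via Proposition~\ref{prop:rw}, expand $h$ to first order along $\lambda(x+z)-\lambda(x)$, and then replace the support-function machinery by the classical rearrangement inequality applied blockwise, using that both $d$ (after the normalization) and $\lambda'(x;z)$ (by \eqref{eq:eig_aux}, or by passing to the limit in $\lambda_i(x+tz)\ge\lambda_{i+1}(x+tz)$) are nonincreasing within blocks. Your version is somewhat more elementary — a single smooth minorant instead of a finite family of $\epsilon$-$\delta$ conditions, and the rearrangement inequality instead of support functions — and it is a pleasing dual to the proof of the easy inclusion (Proposition~\ref{prop:easy}), which also goes through Proposition~\ref{prop:rw}; the paper's version avoids the WLOG sorting step and gives the slightly sharper intermediate estimate $f(\lambda(x+z))\ge f(\lambda(x))+\delta^*_\Lambda(\lambda'(x;z))-\epsilon(1+\kappa)\norm z$, though that extra sharpness is not needed. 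Your closing remark about why $H=h\circ\diag(\cdot,\jFr)$ cannot serve as a $C^1$ minorant of $F$ (it would require Schur-convexity of $f$) is accurate and explains well why the detour through $\lambda'(x;z)$ and Theorem~\ref{theo:diag_eig} is unavoidable.
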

\begin{proof}
Let $s \in \jAlg$ and $\jFr \in \jFr(x,s)$ be such that  $\diag(s,\jFr) \in \hat \partial f(\lambda(x))$. Our goal is to show that $s \in \hat  \partial F(x).$ In view of \eqref{eq:reg_sub2},  $s \in \hat  \partial F(x)$ will be 
established if we show that for every $\epsilon > 0$, there exists $\delta$ such that $\norm{z} \leq \delta$ implies 
\[
f(\lambda(x+z)) \geq f(\lambda(x)) + \inProd{s}{z} - \epsilon \norm{z}.
\]
However, since $\jFr$ diagonalizes $s$, we have 
\[
\inProd{s}{z} = \inProd{\sum _{i=1}^r \inProd{s}{c_i}c_i}{z} = {\sum _{i=1}^r \inProd{s}{c_i}\inProd{c_i}{z}} = \inProd{\diag(s,\jFr)}{\diag(z,\jFr)}. 
\]
Therefore, our goal is to show that for every $\epsilon > 0$, there exists $\delta$ such that $\norm{z} \leq \delta$ implies 
\begin{equation}
f(\lambda(x+z)) \geq f(\lambda(x)) + \inProd{\diag(s,\jFr)}{\diag(z,\jFr)} - \epsilon \norm{z}. \tag{Goal} \label{eq:goal}
\end{equation}
Now, we will set up a few objects that will help us towards proving \eqref{eq:goal}.
First, we observe that $\diag(s,\jFr) \in \hat \partial f(\lambda(x))$ and 
\eqref{eq:sym_sub} implies that %$P \in \Pe{r}(\lambda(x))$ implies $P\lambda(x) = \lambda(x)$, we have that 
\[
P\diag(s,\jFr) \in \hat \partial f(\lambda(x)),\quad \forall P \in \Pe{r}(\lambda(x)).
\]
Next, we define $\Lambda$ to be the convex hull of the $P\diag(s,\jFr)$ with $ P \in \Pe{r}(\lambda(x))$ and denote by $\delta^*_{\Lambda}$ the corresponding support 
function. Since $\Lambda$ is generated by a finite number of elements, we have 
\[
\delta^*_{\Lambda}(v) = \sup\{\inProd{v}{\hat v} \mid \hat v \in \Lambda \} =  \max \{\inProd{P\diag(s,\jFr)}{v} \mid P \in \Pe{r}(\lambda(x)) \}.
\]
Now that the pieces are in place, we move on to proving \eqref{eq:goal}.
Let $\epsilon > 0$. From the definition of regular subgradients (see \eqref{eq:reg_sub}) and from \eqref{eq:reg_sub2}, for every $ P \in \Pe{r}(\lambda(x))$, there exists $\delta_P$ such that $\norm{v}\leq \delta _P$ implies 
\[
f(\lambda(x)+v) \geq f(\lambda(x)) + \inProd{P\diag(s,\jFr)}{v} -\epsilon\norm{v}.
\]
In particular, if we let $\hat \delta = \min _{P  \in \Pe{r}(\lambda(x))}\delta_P$, we 
conclude that
\begin{align}
f(\lambda(x)+v) &\geq \max \{ f(\lambda(x)) + \inProd{P\diag(s,\jFr)}{v} -\epsilon\norm{v}\mid P  \in \Pe{r}(\lambda(x))  \} \notag\\
&= f(\lambda(x)) +\delta^*_{\Lambda}(v) -\epsilon\norm{v}, \label{eq:in_sup}
\end{align}
whenever $\norm{v} \leq \hat \delta$.
From item $(ii)$ of Lemma \ref{lem:eig} and decreasing $\hat \delta$ if necessary, we have  that if 
$z \in \jAlg$ satisfies $\norm{z} \leq \hat \delta$, it holds that
\begin{equation}\label{eq:eig_dir}
 \norm{\lambda(x+z)-\lambda(x) - \lambda'(x;z) } \leq \epsilon{\norm{z}}.
\end{equation}
By item $(i)$ of Lemma \ref{lem:eig}, $\norm{\lambda(x + z)-\lambda(x)} \leq \norm{z}$. Therefore, if $z$ satisfies $\norm{z} \leq \hat \delta$, 
we obtain from \eqref{eq:in_sup} that
\begin{align}
f(\lambda(x + z)) & = f(\lambda(x) + (\lambda(x + z)-\lambda(x))) \notag\\
& \geq f(\lambda(x)) - \epsilon 
\norm{z} + \delta^*_{\Lambda}(\lambda(x + z)-\lambda(x)).  \label{eq:aux}
\end{align}
Since $\delta^*_{\Lambda}$ is the pointwise maximum of linear functions, 
$\delta^*_{\Lambda}$ is a Lipschitz continuous  sublinear function with Lipschitz constant $\kappa$  given by
\[
\kappa = \max _{P \in \Pe{r}(\lambda(x))} \norm{P\diag(s,\jFr)} = \norm{\diag(s,\jFr)}.
\]
Therefore,  for every $u,v \in \Re^r$, we have 
\begin{align}
\delta^*_{\Lambda}(u+v) & \geq \delta^*_{\Lambda}(u) - \delta^*_{\Lambda}(-v) \notag \\
&\geq \delta^*_{\Lambda}(u) - \kappa\norm{v}. \label{eq:aux2}
\end{align} 
Now, we let $u = \lambda'(x;z)$ and $v = \lambda(x+z) - \lambda(x) - \lambda'(x;z)$ in 
\eqref{eq:aux2} and use the resulting inequality back in \eqref{eq:aux}, to obtain 
\begin{align}
f(\lambda(x + z)) & \geq f(\lambda(x)) + \delta^*_{\Lambda}(\lambda'(x;z)) - \epsilon \norm{z} - \kappa\norm{\lambda(x+z) - \lambda(x) - \lambda'(x;z)}\notag \\
& \geq f(\lambda(x)) + \delta^*_{\Lambda}(\lambda'(x;z)) - (1+\kappa)\epsilon\norm{z}
\label{eq:aux3},
\end{align}
where the last inequality follows from \eqref{eq:eig_dir}.

By Theorem \ref{theo:diag_eig}, we have
\[
\diag(z,\jFr) \in \conv\{P \lambda'(x;z) \mid P \in \Pe{r}(\lambda(x)) \}.
\]
Therefore, there are nonnegative numbers $\alpha_1,\ldots,\alpha_{\ell}$ such that their sum is $1$ and 
\[
\diag(z,\jFr) = \sum _{i=1}^\ell \alpha_i P_i \lambda'(x;z),
\]
where each $P_i$ belongs to $\Pe{r}(\lambda(x))$. We recall that, by definition, $\delta^*_{\Lambda}(Pu) = \delta^*_{\Lambda}(u) $ for every $P\in \Pe{r}(\lambda(x))$ and $u \in \Re^r$.
Using the convexity of $\delta^*_{\Lambda}$, we obtain
\begin{align}
\delta^*_{\Lambda}(\diag(z,\jFr)) & \leq \sum _{i=1}^\ell \alpha_i \delta^*_{\Lambda}(P_i \lambda'(x;z)) \notag \\
& =  \sum _{i=1}^\ell \alpha_i \delta^*_{\Lambda}(\lambda'(x;z)) \notag \\
& = \delta^*_{\Lambda}(\lambda'(x;z)), \label{eq:aux4}
\end{align}
Using inequality \eqref{eq:aux4} in \eqref{eq:aux3}, we obtain that for every 
$z \in \jAlg$ with $\norm{z} \leq \hat \delta$, we have 
\begin{align}
f(\lambda(x + z)) & \geq  f(\lambda(x)) + \delta^*_{\Lambda}(\lambda'(x;z)) - (1+\kappa)\epsilon\norm{z} \notag \\
&\geq f(\lambda(x)) + \delta^*_{\Lambda}(\diag(z,\jFr)) - (1+\kappa)\epsilon\norm{z} \notag\\
&\geq f(\lambda(x)) + \inProd{\diag(s,\jFr)}{ \diag(z,\jFr)} - (1+\kappa)\epsilon\norm{z}. \notag
\end{align}
Since $\epsilon$ was arbitrary, this shows that \eqref{eq:goal} holds. 
\end{proof}

%\begin{lemma}\label{lem:ord}
%Let $\{u^k\} \subseteq \Re^r$ be a sequence converging to some 
%$\hat u \in \Re^r_{\geq}$.
%Then, the sequence $\{\Sd{(u^k)}\}$ converges to the same 
%$\hat u$.
%\end{lemma}
%\begin{proof}
%We let $(\jAlg, \jProd{}{})$ be the Jordan algebra $(\Re^r,\jProd{}{})$, where 
%$\jProd{}{}$ is the pointwise product, i.e., 
%
%\[
%\jProd{u}{v} = (u_1v_1,\ldots, u_{r}v_r),
%
%\]
%for every $u,v \in \Re^r$. 
%With this, the eigenvalues of $u \in \Re^r$ are precisely the components of 
%$u$. Therefore, $\Sd{u} = \lambda(u)$, for every $u \in \jAlg$. 
%Accordingly, the lemma follows from the fact that the eigenvalue map is continuous (e.g., item $(i)$ in Lemma~\ref{lem:eig}).

%	
%For every $k$, let $P^k$ be the permutation matrix 
%such that $P^k (\Sd{(u^k)}) = u^k$.
%We have
%\begin{align*}
%\norm{u - \Sd{(u^k)}}^2 & =  \norm{u}^2 - 2\inProd{u}{P^k\Sd{(u^k)}} + \norm{\Sd{(u^k)}}^2\\ 
%& \leq \norm{u}^2 - 2\inProd{u}{{(u^k)}} + \norm{u^k}^2\\
%& = \norm{u - u^k}^2.
%\end{align*}
%Therefore, as $u^k \to u$, it must be the case that $\Sd{(u^k)}$ goes to $u$ as well.
%\end{proof}

\subsection{Main results}
%In this subsection, we will prove the meta-formula \eqref{eq:g1}. 
From Propositions~\ref{prop:easy} and \ref{prop:hard}, we conclude
that \eqref{eq:g1} holds for the case $\msub = \hat \partial$. Next, will prove transfer results for the approximate and horizon subdifferentials which will conclude the proof of \eqref{eq:g1}.

%\begin{proposition}[The regular subdifferential of spectral functions]\label{prop:reg_sub}
%Let $(\jAlg, \jProd{}{})$ be an Euclidean Jordan algebra and let $F:\jAlg \to \Re$ be the spectral function induced by a symmetric map $f: \Re^r \to \Re$. Then, for $x \in \jAlg$, we have
%\begin{align}
%\hat \partial F(x) &= \{ s \mid \diag(s,\jFr) \in \hat \partial f(\lambda(x)),\quad  \jFr \in \jFr(x,s)
%\}{\label{eq:g_reg}}
%\end{align}
%\end{proposition}
%\begin{proof}
%The equality in \eqref{eq:g_reg} follows from Propositions \ref{prop:easy} and \ref{prop:hard}. 	
%
%
%\end{proof}

\begin{proposition}[The approximate and horizon subdifferentials of spectral functions]\label{prop:sub_app}
	%Let $(\jAlg, \jProd{}{})$ be an Euclidean Jordan algebra and 
	Let $F:\jAlg \to \ReIm$ be the spectral function induced by a symmetric function $f: \Re^r \to \ReIm$. Then, for $x \in \jAlg$, we have
	\begin{align}
	\partial F(x) &= \{ s \in \jAlg \mid \exists \jFr \in \jFr(x,s) \text { with } \diag(s,\jFr) \in \partial f(\lambda(x))
	\}. {\label{eq:g_app}}\\
	\hsub F(x) &= \{ s \in \jAlg \mid \exists \jFr \in \jFr(x,s) \text { with } \diag(s,\jFr) \in \hsub f(\lambda(x))
	\}.\label{eq:g3}
	\end{align}	
\end{proposition}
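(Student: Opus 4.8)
The plan is to derive the formulae for $\partial F$ and $\hsub F$ from the already-established formula for the regular subdifferential $\hat\partial F$ (the case $\msub = \hat\partial$ of \eqref{eq:g1}, which follows from Propositions~\ref{prop:easy} and \ref{prop:hard}) together with the definitions of the approximate and horizon subdifferentials as limits of regular subgradients. Both inclusions of each identity will be handled by a sequential argument, and the key technical difficulty is controlling the Jordan frames $\jFr^k \in \jFr(x^k, s^k)$ along a convergent sequence so as to extract a single limiting frame in $\jFr(x,s)$; this is the step I expect to be the main obstacle, since Jordan frames need not converge in general, but the set of Jordan frames of a bounded family is contained in a compact set (each primitive idempotent has norm $1$), so a subsequence argument will work.

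For the inclusion ``$\supseteq$'' in \eqref{eq:g_app}: suppose $s \in \jAlg$ and $\jFr \in \jFr(x,s)$ with $\diag(s,\jFr) \in \partial f(\lambda(x))$. By definition of the approximate subdifferential of $f$, there are sequences $v^k \to \lambda(x)$, $f(v^k) \to f(\lambda(x))$, $d^k \to \diag(s,\jFr)$ with $d^k \in \hat\partial f(v^k)$. The plan is to ``lift'' these via the fixed frame $\jFr$: set $x^k \coloneqq \Diag(v^k,\jFr)$ and $s^k \coloneqq \Diag(d^k,\jFr)$. Then $x^k \to x$, $s^k \to s$, and since $\jFr$ diagonalizes both $x^k$ and $s^k$ we have $\jFr \in \jFr(x^k, s^k)$ after reordering — here one must be slightly careful that the coordinates of $v^k$ are sorted so that $\diag(x^k,\jFr) = \lambda(x^k)$; since $v^k$ is close to $\lambda(x)$ this can be arranged, possibly permuting within blocks of equal entries of $\lambda(x)$, which does not affect $d^k \in \hat\partial f(v^k)$ thanks to \eqref{eq:sym_sub}. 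We have $F(x^k) = f(\lambda(x^k)) = f(v^k) \to f(\lambda(x)) = F(x)$, and by Proposition~\ref{prop:hard} applied at $x^k$ we get $s^k \in \hat\partial F(x^k)$. Hence $s \in \partial F(x)$.

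For the inclusion ``$\subseteq$'' in \eqref{eq:g_app}: let $s \in \partial F(x)$, so there are $x^k \to x$, $F(x^k) \to F(x)$, $s^k \to s$ with $s^k \in \hat\partial F(x^k)$. By the regular case of \eqref{eq:g1} there is, for each $k$, a frame $\jFr^k \in \jFr(x^k, s^k)$ with $\diag(s^k,\jFr^k) \in \hat\partial f(\lambda(x^k))$. Writing $\jFr^k = [c_1^k,\ldots,c_r^k]$, each $c_i^k$ is a primitive idempotent, hence $\norm{c_i^k} = 1$, so passing to a subsequence we may assume $c_i^k \to c_i$ for each $i$. Taking limits, the $c_i$ are mutually orthogonal idempotents summing to $e$ — so $\jFr \coloneqq [c_1,\ldots,c_r]$ is a Jordan frame — and since $\inProd{c_i^k}{x^k} = \lambda_i(x^k) \to \lambda_i(x)$ (continuity of $\lambda$) and $\inProd{c_i^k}{s^k} \to \inProd{c_i}{s}$, we get $\diag(x,\jFr) = \lambda(x)$ and $x = \sum_i \lambda_i(x) c_i$, $s = \sum_i \inProd{c_i}{s} c_i$, so $\jFr \in \jFr(x,s)$. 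Moreover $\diag(s^k,\jFr^k) \to \diag(s,\jFr)$, $\lambda(x^k) \to \lambda(x)$, and $f(\lambda(x^k)) = F(x^k) \to F(x) = f(\lambda(x))$, so by definition $\diag(s,\jFr) \in \partial f(\lambda(x))$. This gives the reverse inclusion.

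For \eqref{eq:g3} the argument is the same with the extra scaling sequence $t^k \downarrow 0$: in the ``$\supseteq$'' direction lift $v^k, d^k, t^k$ from a representation of $\diag(s,\jFr) \in \hsub f(\lambda(x))$ via the fixed frame $\jFr$, setting $x^k = \Diag(v^k,\jFr)$ and $s^k = \Diag(d^k / t^k, \jFr)$ so that $t^k s^k \to s$; in the ``$\subseteq$'' direction, given $s \in \hsub F(x)$ with witnessing sequences $x^k, s^k, t^k$, one has $\hat s^k \coloneqq t^k s^k \to s$ where $s^k \in \hat\partial F(x^k)$, extract via Proposition~\ref{prop:hard} the frames $\jFr^k \in \jFr(x^k, s^k)$ — note that $\jFr^k$ also lies in $\jFr(x^k, \hat s^k)$ since scaling does not change operator commutativity or the diagonalizing frame — then pass to a convergent subsequence of idempotents exactly as above to obtain $\jFr \in \jFr(x, s)$ and, using $\diag(\hat s^k, \jFr^k) = t^k \diag(s^k,\jFr^k)$ with $\diag(s^k,\jFr^k) \in \hat\partial f(\lambda(x^k))$, conclude $\diag(s,\jFr) \in \hsub f(\lambda(x))$. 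The only subtlety beyond the compactness argument is keeping the sorting convention $\diag(x^k,\jFr^k) = \lambda(x^k)$ consistent when passing to limits, which is harmless because the limit idempotents can always be reindexed within equal-eigenvalue blocks of $x$ using \eqref{eq:sym_sub}.
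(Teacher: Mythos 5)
Your proposal follows essentially the same route as the paper: use the regular case of \eqref{eq:g1} (Propositions~\ref{prop:easy} and \ref{prop:hard}) and pass to limits, lifting through a fixed frame $\jFr$ for ``$\supseteq$'' and extracting a convergent subsequence of primitive idempotents (each of norm $1$) for ``$\subseteq$''. The structure, the compactness argument, and the handling of sorting via \eqref{eq:sym_sub} all match the paper's proof.

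Two small slips. First, in the horizon ``$\supseteq$'' direction you set $s^k = \Diag(d^k/t^k, \jFr)$; this is wrong, because then $t^k s^k = \Diag(d^k,\jFr)$, and you only know $t^k d^k \to \diag(s,\jFr)$, not that $d^k$ itself converges. You should simply take $s^k = \Diag(d^k,\jFr)$ and $\tau^k = t^k$, so that $\tau^k s^k = \Diag(t^k d^k,\jFr) \to s$. Second, in the horizon ``$\subseteq$'' direction you cite Proposition~\ref{prop:hard} to extract the frames $\jFr^k \in \jFr(x^k,s^k)$ from $s^k \in \hat\partial F(x^k)$, but that step uses Proposition~\ref{prop:easy} (the easy inclusion gives the existence of the frame and the membership $\diag(s^k,\jFr^k)\in\hat\partial f(\lambda(x^k))$); Proposition~\ref{prop:hard} is the converse. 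Both are minor and easily repaired; with these corrections the argument is sound and coincides with the paper's.
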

\begin{proof}
First, we prove the inclusion ``$\subseteq$'' in \eqref{eq:g_app} and \eqref{eq:g3}. 
Let $s \in \partial F(x)$ or $s \in \hsub F(x)$. 
By definition, there are sequences $\{x^k\},\{s^k\},\{t^k\}$ such that $s^k \in \hat \partial F(x^k)$ holds for every $k$ and 
\[
x^k \to x,\quad f(\lambda(x^k))\to f(\lambda(x)),\quad  t^k s^k \to s.
\]
Here, there are two cases for $\{t^k\}$. If 
$s \in \partial \spec{f}(x)$, then $t^k = 1$ for every $k$. If $s \in \hsub \spec{f}(x)$, then $t^k \downarrow 0$.
Since $s^k \in \hat \partial F(x^k)$ holds for every $k$, Proposition~\ref{prop:easy} implies the existence of $\jFr^k \in \jFr(x^k,s^k)$ such that 
\[
\diag(s^k,\jFr^k) \in \hat \partial f(\lambda(x^k)), \,\, \forall k.
\]
Let $\jFr^k = [c_{1,k},\ldots, c_{r,k}]$. Since $\norm{c_{i,k}}=1$ for every $i$ and $k$, passing to a subsequence if necessary, we may assume that for every $i$, $c_{i,k}$ converges to some $\overline{c}_i$. Elementary properties of limits show that 
$\jProd{\overline{c}_i}{\overline{c}_j} = 0$ if $i\neq j$ and $\jProd{\overline{c}_i}{\overline{c}_i} = \overline{c}_i$. 
Therefore $\overline{\jFr} = [\overline{c}_1,\ldots, \overline{c}_r ]$ is a Jordan frame in $\jAlg$.

Now, we need to examine whether $\overline{\jFr} \in \jFr(x,s)$. We have
\[
x^k = \sum _{i=1}^r \lambda_{i}(x^k)c_{i,k}.
\]
Since each $\lambda _i(\cdot)$ is a continuous function and $x^k \to x$, we conclude that 
\[
x =  \sum _{i=1}^r \lambda_{i}(x)\overline{c}_{i}.
\]
An analogous argument shows that $\overline{\jFr}$ diagonalizes $s$. 
Gathering all we have shown, we obtain that $\diag(s^k,\jFr^k) \in \hat \partial f(\lambda(x^k)) $ holds for every $k$ and 
\[
\lambda(x^k) \to \lambda(x),\quad  f(\lambda(x^k)) \to f(\lambda(x)), \quad t^k\diag(s^k,\jFr^k) \to \diag(s,\overline{\jFr}).
\]
That is, $\overline{\jFr} \in \jFr(x,s)$ together with either
$\diag(s,\overline{\jFr}) \in \partial f(\lambda(x))$ (if $s \in \partial F(x)$) or 
 $\diag(s,\overline{\jFr}) \in \hsub f(\lambda(x))$ (if $s \in \hsub F(x)$).
 
We will now prove the inclusion ``$\supseteq$''.
Let $s \in \jAlg$ be such that 
there are sequences $\{u^k\},\{d^k\},\{t^k\}$  satisfying  $d^k \in \hat \partial f(u^k)$ for every $k$ and 
\[
u^k \to \lambda(x),\quad f(u^k) \to f(\lambda(x)),\quad t^k d^k \to \diag(s,\jFr),
\]
where $\jFr \in \jFr(x,s)$. Here, either $t^k = 1$ for every $k$ or $t^k \downarrow 0$.
 Let $\jFr = [c_1,\ldots, c_r]$.

For every $k$, let $P^k \in \Pe{r}$ be a permutation matrix such that 
$P^ku^k = \Sd{(u^k)}$. Since $ d^k \in \hat \partial f(u^k)$ holds for every $k$ and $f$ is a symmetric function, we have from \eqref{eq:sym_sub} that
\begin{equation}
P^kd^k \in \hat \partial f(\Sd{(u^k)}), \quad \forall k. \label{eq:ap_aux}
\end{equation}
Let
\[
x^k \coloneqq \Diag(u^k,\jFr),\quad s^k \coloneqq \Diag(d^k,\jFr), \quad \forall k.
\]
Let $\sigma$ be the permutation on the set $\{1,\ldots, r\}$ induced 
by $P^k$, i.e., $\sigma(i) = j$, if and only if, $P^k$ permutes the $i$-th and the $j$-th entries of a vector.
We have $\lambda(x^k) = \Sd{(u^k)}$ and $P^k\jFr \in \jFr(x^k,s^k)$, 
where $P^k\jFr $ is defined as
\[
P^k\jFr \coloneqq [c_{\sigma^{-1}(1)}, \ldots, c_{\sigma^{-1}(r)}].
\]
Therefore, from \eqref{eq:ap_aux} we have \begin{equation}
\diag(s^k,P^k\jFr) = P^kd^k \in \hat \partial f(\lambda(x^k)), \notag
\end{equation}
which combined with Proposition~\ref{prop:hard} shows that
\[
s^k \in \hat \partial F(x^k), \quad \forall k.
\]
Next, since $u^k \to \lambda(x)$, it follows that $x^k \to x$.
Again, recalling that $f$ is a symmetric function and that 
\[
F(x^k) = f(\lambda(x^k)) = f(\Sd{(u^k)}) = f(u^k),
\]
we have $F(x^k) \to F(x)$, since $f(u^k) \to f(\lambda(x))$.
Similarly, we have $t^k s^k \to s$, since $t^k d^k \to \diag(s,\jFr)$.
This shows that $s \in \partial F(x)$ (if $\diag(s,\jFr) \in \partial f(\lambda(x))$) or $s \in \hsub F(x)$ (if $\diag(s,\jFr) \in \hsub f(\lambda(x))$).
\end{proof} 
We can now state our main result.
%Suppose that $P \in \Pe{r}$, $x \in \jAlg$ and $\jFr = [c_1,\ldots, c_r]$ is a Jordan frame that diagonalizes $x$. We define $P(x,\jFr)$ as the 
%element obtained by permuting the eigenvalues of $x$ according to $P$ and $\jFr$, i.e.,
%
%\[
%P(x,\jFr) \coloneqq \Diag(P\diag(x,\jFr),\jFr).
%
%\]
%Since $P$ is a permutation matrix, it can also be seen 
%as permutation $\sigma$ on the set $\{1,\ldots, n\}$ such that 
%$\sigma(i) = j$, if and only if, $P$ permutes the $i$-th and the $j$-th entries of a vector. With that, if $x = \sum _ {i=1}^r a_i c_i$, we have
%
%\[
%P(x,\jFr) = \sum _ {i=1}^r a_{\sigma(i)} c_i = \sum _ {i=1}^r a_{i} c_{\sigma(i)}.
%
%\]
%We will also define
%
%\[
%P\jFr \coloneqq [c_{\sigma(1)},\ldots, c_{\sigma(r)}].
%
%\]
\begin{theorem}[Generalized subdifferentials of spectral functions]\label{theo:main}
	Let $(\jAlg, \jProd{}{})$ be a Euclidean Jordan algebra of rank $r$ and let $F:\jAlg \to \ReIm$ be the spectral function induced by a symmetric function $f: \Re^r \to \ReIm$. Then, for $x \in \jAlg$, we have
	\begin{align}
	\msub F(x) &= \{ s \in \jAlg \mid \exists \jFr \in \jFr(x,s) \text{ with } \diag(s,\jFr) \in \msub f(\lambda(x))\}, {\label{eq:g_gen}} \tag{Transfer}
	\end{align}
	whenever  $\msub$ is $\hat \partial, \partial$ or $\hsub$.	
%	Furthermore, suppose that $s \in \msub F(x)$ and that $\jFr \in \jFr(x,s)$. Then, for every $P \in \Pe{r}$, we have
%	\begin{equation}\label{eq:gen_p}
%	P(s,\jFr) \in  \msub F(P(x,\jFr)).
%	\end{equation}	
	
\end{theorem}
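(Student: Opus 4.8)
The plan is to assemble Theorem~\ref{theo:main} directly from the three cases that have already been treated in the preceding subsections, so that the proof is essentially a bookkeeping argument rather than a new computation. For the case $\msub = \hat\partial$, the inclusion ``$\subseteq$'' is exactly Proposition~\ref{prop:easy} (applied, for a given $s \in \hat\partial F(x)$, by picking any $\jFr \in \jFr(x,s)$, which is nonempty because $x$ and $s$ operator commute by Proposition~\ref{prop:p_com}), and the inclusion ``$\supseteq$'' is exactly Proposition~\ref{prop:hard}. Thus \eqref{eq:g_gen} holds verbatim when $\msub = \hat\partial$.

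Next I would dispatch the cases $\msub = \partial$ and $\msub = \hsub$ by simply quoting Proposition~\ref{prop:sub_app}, whose statement \eqref{eq:g_app}--\eqref{eq:g3} is precisely \eqref{eq:g_gen} for these two placeholders. Since these three are the only values the symbol $\msub$ is allowed to take in the statement, this exhausts all cases and the theorem follows. I would also note, for cleanliness, that in each of the three formulas the set on the right-hand side is implicitly nonempty-or-empty in a consistent way: if $\msub f(\lambda(x)) = \emptyset$ then the right-hand side is empty, which matches the left-hand side, because Proposition~\ref{prop:easy} forces any element of $\hat\partial F(x)$ to produce an element of $\hat\partial f(\lambda(x))$, and the limiting/horizon versions inherit this through Proposition~\ref{prop:sub_app}.

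Honestly, there is no genuine obstacle left at this stage: all the real work — the operator-commutativity via the commutation principle, the majorization-based Theorem~\ref{theo:diag_eig}, and the two-sided regular-subdifferential inclusions — has been carried out in Sections~\ref{sec:com}--\ref{sec:hard}, and Proposition~\ref{prop:sub_app} already packages the passage from $\hat\partial$ to $\partial$ and $\hsub$ via the standard sequential limiting construction (taking convergent subsequences of the Jordan frames $\jFr^k$, whose idempotents have unit norm, and checking the limit frame still diagonalizes both $x$ and $s$). The only thing to be careful about when writing the proof of Theorem~\ref{theo:main} is to make explicit that $\jFr(x,s)\neq\emptyset$ whenever $s$ is a generalized subgradient — otherwise the phrase ``$\exists \jFr \in \jFr(x,s)$'' on the right-hand side would be vacuously unsatisfiable — but this is immediate from Proposition~\ref{prop:p_com}. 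So the proof is a three-line citation argument: invoke Propositions~\ref{prop:easy} and \ref{prop:hard} for $\hat\partial$, and Proposition~\ref{prop:sub_app} for $\partial$ and $\hsub$.
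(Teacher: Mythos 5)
Your proposal is correct and matches the paper's own proof exactly: the paper simply cites Propositions~\ref{prop:easy}, \ref{prop:hard}, and \ref{prop:sub_app}, which is the same three-citation assembly you describe. The extra remark about $\jFr(x,s)\neq\emptyset$ (via Proposition~\ref{prop:p_com}) is a harmless clarification that is already implicit in the cited propositions.
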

\begin{proof}
Follows from Propositions~\ref{prop:easy}, \ref{prop:hard}, \ref{prop:sub_app}.
%Next, suppose that $s \in \msub  F(x), \jFr \in \jFr(x,s)$ and $P \in \Pe{r}$. Since $\lambda(x) = \lambda(P(x,\jFr))$, using \eqref{eq:g_gen} we have 
%\begin{equation}
%\msub F(P(x,\jFr)) = \{ \hat s \mid \diag(\hat s,\hat \jFr) \in \msub f(\lambda(x)),\quad  \hat \jFr \in \jFr(P(x,\jFr),\hat s),
%\quad P(x,\jFr),\hat s \text{ operator commute}
%\}\label{eq:reg_aux}
%\end{equation}
%On the other hand, since $s \in \msub F(x)$ and $\jFr \in \jFr(x,s)$, Proposition \ref{prop:easy} implies that 
%
%\[
%\diag(s,\jFr) \in \msub f(\lambda(x)).
%
%\]
%We also have
%\begin{align*}
%\diag(P(s,\jFr),P\jFr) = \diag(s,\jFr )\\
%P\jFr \in \jFr(P(x,\jFr),P(s,\jFr)).
%\end{align*}
%From \eqref{eq:reg_aux}, we conclude that $P(s,\jFr) \in \hat \partial F(P(x,\jFr))$.
\end{proof}

\subsection{Convex hull of generalized subdifferentials and the Clarke subdifferential}\label{sec:conv}
In this subsection, we will  prove the following meta-formula
\[
\conv \msub F(x) = 
\{ s \in \jAlg \mid \exists \jFr \in \jFr(x,s) \text { with } \diag(s,\jFr) \in \conv \msub f(\lambda(x))
\},
\]
whenever $\msub$ is a subdifferential which behaves nicely with respect to permutations and for which \eqref{eq:g_gen} holds.
One of the motivations for this formula is, of course, the study 
of the Clarke subdifferential, which we will discuss next.
%When discussing the Clarke subdifferential, it is common 
%to assume that the functions involved are locally Lipschitz continuous.
First, we recall that $f$ is \emph{locally Lipschitz continuous 
at $\hat {u}$} if there exists some neighbourhood $U$ of $\hat {u}$ and a constant $\kappa$ such that 
\[
\abs{f(v) - f(u)} \leq \kappa \norm{v - u}, \quad \forall u,v \in U\cap \dom f.
\]

%When $f$ is locally Lipschitz continuous at all points of some open set $U \subseteq \dom f$, Rademacher's Theorem ensures that $f$ is differentiable \emph{almost} everywhere in $U$, e.g., see Theorem~9.60 in \cite{RW}. Denoting by $D_U \subseteq U$ the set points of $U$ at which $f$ is differentiable, the \emph{Bouligand derivative at $u \in U$} is defined as 
%$$
%\bsub f(u) \coloneqq \{d \mid \exists \{u^k\} \subseteq D_U, \text{ with } u^k \to u, \nabla f(u^k) \to d \}.
%$$
%Then, the \emph{Clarke subdifferential at $u\in U$} is defined as 
%$$
%\csub f(u) \coloneqq \conv \partial _B f(u).
%$$
Using the construction of the Clarke subdifferential through 
the Bouligand derivative, Baes proved in his PhD thesis that, if $f$ is locally Lipschitz, 
then the meta-formula \eqref{eq:g_gen} holds 
when $\msub$ is either the Bouligand or the Clarke subdifferential, see 
Proposition 4.5.1 and Theorems~4.5.4 and 4.5.5 in \cite{B06phd}.
However, denoting by $\csub$ the Clarke subdifferential, it turns out that, under local Lipschitzness, we have
\[
\csub f(u) = \conv \partial f(u), \quad \forall u \in \interior (\dom f),
\]
 see Theorem~9.61 in \cite{RW}.
Therefore, with some effort, Theorem~\ref{theo:main} can 
be used to give another proof that \eqref{eq:g_gen} holds when 
$\msub$ is $\csub$ and $f$ is locally Lipschitz continuous. 
The first step towards this idea is the following result,
which is a variant of Theorem~\ref{theo:diag_eig}.
%The proofs of Theorem~4.5.4 and 4.5.5 in \cite{B06phd} are highly nontrivial and they require judicious bookkeeping of sequences of Jordan frames. 
%This suggests that a vigorous effort might be necessary in order to prove the transfer principle for the Clarke subdifferential by using the approximate subdifferential. 
%Luckily, we will see that a great deal of weightlifting can be delegated to
%the results we have already proven, especially Theorem~\ref{theo:diag_eig}. 

\begin{proposition}\label{prop:diag_eig}
Let $x,s\in \jAlg$ be such that $x$ and $s$ operator commute. Then, for every $\jFr \in \jFr(x)$ and 
every $\hat \jFr \in \jFr(x,s)$ we have
\[
\diag(s,\jFr) \in \conv \{ P \diag(s, \hat\jFr) \mid P \in \Pe{r}(\lambda(x)) \}.
\]
\end{proposition}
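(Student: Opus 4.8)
The plan is to adapt the argument used for Theorem~\ref{theo:diag_eig}, with $\lambda'(x;z)$ replaced throughout by $\diag(s,\hat\jFr)$ and with Proposition~\ref{prop:maj} invoked blockwise. Observe first that if $x$ and $s$ do not operator commute then $\jFr(x,s)=\emptyset$ and the statement is vacuous, so I may assume an $\hat\jFr\in\jFr(x,s)$ exists. Write $\jFr=[c_1,\dots,c_r]$ and $x=\sum_{i=1}^r\lambda_i(x)c_i$, and as in the proof of Theorem~\ref{theo:diag_eig} let $r_0=0<r_1<\cdots<r_\ell=r$ mark the blocks of equal eigenvalues of $x$, with $n_j=r_j-r_{j-1}$. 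Set $P_j\coloneqq e_{r_j}(x)=\sum_{i:\lambda_i(x)=\lambda_{r_j}(x)}c_i$; by Proposition~\ref{prop:unique_sum} this idempotent does not depend on the Jordan frame of $x$ used to form it, so it is also the sum of the corresponding block of idempotents of $\hat\jFr$. Let $\jAlg^j\coloneqq V(P_j,1)$, which by Theorem~\ref{theo:peirce1} is a Euclidean Jordan algebra of rank $n_j$ with identity $P_j$.

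The core of the proof will be a simultaneous ``block diagonalization'' of $\diag(s,\jFr)$ and $\diag(s,\hat\jFr)$ compatible with $\jAlg^1,\dots,\jAlg^\ell$. Writing $\hat\jFr=[\hat c_1,\dots,\hat c_r]$, I would set $\jFr^j\coloneqq[c_{r_{j-1}+1},\dots,c_{r_j}]$ and $\hat\jFr^j\coloneqq[\hat c_{r_{j-1}+1},\dots,\hat c_{r_j}]$; since the eigenvalues of $x$ are nonincreasing along both $\jFr$ and $\hat\jFr$, the idempotents in each of these sub-tuples sum to $P_j$, hence $\jFr^j$ and $\hat\jFr^j$ are Jordan frames of $\jAlg^j$. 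Put $s^j\coloneqq Q_{P_j}(s)$, the orthogonal projection of $s$ onto $\jAlg^j$ by Theorem~\ref{theo:peirce1}. Because $\hat\jFr$ diagonalizes $s$ and the idempotents of $\hat\jFr$ outside block $j$ lie in $V(P_j,0)$, one gets $s^j=\sum_{i=r_{j-1}+1}^{r_j}\inProd{\hat c_i}{s}\,\hat c_i$, so $\hat\jFr^j$ diagonalizes $s^j$ in $\jAlg^j$; and using that $Q_{P_j}$ is self-adjoint with $Q_{P_j}(c_i)=c_i$ for $i$ in block $j$, the block-$j$ parts of $\diag(s,\jFr)$ and $\diag(s,\hat\jFr)$ are exactly $\diag(s^j,\jFr^j)$ and $\diag(s^j,\hat\jFr^j)$, the latter being a permutation of $\lambda(s^j,\jAlg^j)$.

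With that in hand, I would apply Proposition~\ref{prop:maj} inside $\jAlg^j$ to conclude $\diag(s^j,\jFr^j)\prec\lambda(s^j,\jAlg^j)$ and hence $\diag(s^j,\jFr^j)\in\conv\{P\lambda(s^j,\jAlg^j)\mid P\in\Pe{n_j}\}$; since neither the majorization relation nor this orbit hull changes when $\lambda(s^j,\jAlg^j)$ is permuted, $\lambda(s^j,\jAlg^j)$ may be swapped for $\diag(s^j,\hat\jFr^j)$. Thus for each $j$ there are coefficients $\alpha_{j,k}\geq0$ with $\sum_k\alpha_{j,k}=1$ and permutation matrices $P^{j,k}\in\Pe{n_j}$ with $\diag(s^j,\jFr^j)=\sum_k\alpha_{j,k}P^{j,k}\diag(s^j,\hat\jFr^j)$. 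The final step is the gluing, which is verbatim the end of the proof of Theorem~\ref{theo:diag_eig}: assembling these into block-diagonal permutation matrices with diagonal blocks $P^{1,j_1},\dots,P^{\ell,j_\ell}$ and forming the corresponding product of convex combinations expresses $\diag(s,\jFr)$ as $A\,\diag(s,\hat\jFr)$ with $A$ a convex combination of such matrices; each of them fixes $\lambda(x)$ because its block pattern follows the equal-eigenvalue blocks of $x$, hence lies in $\Pe{r}(\lambda(x))$, which yields the claim.

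I expect the gluing to be entirely routine (it is already done for Theorem~\ref{theo:diag_eig}), and likewise the blockwise application of Proposition~\ref{prop:maj}. The step deserving care is the second paragraph: turning operator commutativity of $x$ and $s$, together with Proposition~\ref{prop:unique_sum}, into the statement that $\jFr^j$ and $\hat\jFr^j$ are Jordan frames of the \emph{same} subalgebra $\jAlg^j$ and that $\hat\jFr^j$ diagonalizes the projection $s^j$, while keeping track of which inner product is being used when identifying the block parts of $\diag(s,\cdot)$ with diagonals computed in $\jAlg^j$. This is precisely what makes the blockwise majorization legitimate.
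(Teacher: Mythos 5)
Your proof is correct and reaches the conclusion via the same blockwise structure that underlies the paper's argument: decompose along the Peirce blocks $\jAlg^j = V(e_{r_j}(x),1)$ cut out by the equal-eigenvalue groups of $x$, invoke Proposition~\ref{prop:maj} inside each $\jAlg^j$, and glue the blockwise permutations into a block-diagonal element of $\Pe{r}(\lambda(x))$. The small but genuine difference is the intermediate object. The paper first cites Theorem~\ref{theo:diag_eig} to place $\diag(s,\jFr)$ in the orbit hull of $\lambda'(x;s)$, and then, using Baes' directional-derivative formula (Theorem~\ref{theo:dir}) block by block, shows $\lambda'(x;s)=\hat P\,\diag(s,\hat\jFr)$ for a suitable $\hat P\in\Pe{r}(\lambda(x))$, finishing by the group property. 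You instead compare $\diag(s,\jFr)$ and $\diag(s,\hat\jFr)$ directly: because $s$ operator commutes with $x$, the projection $s^j=Q_{e_{r_j}(x)}(s)$ is diagonalized by $\hat\jFr^j$, so $\diag(s^j,\hat\jFr^j)$ is already a permutation of $\lambda(s^j,\jAlg^j)$ and Proposition~\ref{prop:maj} applies verbatim with it in the role the paper reserves for $(\lambda^j)'(x;s)$. This removes the detour through Theorem~\ref{theo:dir} and the citation of Theorem~\ref{theo:diag_eig}, making the proof of this proposition self-contained; the paper's version is shorter on the page but only because it reuses Theorem~\ref{theo:diag_eig} as a black box. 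Your verifications — that $\jFr^j$ and $\hat\jFr^j$ are Jordan frames of the same $\jAlg^j$ via Proposition~\ref{prop:unique_sum}, that $Q_{e_{r_j}(x)}$ preserves the relevant inner products so the blockwise diagonals agree, and that the block pattern forces the glued permutations to fix $\lambda(x)$ — are all the right things to check and are all sound.
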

%Proposition~\ref{prop:diag_eig} might seem opaque at first, but it is actually expressing 
%a simple idea. By Proposition~\ref{prop:maj}, we already know that $\diag(s,\jFr)$ must be a convex combination of permutations of $\lambda(s)$.
%Since the components of $\diag(s,\hat \jFr)$ are the eigenvalues of $s$,
%we conclude that $\diag(s,\jFr)$ must be a convex combination of permutations of $\diag(s,\hat \jFr)$ as well. What is new in  Proposition~\ref{prop:diag_eig} is that, provided that $\hat \jFr \in \jFr(x,s)$,
%the permutations appearing in the convex combinations can be taken 
%in such a way that they preserve $\lambda(x)$.
%Again, we have a situation where the result is expressing a simple idea, but there is a 
\begin{proof}
By Theorem~\ref{theo:diag_eig}, we already have
\begin{equation}
\diag(s, \jFr) \in \conv \{ P \lambda'(x,s) \mid P \in \Pe{r}(\lambda(x)) \}. \label{eq:conv_aux}
\end{equation}
All we need to do now is to relate $\lambda'(x,s)$ and $\diag(s, \hat\jFr)$.
For that, we will proceed as in the proof of Theorem~\ref{theo:diag_eig}.

Let us consider the spectral decomposition of $x$ according 
to $\hat \jFr = [\hat c_1, \ldots, \hat c_r]$,
\[
x = \sum _{i=1}^r \lambda_i(x) \hat c_i.
\]
Then, we use the notation described in Section~\ref{sec:eig_dir} and denote 
by $l_i(x)$ the ``relative position'' of the index $i$ with respect 
the eigenvalues of $x$ that are equal to $\lambda _i(x)$.
Furthermore, we let $e_i$ be the sum of the idempotents $\hat c_i$ associated to 
the eigenvalues equal to $\lambda _i(x)$. We also let $r_1,\ldots, r_\ell$ be such that 
\begin{multline*}
\lambda _1(x) = \cdots = \lambda _{r_1}(x) > \lambda _{r_1+1}(x) = \cdots =
\lambda _{r_2}(x) >\\ \lambda_{r_2+1}(x) =  \cdots = \lambda _{r_3}(x) >\cdots \lambda _{r_\ell}(x).
\end{multline*}
Here, $\ell$  is the number of \emph{distinct} eigenvalues of $x$. 
For convenience, we define $r_0 = 0$ and $n_j = r_{j} - r_{j-1}$ for 
$j \in \{1,\ldots, \ell\}$.
Then, we  divide $\diag(s,\hat \jFr)$ and $\lambda'(x;s)$ in $\ell$ parts according to the blocks of equal eigenvalues of $x$:
\begin{align*}
\diag(s,\hat \jFr) &= (u^1,\ldots, u^\ell)\\
\lambda'(x;s) &= (v^1,\ldots, v^\ell).
\end{align*}
First, we observe that if $\lambda_i(x) = \lambda_j(x)$, then we have $e_i = e_j$. Then, from the formula for the directional derivatives (Theorem~\ref{theo:dir}) and the fact that  $\hat \jFr$ diagonalizes $s$, we obtain 
\begin{align*}
u^j & = (\inProd{s}{\hat c_{r_{j-1}+1}},\ldots, \inProd{s}{\hat c_{r_j}} ) \in \Re^{n_j}\\
v^j & = (\lambda _{l_{r_{j-1}+1}}(Q_{e_{r_j}}(s);\jAlg^j), \ldots,  \lambda _{l_{r_{j}}}(Q_{e_{r_j}}(s);\jAlg^j) ) \in \Re^{n_j},
\end{align*}
where $\jAlg^j  = V(e_{r_j},1)$.
We recall that $Q_{e_{r_j}}(s)$ is the orthogonal projection of $s$ onto 
$V(e_{r_j},1)$. And, again, because $\hat \jFr$ diagonalizes $s$, we 
obtain 
\[
Q_{e_{r_j}}(s) = \sum _{i=r_{j-1}+1}^{r_j} \inProd{s}{\hat c_{i}} \hat c_{i},
\]
which is the spectral decomposition of $Q_{e_{r_j}}(s)$ in the algebra
$\jAlg^j$. In particular, the eigenvalues of $Q_{e_{r_j}}(s) $ in the algebra $\jAlg^j$ are precisely the components of $u^j$. 
We also need to recall that $\lambda _{l_{r_{j-1}+k}}(Q_{e_{r_j}}(s);\jAlg^j)$ is, in fact, the $k$-th largest 
eigenvalue of $Q_{e_{r_j}}(s)$ in the algebra $\jAlg^j$.

Piecing everything together, we conclude that $v^j$ is just the result of 
sorting $u^j$ in nonincreasing order.  Therefore, there exists a permutation matrix $P^j \in \Pe{n_j}$ such that $v^j = P^ju^j$, for every $j \in \{1,\ldots, \ell \}$. Then, if we let 
\[
\hat P = \begin{pmatrix}
P^{1} & &\\
& \ddots   & \\
& & P^{\ell}
\end{pmatrix},
\]
we have $\lambda'(x,s) = \hat P\diag(s,\hat \jFr)$ and since the block structure of $P$ follows the blocks of equal eigenvalues of $\lambda(x)$, we have 
$P \in \Pe{r}(\lambda(x))$. From \eqref{eq:conv_aux},
we have
\begin{multline*}
\diag(s, \jFr) \in \conv \{ P\hat P \diag(s,\hat \jFr) \mid P \in \Pe{r}(\lambda(x)) \} = \\ \conv \{ P \diag(s,\hat \jFr) \mid P \in \Pe{r}(\lambda(x)) \},
\end{multline*}
since $\Pe{r}(\lambda(x))$ is a group.
\end{proof}
For what follows, we say that a subdifferential $\msub$ is \emph{permutation compatible} if 
\[
\msub f(Pu) = P \msub f(u),\quad \forall u \in \Re^r
\]
whenever $f:\Re^r \to \ReIm$ is a symmetric function and $P \in \Pe{r}$. 
We note that all subdifferentials $\hat \partial,\partial, \hsub, \csub$ that have appeared so far in this paper are 
permutation compatible.
With that, we are ready to prove the following meta-theorem which might be applicable to other subdifferentials not discussed in this paper. 
\begin{theorem}[Convex hull of generalized subdifferentials]\label{theo:hull}
	%Let $(\jAlg, \jProd{}{})$ be a Euclidean Jordan algebra and 
	Let $F:\jAlg \to \ReIm$ be the spectral function induced by a symmetric function $f: \Re^r \to \ReIm$. Then, for $x \in \jAlg$, we have
\begin{equation}
\conv \msub F(x) =  
\{ s \in \jAlg \mid \exists \jFr \in \jFr(x,s) \text { with } \diag(s,\jFr) \in \conv \msub f(\lambda(x))
\}, {\label{eq:g_conv}} \tag{Transfer-Hull}
\end{equation}
where $\msub$ is any permutation compatible subdifferential for which \eqref{eq:g_gen} holds.
In particular, if $\lambda(x) \in \interior (\dom f)$ and $f$ is locally Lipschitz continuous at $\lambda(x)$, then 
\eqref{eq:g_gen} holds when $\msub = \csub$.
\end{theorem}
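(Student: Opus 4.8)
The plan is to establish both inclusions in \eqref{eq:g_conv} directly, using: the linearity of the maps $\diag(\cdot,\jFr)$ and $\Diag(\cdot,\jFr)$; the fact that operator commutativity with a fixed $x$ is preserved under convex combinations (since $L_x$ then commutes with the corresponding convex combination of Lyapunov operators); Proposition~\ref{prop:diag_eig}; and the standing hypotheses that $\msub$ is permutation compatible and that \eqref{eq:g_gen} holds for $\msub$. Only at the very end would I specialize to the Clarke subdifferential.

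For the inclusion ``$\subseteq$'', I would take $s=\sum_{i}\alpha_i s_i$ with $s_i\in\msub F(x)$, $\alpha_i\ge 0$ and $\sum_i\alpha_i=1$. By \eqref{eq:g_gen} each $s_i$ operator commutes with $x$, hence so does $s$, so $\jFr(x,s)\neq\emptyset$; I would then fix any $\jFr\in\jFr(x,s)\subseteq\jFr(x)$. Again by \eqref{eq:g_gen}, for each $i$ there is $\hat\jFr_i\in\jFr(x,s_i)$ with $\diag(s_i,\hat\jFr_i)\in\msub f(\lambda(x))$. Proposition~\ref{prop:diag_eig}, applied to $x$, $s_i$, the frame $\jFr\in\jFr(x)$ and the frame $\hat\jFr_i\in\jFr(x,s_i)$, yields $\diag(s_i,\jFr)\in\conv\{P\diag(s_i,\hat\jFr_i)\mid P\in\Pe{r}(\lambda(x))\}$; since each $P\in\Pe{r}(\lambda(x))$ fixes $\lambda(x)$ and $\msub$ is permutation compatible, every $P\diag(s_i,\hat\jFr_i)$ lies in $\msub f(\lambda(x))$, so $\diag(s_i,\jFr)\in\conv\msub f(\lambda(x))$. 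Finally, linearity of $\diag(\cdot,\jFr)$ together with convexity of $\conv\msub f(\lambda(x))$ gives $\diag(s,\jFr)=\sum_i\alpha_i\diag(s_i,\jFr)\in\conv\msub f(\lambda(x))$, as desired.

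For ``$\supseteq$'', I would take $s$ and $\jFr=[c_1,\dots,c_r]\in\jFr(x,s)$ with $\diag(s,\jFr)=\sum_i\alpha_i d_i$, where $d_i\in\msub f(\lambda(x))$, $\alpha_i\ge 0$ and $\sum_i\alpha_i=1$, and set $s_i\coloneqq\Diag(d_i,\jFr)$. Since $\jFr$ diagonalizes $s_i$ by construction and $\jFr\in\jFr(x)$, we get $\jFr\in\jFr(x,s_i)$ and $\diag(s_i,\jFr)=d_i\in\msub f(\lambda(x))$, hence $s_i\in\msub F(x)$ by \eqref{eq:g_gen}. Because $\jFr$ diagonalizes $s$, we have $s=\Diag(\diag(s,\jFr),\jFr)=\sum_i\alpha_i\Diag(d_i,\jFr)=\sum_i\alpha_i s_i\in\conv\msub F(x)$.

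For the Clarke statement, the plan is to reduce to the approximate subdifferential, for which \eqref{eq:g_gen} is already available (Theorem~\ref{theo:main}) and which is permutation compatible. Continuity of $\lambda$ together with $\lambda(x)\in\interior(\dom f)$ gives $x\in\interior(\dom F)$, and the $1$-Lipschitzness of $\lambda$ (Lemma~\ref{lem:eig}) together with local Lipschitzness of $f$ at $\lambda(x)$ gives local Lipschitzness of $F$ at $x$; hence Theorem~9.61 in \cite{RW} yields $\csub F(x)=\conv\partial F(x)$ and $\csub f(\lambda(x))=\conv\partial f(\lambda(x))$. Substituting these identities into \eqref{eq:g_conv} with $\msub=\partial$ then gives $\csub F(x)=\{s\in\jAlg\mid\exists\,\jFr\in\jFr(x,s)\text{ with }\diag(s,\jFr)\in\csub f(\lambda(x))\}$, which is exactly \eqref{eq:g_gen} for $\msub=\csub$. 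I expect the main obstacle to be nothing deep in the algebra but rather two bookkeeping subtleties: first, the common frame $\jFr$ chosen for a convex combination $s$ need not equal any of the frames $\hat\jFr_i$ of its summands, and Proposition~\ref{prop:diag_eig} is precisely the tool that bridges this gap; second, the transfer of local Lipschitzness (and of membership in the interior of the domain) from $f$ to $F$ that is needed before Theorem~9.61 can be invoked.
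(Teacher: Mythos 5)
Your proof is correct and follows essentially the same route as the paper's: the ``$\supseteq$'' direction via applying \eqref{eq:g_gen} to each $\Diag(d_i,\jFr)$, the ``$\subseteq$'' direction via a common frame for $x$ and the convex combination plus Proposition~\ref{prop:diag_eig} and permutation compatibility, and the Clarke special case via $\csub = \conv\partial$ under local Lipschitzness. The only (mild) improvement is presentational: you handle arbitrary $\ell$-term convex combinations directly in ``$\subseteq$'', whereas the paper writes out only the two-element case, and you make explicit the step $\lambda(x)\in\interior(\dom f)\Rightarrow x\in\interior(\dom F)$ needed before invoking Theorem~9.61 of \cite{RW}.
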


\begin{proof}
First we prove the ``$\supseteq$'' inclusion.
Suppose $s$ and $\jFr$ are such that $\jFr \in \jFr(x,s)$ and  $\diag(s,\jFr)$ is the convex 
combination of $d_1,\ldots, d_\ell \in \msub  f(\lambda(x))$.
Then, since \eqref{eq:g_gen} holds, we have 
\[
\Diag(d_i,\jFr) \in \msub F(x),\quad \forall i \in \{1,\ldots,\ell\}.
\]
Because $s$ is a convex combination of the 
 $\Diag(d_i,\jFr)$, we obtain $s \in  \conv \msub F(x)$.
 
Next, we prove the ``$\subseteq$'' inclusion. Let $s_1,s_2 \in \msub F(x)$. Since 
\eqref{eq:g_gen} holds, there are $\jFr_1 \in \jFr(x,s_1)$ and $\jFr_2 \in \jFr_2(x,s_2)$ such that 
\begin{equation}
\diag(s_1,\jFr_1) \in \msub f(\lambda(x)),\quad \diag(s_2,\jFr_2) \in \msub f(\lambda(x)). \label{eq:conv_aux2}
\end{equation}
Let $s_3$ be a convex combination of $s_1,s_2$, so that
\[
s_3 = \alpha s_1 + (1-\alpha)s_2,
\]
for some $\alpha \in [0,1]$.
Since $x,s_1$ and $x,s_2$ are pairs of simultaneously diagonalizable elements, the same must be true of the pair $x,s_3$, see \eqref{eq:simultaneous}. We conclude that there exists $\jFr_3 \in \jFr(x,s_3)$.
Now, we invoke Proposition~\ref{prop:diag_eig} with $\jFr = \jFr_3$ and $\hat \jFr = \jFr_1$, to conclude that 
\[
\diag(s_1,\jFr_3) \in \conv \{P \diag(s_1,\jFr_1) \mid P \in \Pe{r}(\lambda(x)) \}.
\]
Because $\msub$ is permutation compatible, \eqref{eq:conv_aux2} implies that $P \diag(s_1,\jFr_1)$ belongs to
$\msub f(\lambda(x))$ for every $ P \in \Pe{r}({\lambda(x)})$.
Therefore, $\diag(s_1,\jFr_3) \in \conv \msub f(\lambda(x))$. A 
completely analogous argument for $s_2$ shows that  
\[
\diag(s_1,\jFr_3) \in \conv \msub f(\lambda(x)),\quad \diag(s_2,\jFr_3) \in \conv \msub f(\lambda(x)).
\]
Since $\diag(s_3,\jFr_3)$ is a convex combination of $\diag(s_1,\jFr_3) $ and $\diag(s_2,\jFr_3) $, we conclude that, indeed, 
\[
\diag(s_3,\jFr_3) \in \conv \msub f(\lambda(x)),
\]
which proves the inclusion ``$\subseteq$''.

Finally, if $f$ is locally Lipschitz continuous at $\lambda(x) \in \interior (\dom f)$, the fact that the eigenvalue map is Lipschitz continuous (Lemma~\ref{lem:eig}) shows that $F$ must be locally Lipschitz continuous at $x$. Therefore,
\[
\csub F(x) = \conv \partial F(x),\quad  \csub f(\lambda(x)) = \conv \partial f(\lambda(x)).
\]
This shows that \eqref{eq:g_gen} holds with $\msub = \csub$.
\end{proof}
Next, we will take a look at the Clarke subdifferential of spectral functions without assuming local Lipschitzness, in order to extend 
Baes' results.
First, we will briefly explain some technical issues related to 
this task.
In Theorem~8.9 of \cite{RW}, we see that each of the generalized subdifferentials $\hat \partial, \partial, \hsub$ is associated to a corresponding notion  of normal cone. 
In this context, the Clarke subdifferential is defined using 
the convexified version of the normal cone associated to $\partial$, see Section J in chapter 8 of \cite{RW}. 
The problem is that, by doing so, the Clarke subdifferential can be larger 
than the convex hull of the approximate subdifferential. 
Therefore, in general, we have $\csub F(x) \neq \conv \partial F(x)$.

Nevertheless, under local lower semicontinuity, we have the following, see 
%Proposition~2.6 in ... or 
Lemma 4.1 in \cite{LS05}.
We recall that $f:\Re^r\to \ReIm$ is said to be \emph{locally lower semicontinuous at 
$u$}, if $f(u)$ is finite  and there exists $\epsilon > 0$ such that
$\{v \in \Re^r \mid \norm{u-v} \leq \epsilon, f(v) \leq \alpha \} $ is closed for every $\alpha$ satisfying $\alpha \leq f(u)+\epsilon$, see 
Definition~1.33 in \cite{RW}. 

\begin{lemma}[Lemma 4.1 in \cite{LS05}]\label{lem:clarke}
Suppose $f:\Re^r \to \ReIm$ is locally lower semicontinuous at $u$. Then,
\[
\csub f(u) = \closure (\conv \partial f(u) + \conv \hsub f(u)).
\]
\end{lemma}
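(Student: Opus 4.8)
The plan is to push the whole question onto the epigraph of $f$ and reduce it to a single computation of a Clarke normal cone. Recall from Section~J of Chapter~8 of \cite{RW} that the Clarke subdifferential is recovered as
\[
\csub f(u) = \{ v \in \Re^r \mid (v,-1) \in \closure \conv N_{\mathrm{epi}\, f}(u, f(u)) \},
\]
where $N_{\mathrm{epi}\, f}(u,f(u))$ is the limiting (Mordukhovich) normal cone and $\closure \conv$ denotes closed convex hull; moreover, by Theorem~8.9 in \cite{RW}, this limiting normal cone already encodes $\partial f(u)$ through its ``$(v,-1)$'' slice and $\hsub f(u)$ through its ``$(v,0)$'' slice. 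So the first step would be to record the precise shape of $N_{\mathrm{epi}\, f}(u,f(u))$: local lower semicontinuity at $u$ makes $\mathrm{epi}\, f$ locally closed at $(u,f(u))$, and the attentive convergence built into the normal-cone construction then forces $f(u^k)\to f(u)$, which is exactly the convergence used in the definitions of $\partial f$ and $\hsub f$. This yields the decomposition
\[
N_{\mathrm{epi}\, f}(u,f(u)) = \{(\beta d, -\beta) \mid \beta > 0,\ d \in \partial f(u)\} \cup \{(w,0) \mid w \in \hsub f(u)\}.
\]

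Call this set $K$. Next I would use two structural facts. First, $K$ is a cone, and $0 \in K$ because $0 \in \hsub f(u)$; hence $\conv K$ is a convex cone, equal to the set of all sums of finitely many elements of $K$. Second, $\hsub f(u)$ is itself a cone, so $\conv \hsub f(u)$ is a convex cone closed under addition. Expanding a generic finite sum $\sum_i(\beta_i d_i,-\beta_i)+\sum_j(w_j,0)$ from $K$ and collecting the second coordinate into a scalar $\tau=\sum_i\beta_i\ge 0$, these two facts give the clean description
\[
\conv K = \bigl\{ (\tau c + w,\, -\tau) \;\big|\; \tau\ge 0,\ c\in\conv\partial f(u),\ w\in\conv\hsub f(u) \bigr\}
\]
(with the convention $\tau c := 0$ when $\tau=0$). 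Slicing at $\tau = 1$ then gives $\{ v \mid (v,-1)\in\conv K\} = \conv\partial f(u)+\conv\hsub f(u)$.

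The remaining step is to interchange the closure with the affine slice, i.e., to show $\{ v \mid (v,-1)\in\closure\conv K\} = \closure\bigl(\conv\partial f(u)+\conv\hsub f(u)\bigr)$. One inclusion is immediate. For the other, if $(v,-1)$ is a limit of points $(v_k,-\tau_k)\in\conv K$ with $\tau_k\to 1$, then for large $k$ we have $\tau_k>0$, so $(v_k/\tau_k,-1)=\tfrac1{\tau_k}(v_k,-\tau_k)\in\conv K$ because $\conv K$ is a cone, and $v_k/\tau_k\to v$ puts $v$ in the closure of the $\tau=1$ slice. Combining the three displays gives $\csub f(u)=\closure(\conv\partial f(u)+\conv\hsub f(u))$.

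The hard part is the decomposition of $N_{\mathrm{epi}\, f}(u,f(u))$ asserted in the first paragraph: this is exactly where local lower semicontinuity is used in an essential way, since without it $\mathrm{epi}\, f$ need not be locally closed at $(u,f(u))$, the limiting normal cone can acquire directions that $\partial f$ and $\hsub f$ do not see, and the identity fails. Everything after that — the description of $\conv K$ and the closure/slice interchange — is elementary bookkeeping, the only mild point being a careful treatment of the degenerate case $\tau=0$.
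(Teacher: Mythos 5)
The paper does not prove this lemma: it is quoted verbatim as Lemma~4.1 of the cited reference (Li--Singer) with no in-paper argument, so there is no authorial proof to compare against. Your blind proof is correct and follows the standard route: the identity is reduced to the description of the limiting normal cone to the epigraph from Theorem~8.9 of Rockafellar--Wets, where local lower semicontinuity is used exactly as you say (it makes $\mathrm{epi}\,f$ locally closed and lines up the $f$-attentive convergence with the definitions of $\partial f$ and $\hsub f$). The elementary steps that follow are all sound: $\conv K$ is a convex cone and hence the set of finite sums of elements of $K$; collecting the last coordinate gives the $(\tau c + w, -\tau)$ description (your parenthetical convention, plus the observation that when $\partial f(u)=\emptyset$ both sides are empty, covers the degenerate case cleanly); and the rescaling-by-$1/\tau_k$ trick correctly shows that taking the $\tau=1$ slice commutes with the closure. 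One cosmetic point: the claim $0\in\hsub f(u)$ (used to guarantee $0\in K$) is worth noting as a consequence of Theorem~8.9 under the local-lsc hypothesis rather than a tautology from the sequential definition; but this does not affect the validity of the argument, since the description of $\conv K$ as finite sums over a cone already holds without it.
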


With the aid of Lemma~\ref{lem:clarke}, we are now in position to extend Baes' results on the Clarke subdifferential.
\begin{theorem}[Clarke subgradients of  spectral functions under local lower semicontinuity]\label{theo:clarke}
	%Let $(\jAlg, \jProd{}{})$ be a Euclidean Jordan algebra and 
	Let $F:\jAlg \to \ReIm$ be the spectral function induced by a symmetric function $f: \Re^r \to \ReIm$. The following hold:
	\begin{enumerate}[label=$(\roman*)$]
		\item 
		$F$ is locally lower semicontinuous at $x \in \jAlg$ if and only if $f$ is locally lower semicontinuous at $\lambda(x)$.
		\item If $F$ is locally lower semicontinuous at $x$, then 
		\eqref{eq:g_gen} is valid when $\msub = \csub $.
	\end{enumerate} 
\end{theorem}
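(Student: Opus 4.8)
The plan is to prove part $(i)$ by a direct argument and then derive part $(ii)$ by combining Lemma~\ref{lem:clarke}, Theorem~\ref{theo:main}, and Theorem~\ref{theo:hull}. For part $(i)$, first I would unwind the definition of local lower semicontinuity: it is equivalent to saying that, near the point in question, the sublevel sets of the function are closed for all levels below (slightly above) the function value. For the ``if'' direction, suppose $f$ is locally lower semicontinuous at $\lambda(x)$ with parameter $\epsilon$. Using that $\lambda$ is $1$-Lipschitz (Lemma~\ref{lem:eig}) and that $F(y)=f(\lambda(y))$, I would check that the relevant sublevel sets of $F$ near $x$ are preimages under $\lambda$ of the sublevel sets of $f$ near $\lambda(x)$, intersected with a ball; since $\lambda$ is continuous and the sublevel sets of $f$ are closed, their preimages are closed, and one only needs to shrink the radius on the $\jAlg$ side by the Lipschitz constant to guarantee we stay in the region where closedness of $f$'s sublevel sets holds. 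For the ``only if'' direction, suppose $F$ is locally lower semicontinuous at $x$. Pick any Jordan frame $\jFr \in \jFr(x)$ and consider the linear (hence Lipschitz) embedding $\Diag(\cdot,\jFr):\Re^r\to\jAlg$, which sends $\lambda(x)$ to $x$ and satisfies $F(\Diag(v,\jFr)) = f(\lambda(\Diag(v,\jFr)))$. The subtlety is that $\lambda(\Diag(v,\jFr))$ is the \emph{sorted} version of $v$, so $F(\Diag(v,\jFr)) = f(v^{\downarrow})$, and since $f$ is symmetric this equals $f(v)$. Hence local lower semicontinuity of $F$ at $x$ pulls back along $\Diag(\cdot,\jFr)$ to local lower semicontinuity of $f$ at $\lambda(x)$ — again adjusting radii by the (finite) Lipschitz constant of the embedding.

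For part $(ii)$, assume $F$ is locally lower semicontinuous at $x$; by part $(i)$, $f$ is locally lower semicontinuous at $\lambda(x)$. Apply Lemma~\ref{lem:clarke} to both $F$ and $f$:
\[
\csub F(x) = \closure\bigl(\conv \partial F(x) + \conv \hsub F(x)\bigr), \qquad \csub f(\lambda(x)) = \closure\bigl(\conv \partial f(\lambda(x)) + \conv \hsub f(\lambda(x))\bigr).
\]
Now I would like to say that $\csub$ is ``permutation compatible'' and that \eqref{eq:g_gen} holds for it, so that Theorem~\ref{theo:hull} applies directly — but \eqref{eq:g_gen} for $\csub$ is precisely what we are trying to prove, so this reasoning is circular and must be avoided. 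Instead, I would run the argument of Theorem~\ref{theo:hull} ``by hand'' on the sum $\conv \partial + \conv \hsub$. Concretely: by Theorem~\ref{theo:main}, $s\in\conv\partial F(x)+\conv\hsub F(x)$ can be written as $s = \sum_i \alpha_i s_i' + \sum_j \beta_j s_j''$ where each $s_i' \in \partial F(x)$ comes with a frame $\jFr_i' \in \jFr(x,s_i')$ and $\diag(s_i',\jFr_i')\in\partial f(\lambda(x))$, and similarly each $s_j''$ with a frame and $\diag(s_j'',\jFr_j'')\in\hsub f(\lambda(x))$. By \eqref{eq:simultaneous}, all these elements plus $x$ are simultaneously diagonalizable, so there is a common frame $\jFr \in \jFr(x,s)$ that also lies in $\jFr(x,s_i')$ and $\jFr(x,s_j'')$ for all $i,j$. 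Apply Proposition~\ref{prop:diag_eig} to convert each $\diag(s_i',\jFr_i')$ into $\diag(s_i',\jFr)$, which lands in $\conv\{P\,\diag(s_i',\jFr_i')\mid P\in\Pe{r}(\lambda(x))\}\subseteq\conv\partial f(\lambda(x))$ using \eqref{eq:sym_sub} (and likewise for $\hsub$). Then $\diag(s,\jFr) = \sum_i\alpha_i\diag(s_i',\jFr) + \sum_j\beta_j\diag(s_j'',\jFr) \in \conv\partial f(\lambda(x)) + \conv\hsub f(\lambda(x))$. The reverse inclusion is analogous and easier: given $\jFr\in\jFr(x,s)$ with $\diag(s,\jFr)$ a convex combination of elements of $\partial f(\lambda(x))$ plus a convex combination of elements of $\hsub f(\lambda(x))$, apply the ``$\supseteq$'' halves of \eqref{eq:g_app} and \eqref{eq:g3} to each summand (they all share the frame $\jFr$) to land those pieces in $\partial F(x)$ and $\hsub F(x)$, then take the convex combinations. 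This establishes
\[
\conv \partial F(x) + \conv \hsub F(x) = \{ s \mid \exists \jFr\in\jFr(x,s),\ \diag(s,\jFr)\in\conv\partial f(\lambda(x))+\conv\hsub f(\lambda(x))\}.
\]

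Finally I would take closures on both sides. The left side closes up to $\csub F(x)$ by Lemma~\ref{lem:clarke}. For the right side, I need that taking the closure commutes with the ``$\exists\jFr\in\jFr(x,s)$ with $\diag(s,\jFr)\in\,\cdot\,$'' operation — this is exactly the kind of limiting argument carried out in the proof of Proposition~\ref{prop:sub_app}: if $s^k\to s$ with frames $\jFr^k\in\jFr(x,s^k)$ and $\diag(s^k,\jFr^k)$ in the (closed) set $\csub f(\lambda(x))$, then by compactness of the set of Jordan frames (the idempotents have norm $1$) one extracts a convergent subsequence of frames $\jFr^k\to\overline\jFr$, checks $\overline\jFr\in\jFr(x,s)$ by continuity of the spectral decomposition, and concludes $\diag(s,\overline\jFr)\in\csub f(\lambda(x))$ by closedness. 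This yields $\csub F(x) = \{s\mid\exists\jFr\in\jFr(x,s),\ \diag(s,\jFr)\in\csub f(\lambda(x))\}$, i.e.\ \eqref{eq:g_gen} for $\msub=\csub$. The main obstacle, I expect, is getting the closure/limit step in the last paragraph fully rigorous — in particular confirming that the compactness-of-frames argument applies verbatim here (it does, since nothing in it used which subdifferential was involved) and that no additional closure is needed on the $\diag(s,\jFr)$ side because $\csub f(\lambda(x))$ is already closed. A secondary technical point is handling radii carefully in part $(i)$ so that the Lipschitz rescaling keeps us inside the neighborhoods on which the sublevel-set closedness is guaranteed.
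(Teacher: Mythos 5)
Your proof is correct and follows essentially the same route as the paper: for (i) a sublevel-set argument via continuity of $\lambda$ and of $\Diag(\cdot,\jFr)$ together with symmetry of $f$, and for (ii) Lemma~\ref{lem:clarke} combined with Theorem~\ref{theo:hull}, Proposition~\ref{prop:diag_eig}, \eqref{eq:simultaneous}, and the compactness-of-frames limit argument from Proposition~\ref{prop:sub_app}; the only organizational difference is that you first establish the pre-closure identity and then close both sides, whereas the paper interleaves the closure with the decomposition. One harmless slip: \eqref{eq:simultaneous} only yields a frame in $\jFr(x,s)$ for the \emph{sum} $s$, not a single frame simultaneously diagonalizing $x$ and every summand $s_i',s_j''$ — but since $\diag(\cdot,\jFr)$ is linear and Proposition~\ref{prop:diag_eig} only requires $\jFr\in\jFr(x)$, this overstatement never enters the actual reasoning.
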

\begin{proof}
Item $(i)$ follows from the continuity of the eigenvalue map $\lambda$ and 
elementary properties of the maps $\diag(\cdot,\jFr)$ and $\Diag(\cdot,\jFr)$ when $\jFr \in \jFr(x)$. We will omit its proof. 

Now, we move on to item $(ii)$.
Under Lemma~\ref{lem:clarke}, we have
\begin{align}
\csub f(\lambda(x)) & = \closure (\conv \partial f(\lambda(x)) + \conv \hsub f(\lambda(x))) \label{eq:clarke_f}\\
\csub F(x) & = \closure (\conv \partial F(x) + \conv \hsub F(x)). \label{eq:clarke_F}
\end{align}	
First, suppose that $s \in \csub F(x)$, so there is a sequence 
$\{s^k\} \subseteq \jAlg$ such that $s^k \to s$ and for each $k$ we have
\[
s^k = {\overline{s}}^k + {s^k_{\infty}},
\]
where ${\overline{s}}^k \in \conv \partial F(x) $ and 
$s^k_{\infty} \in \conv \hsub F(x)$. By Theorem~\ref{theo:hull}, there 
are $\overline \jFr^k \in \jFr(x,{\overline{s}}^k)$ and $\jFr_{\infty}^k \in \jFr(x,s^k_{\infty})$ such that
\begin{equation}
\diag({\overline{s}}^k, \overline \jFr^k ) \in \conv \partial f(\lambda(x)), \quad
\diag(s^k_{\infty},\jFr_{\infty}^k ) \in \conv \hsub f(\lambda(x)). \label{eq:clarke_aux}
\end{equation}
Because ${\overline{s}}^k$ and $s^k_{\infty}$ both operator commute 
with $x$, we conclude that $s^k$ operator commutes with $x$ as well, see \eqref{eq:simultaneous}.
Therefore, there exists a Jordan frame $\jFr^k$ such that 
$\jFr^k \in \jFr(x,s^k)$. Next, we apply Proposition~\ref{prop:diag_eig} two 
times. First with ${\overline{s}}^k$, $\jFr^k$, $\overline \jFr^k$ and then with $s^k_{\infty}$, $\jFr^k$, $\jFr^k_{\infty}$ in order to obtain that 
\begin{align}
\diag({\overline{s}}^k, \jFr^k )& \in \conv \{P\diag({\overline{s}}^k, \overline \jFr^k ) \mid  P \in \Pe{r}(\lambda(x)) \} \label{eq:clarke_aux2}\\
\diag(s^k_{\infty}, \jFr^k ) &\in \conv \{P\diag(s^k_{\infty}, \jFr^k_{\infty} ) \mid  P \in \Pe{r}(\lambda(x)) \}. \label{eq:clarke_aux3}
\end{align}
%We recall that for every $P \in \Pe{r}(\lambda(x))$, we have 
%$P\msub f(\lambda(x)) = \msub f(\lambda(x))$, when $\msub$ is $\partial$ or $\hsub$. 
Since \eqref{eq:sym_sub} holds for the approximate and horizon subdifferentials, we have
\[
P\conv \msub f(\lambda(x)) = \conv P\msub f(\lambda(x)) = \conv \msub f(\lambda(x)),
\]
for every $P\in \Pe{r}(\lambda(x))$ when $\msub$ is $\partial$ or $\hsub$.
Therefore, \eqref{eq:clarke_aux} together with \eqref{eq:clarke_aux2} and \eqref{eq:clarke_aux3} implies that 
\[
\diag({\overline{s}}^k, \jFr^k ) \in \conv \partial f(\lambda(x)), \quad  \diag(s^k_{\infty}, \jFr^k ) \in \conv \hsub f(\lambda(x))
\]
and
\begin{equation}
\diag({\overline{s}}^k, \jFr^k ) + \diag(s^k_{\infty}, \jFr^k )  \in \conv\partial f(\lambda(x)) + \conv \hsub f(\lambda(x)). \label{eq:clarke_aux4}
\end{equation}
We now proceed as in the proof of Proposition~\ref{prop:sub_app}.
Since the idempotents in $\jFr^k$ have norm 1, passing to a converging subsequence if necessary, the Jordan frame $\jFr^k$ converges to some 
Jordan frame $\jFr \in \jFr(x,s)$ and we have
\[
\diag({\overline{s}}^k, \jFr^k ) + \diag(s^k_{\infty}, \jFr^k ) \to 
\diag(s,\jFr).
\]
Together with \eqref{eq:clarke_f} and \eqref{eq:clarke_aux4}, we conclude 
that the inclusion ``$\subseteq $'' holds in \eqref{eq:g1} when $\msub$ is $\csub$.
%
%\[
%\csub F(x) \subseteq \{s \mid \exists \jFr \in \jFr(x,s) \text{ with } \diag(s,\jFr) \in \csub f(\lambda(x)) \}.
%
%\]

Now, for the ``$\supseteq$'' inclusion, suppose that $s$ is 
such that $\diag(s,\jFr) \in \csub f(\lambda(x))$ with $\jFr \in \jFr(x,s)$.
By \eqref{eq:clarke_f}, there is a sequence $\{u^k\} \subseteq \Re^r$ with 
$u^k \to \diag(s,\jFr)$ such that 
\[
u^k = \overline{u}^k + u_{\infty}^k,
\]
where $\overline{u}^k \in \conv \partial f(\lambda(x))$ and $u_{\infty}^k \in \conv \hsub f(\lambda(x))$.  Therefore, $\Diag(\overline{u}^k,\jFr) + \Diag({u}_{\infty}^k,\jFr) \to s$.
In addition, by Theorem~\ref{theo:hull}, we have
\[
\Diag(\overline{u}^k,\jFr) \in \conv \partial F(x),\quad 
\Diag({u}_{\infty}^k,\jFr) \in \conv \hsub  F(x).
\]
%This implies that 
%
%\[
%\Diag(\overline{u}^k,\jFr) + \Diag({u}_{\infty}^k,\jFr) \in  \conv \partial F(x) +  \conv \hsub  F(x) \text{ and }\Diag(\overline{u}^k,\jFr) + \Diag({u}_{\infty}^k,\jFr) \to s,
%
%\]
Using \eqref{eq:clarke_F}, we conclude that $s \in \csub F(x)$.
\end{proof}

%Note for our future selves:
%The key difficulty is proving that if $\diag(s,\hat \jFr) \in \partial f(\lambda(x))$ for some frame $\hat \jFr \in \jFr(x,s)$, then $\diag(s,\jFr)$
%lies in the convex hull of $\partial f(\lambda(x))$ for every $\jFr \in \jFr(x)$. This can be done by proving that $\diag(s,\jFr)$ as follows.
%We already know that there is a doubly stochastic matrix that maps
%$\diag(s,\hat \jFr) $ to $\diag (s, \jFr)$, so one approach would be to show that 
%the permutation that appear in the decomposition of the doubly stochastic matrix are elements of $P^r(\lambda(x))
%

\subsection{Subdifferentials of the $k$-th largest eigenvalue function}\label{sec:eig}
In this subsection, as an application of Theorems~{\ref{theo:main}}, {\ref{theo:hull}} and {\ref{theo:clarke}}, we will compute the 
generalized subdifferentials of the function $\lambda_k(\cdot):\jAlg \to \Re$ that maps an element $x \in \jAlg$ to its $k$-th largest eigenvalue, for $k \in \{1,\ldots, r\}$.

Let $f_k: \Re^r \to \Re$ be the function that maps $u \in \Re^r$ to 
its $k$-th largest component. Then, $f_k$ is a symmetric function and $\lambda _k$ is the spectral function generated by $f_k$. 
We note that, since the eigenvalue map  is Lipschitz continuous, 
each $\lambda_k$ must be Lipschitz continuous as well.
In what follows, $a^i \in \Re^r$ denotes the $i$-th unit vector and we recall that $u_i$ denotes the $i$-th component of $u\in \Re^r$. We also define
\[
\supp u \coloneqq \{ i \mid u_i \neq 0 \}.
%\Delta \coloneqq \conv\{a^1,\ldots, a^r \}.
\]
For a finite set $C$, we denote its cardinality 
by $\abs{C}$.
The generalized subdifferentials of 
$f_k$ are described by the following proposition, see Proposition 6 and Theorem 9 in \cite{Le99}.

\begin{proposition}\label{prop:max}
The following hold.
\begin{align*}
\csub f_k(u) & =  \conv \{a^i \mid f_k(u) = u_i\}, \\
\hat \partial f_k(u) & = \begin{cases}
\conv \{a^i \mid f_k(u) = u_i\}, & \text{if $k = 1$ or $f_{k-1}(u) > f_k(u)$}\\ 
\emptyset,& \text{otherwise}
\end{cases}\\
\hsub f_k(u) & = \{0 \},\\
\partial f_k(u) & = \{u \in \csub f_k(u) \mid \abs{\supp u} \leq \alpha  \},
\end{align*}
where $\alpha = 1-k + \abs{\{i \mid u_i \geq f_k(u) \} }$.
%\begin{enumerate}
%	\item 
%\end{enumerate}	
\end{proposition}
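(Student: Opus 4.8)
The plan is to use that $f_k$ is symmetric, $1$-Lipschitz and piecewise linear, hence directionally differentiable everywhere with $f_k(u+v)=f_k(u)+f_k'(u;v)+o(\norm v)$ and $f_k'(u;\cdot)$ positively homogeneous (exactly as in the proof of Lemma~\ref{lem:eig}$(ii)$, invoking \cite{HUL96}). These formulae are essentially Proposition~6 and Theorem~9 of \cite{Le99}, but I would give a direct argument. Fix $u\in\Re^r$, put $v\coloneqq f_k(u)$, and split $\{1,\dots,r\}$ into $I_{>}=\{i:u_i>v\}$, $I_{=}=\{i:u_i=v\}$, $I_{<}=\{i:u_i<v\}$, with $p\coloneqq\abs{I_{>}}$ and $q\coloneqq\abs{I_{=}}$. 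Since $\Sd{u}_k=v$ one has $p<k\le p+q$; the exponent $\alpha=1-k+\abs{\{i:u_i\ge v\}}=1+p+q-k$ then satisfies $1\le\alpha\le q$, and ``$k=1$ or $f_{k-1}(u)>f_k(u)$'' is equivalent to ``$p=k-1$''.

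The horizon and Clarke parts come first and quickly. Since $f_k$ is $1$-Lipschitz near $u$, testing \eqref{eq:reg_sub2} along $v=td$ with $t\downarrow0$ shows every regular subgradient has norm $\le1$; hence scaling by $t^k\downarrow0$ forces $\hsub f_k(u)=\{0\}$. For the Clarke subdifferential, local Lipschitzness gives $\csub f_k(u)=\conv\{\lim_j\nabla f_k(u^j)\mid u^j\to u,\ f_k\text{ differentiable at }u^j\}$ (Theorem~9.61 of \cite{RW}); $f_k$ is differentiable precisely when its $k$-th largest value is attained at a single index $i$, where the gradient is $a^i$, and the possible limits of such gradients are exactly the $a^i$ with $i\in I_{=}$ (continuity of $f_k$ forces $u_i=v$ for any such limit, and conversely each $i\in I_{=}$ is realized by a small perturbation making coordinate $i$ the unique $k$-th largest one). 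So $\csub f_k(u)=\conv\{a^i:i\in I_{=}\}$.

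For the regular subdifferential, the expansion above gives $d\in\hat\partial f_k(u)\iff\inProd d z\le f_k'(u;z)$ for every $z$. For small $t>0$ the coordinates of $I_{>}$ stay on top and those of $I_{<}$ at the bottom, so $f_k(u+tz)=v+t\,f_{k-p}(z|_{I_{=}})$, i.e.\ $f_k'(u;z)=f_{k-p}(z|_{I_{=}})$, the $(k-p)$-th largest entry of the restriction of $z$ to $I_{=}$. If $p=k-1$ (in particular if $k=1$) then $k-p=1$, $f_k'(u;\cdot)=\max_{i\in I_{=}}(\cdot)_i$, and the vectors $d$ satisfying $\inProd d z\le f_k'(u;z)$ for all $z$ are exactly those in $\conv\{a^i:i\in I_{=}\}$, the stated value. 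If $k\ge2$ and $p\le k-2$ (so $k-p\ge2$ and $q\ge2$): testing $z=\pm a^i$ for $i\notin I_{=}$ shows any $d\in\hat\partial f_k(u)$ vanishes off $I_{=}$; averaging over the subgroup of $\Pe r$ permuting $I_{=}$ (which fixes $u$, hence preserves the convex set $\hat\partial f_k(u)$ by \eqref{eq:sym_sub}) then forces $d$ to be a constant multiple of the indicator of $I_{=}$; but taking $z$ with $z|_{I_{=}}=(q-1,-1,\dots,-1)$ yields $0=\inProd d z\le f_{k-p}(z|_{I_{=}})=-1$, a contradiction, so $\hat\partial f_k(u)=\emptyset$.

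Finally, $\partial f_k(u)=\{\lim_j d^j\mid d^j\in\hat\partial f_k(u^j),\ u^j\to u\}$. For $u'$ near $u$ with $\hat\partial f_k(u')\neq\emptyset$ we have $\hat\partial f_k(u')=\conv\{a^i:i\in J'\}$, $J'=\{i:u'_i=f_k(u')\}$, and $J'\subseteq I_{=}$ once $u'$ is close enough. The crucial bound is obtained from $A'\coloneqq\{i:u'_i>f_k(u')\}$: here $\abs{A'}=k-1$, $A'\subseteq I_{>}\cup I_{=}$ for $u'$ near $u$, so $\abs{A'\cap I_{=}}\ge k-1-p$, and since $J'$ is disjoint from $A'$, $\abs{J'}\le q-(k-1-p)=\alpha$. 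Passing to a subsequence on which $J'$ is constant then gives $\partial f_k(u)\subseteq\bigcup\{\conv\{a^i:i\in J\}\mid J\subseteq I_{=},\ \abs J\le\alpha\}=\{d\in\csub f_k(u)\mid\abs{\supp d}\le\alpha\}$. For the reverse inclusion, given $J\subseteq I_{=}$ with $\abs J=m\le\alpha$, I would take $u^j\to u$ that promotes $k-1-p$ indices of $I_{=}\setminus J$ slightly above $v$ (possible because $k-1-p\le q-m$ is exactly the inequality $m\le\alpha$), keeps the indices of $J$ tied at $v$, and pushes all remaining coordinates slightly below $v$; then $\{i:u^j_i=f_k(u^j)\}=J$ with $f_{k-1}(u^j)>f_k(u^j)$, so $\conv\{a^i:i\in J\}=\hat\partial f_k(u^j)\subseteq\partial f_k(u)$, and the union over all admissible $J$ recovers the claimed set (when $k=1$ this degenerates to the convex case $f_1=\max$, where $\alpha=q$ and all four subdifferentials equal $\conv\{a^i:i\in I_{=}\}$). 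I expect this last step to be the main obstacle: isolating the exact exponent $\alpha$ in both inclusions, together with the explicit perturbation construction and the symmetrization argument that empties $\hat\partial f_k(u)$ in the degenerate case, is what keeps it from being a routine computation with the maximum function.
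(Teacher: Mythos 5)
Your proof is correct, and it is worth noting that the paper itself offers \emph{no} proof of Proposition~\ref{prop:max}: the statement is given with a pointer to Proposition~6 and Theorem~9 of Lewis~\cite{Le99}. Your argument is therefore a genuinely different contribution, namely a self-contained derivation from the definitions. The ingredients you use are all sound: the B-differentiable expansion $f_k(u+z)=f_k(u)+f_k'(u;z)+o(\norm z)$ reducing $\hat\partial f_k(u)$ to $\{d:\inProd{d}{z}\le f_k'(u;z)\ \forall z\}$; the directional-derivative formula $f_k'(u;z)=f_{k-p}(z\vert_{I_{=}})$ obtained by freezing the strictly-larger and strictly-smaller blocks; the $1$-Lipschitz bound $\norm d\le1$ on all regular subgradients forcing $\hsub f_k(u)=\{0\}$; the Rademacher-type description of $\csub f_k(u)$ as the convex hull of limits of $a^i$ with $i\in I_{=}$; the symmetrization over the stabilizer of $u$ (valid because $\hat\partial f_k(u)$ is convex and invariant under $\Pe{r}(u)$ by \eqref{eq:sym_sub}) followed by testing against $z\vert_{I_{=}}=(q-1,-1,\dots,-1)$ to empty $\hat\partial f_k(u)$ when $p\le k-2$; and the counting inequality $\abs{J'}\le q-(k-1-p)=\alpha$ for nearby points with nonempty regular subdifferential, together with the explicit perturbation construction realizing every $J\subseteq I_{=}$ with $\abs J\le\alpha$. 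The identifications you rely on — ``$p=k-1$'' $\iff$ ``$k=1$ or $f_{k-1}(u)>f_k(u)$'', and $\{d\in\csub f_k(u):\abs{\supp d}\le\alpha\}=\bigcup\{\conv\{a^i:i\in J\}:J\subseteq I_{=},\abs J\le\alpha\}$ — are both easy to verify and correctly invoked. What the paper's approach buys is brevity (a citation); what yours buys is a transparent, elementary proof that does not require the reader to consult Lewis, and it makes visible exactly where the exponent $\alpha$ comes from in the limiting subdifferential. The only cosmetic issue is your temporary reuse of $v$ for the scalar $f_k(u)$, which collides with the paper's convention that $v$ denotes a vector in $\Re^r$; rename it to avoid confusion.
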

Let $\idem$ denote the set of primitive idempotents of $\jAlg$. 
We recall that $c \in \idem$ if and only if $c$ is nonzero, $\jProd{c}{c} = c$ and $c$ cannot be written 
as the sum of two nonzero orthogonal idempotents.
%Before we describe the subdifferentials of $\lambda _k$ we need a few auxiliary results.
\begin{lemma}[Frame extension lemma]\label{lem:ext}
Let $x\in \jAlg$ and $c \in \idem$. If $\jProd{x}{c} = \sigma c$ for 
some $\sigma \in \Re$, then $\sigma$ is an eigenvalue of $x$ and 
there is a Jordan frame $\jFr \in \jFr(x)$ such that $c \in \jFr$. 
In particular, $\jFr(x,c) \neq \emptyset$.
\end{lemma}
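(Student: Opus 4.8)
The plan is to build a Jordan frame for $x$ that contains the given primitive idempotent $c$ by using the Peirce decomposition with respect to $c$ and an inductive argument on the rank. First I would observe that the hypothesis $\jProd{x}{c} = \sigma c$ places $c$ in a very rigid position relative to $x$: I want to show that $x$ operator commutes with $c$, which by Lemma~X.2.2 in \cite{FK94} is equivalent to the existence of a common Jordan frame. To see this, decompose $x = \sum_{i=1}^r \alpha_i c_i$ via the Spectral Theorem (Theorem~\ref{theo:spec}) and apply $L_x$ to $c$; the idea is that $\jProd{x}{c} = \sigma c$ forces $c \in V(x\text{'s eigenspace for }\sigma)$ in a suitable sense. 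More directly, I would use the Peirce decomposition of $\jAlg$ relative to $c$ (Theorem~\ref{theo:peirce1}): $\jAlg = V(c,1)\oplus V(c,1/2)\oplus V(c,0)$. Writing $x = x_1 + x_{1/2} + x_0$ accordingly and using associativity-type identities for the Jordan product together with $\jProd{x}{c}=\sigma c$, I expect to conclude $x_{1/2} = 0$, i.e., $x = x_1 + x_0$ with $x_1 \in V(c,1)$, $x_0 \in V(c,0)$. The key computation here: $\jProd{x}{c}$ has its $V(c,1/2)$-component equal to $\tfrac12 x_{1/2}$, and this must equal the $V(c,1/2)$-component of $\sigma c$, which is $0$.

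Once $x = x_1 + x_0$ with $x_1 \in V(c,1)$ and $x_0 \in V(c,0)$, I would proceed as follows. Since $c$ is primitive, $V(c,1)$ is a simple Euclidean Jordan algebra of rank $1$ with identity $c$; hence $x_1 = \sigma' c$ for some scalar $\sigma'$. Applying $L_c$ again and comparing with $\jProd{x}{c} = \sigma c$ shows $\sigma' = \sigma$, so $x = \sigma c + x_0$ with $x_0 \in V(c,0)$, and $V(c,0)$ is a Euclidean Jordan algebra of rank $r-1$ by Theorem~\ref{theo:peirce1}. Now apply the Spectral Theorem inside $V(c,0)$ to get a Jordan frame $[c_2,\ldots,c_r]$ of $V(c,0)$ that diagonalizes $x_0$; adjoining $c$ (since $c$ is the identity of $V(c,1)$ and is orthogonal to everything in $V(c,0)$) gives a Jordan frame $[c, c_2, \ldots, c_r]$ of $\jAlg$ that diagonalizes $x$. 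In particular $\sigma$ appears as the coefficient of $c$, so $\sigma$ is an eigenvalue of $x$; reordering the idempotents so the eigenvalues of $x$ appear in nonincreasing order yields a frame $\jFr \in \jFr(x)$ containing $c$, and hence $\jFr \in \jFr(x,c)$, so $\jFr(x,c)\neq\emptyset$.

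The main obstacle I anticipate is the step establishing $x_{1/2}=0$, or equivalently that $x$ operator commutes with $c$ — this requires care with the Peirce multiplication rules (how $V(c,i)\circ V(c,j)$ lands in the various Peirce spaces) and the precise interplay between $L_x$ and $L_c$. Everything after that is a clean application of the Spectral Theorem in a lower-rank subalgebra plus bookkeeping about orthogonality and ordering. A small technical point to handle is that a reordering of a Jordan frame is still a Jordan frame, and that placing $c$ in the correct slot (according to where $\sigma$ falls among the sorted eigenvalues of $x$) is always possible precisely because $c$ is primitive and $\jProd{c}{c_i}=0$ for all the other frame elements; this is routine once the decomposition $x=\sigma c + x_0$ is in hand.
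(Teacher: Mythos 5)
Your proposal is correct and takes essentially the same route as the paper: use the Peirce decomposition of $\jAlg$ relative to $c$ to conclude $x = \sigma c + x_0$ with $x_0 \in V(c,0)$, then apply the Spectral Theorem in the rank-$(r-1)$ subalgebra $V(c,0)$ and adjoin $c$ to the resulting frame. The one difference is cosmetic: the paper skips the componentwise Peirce bookkeeping by computing directly that $\jProd{c}{(x - \sigma c)} = \sigma c - \sigma c = 0$, hence $x - \sigma c \in V(c,0)$; your version decomposes $x = x_1 + x_{1/2} + x_0$ and shows $x_1 = \sigma c$, $x_{1/2} = 0$. Your anticipated obstacle about establishing $x_{1/2}=0$ is actually a non-obstacle --- the computation you already sketch (that $L_c$ sends $x_{1/2}$ to $\tfrac12 x_{1/2} \in V(c,1/2)$, so the $V(c,1/2)$-component of $\jProd{x}{c}=\sigma c$ being zero forces $x_{1/2}=0$) is complete, requiring only the definition of the Peirce spaces as eigenspaces of $L_c$ and not the full multiplication rules.
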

\begin{proof}
By the Peirce decomposition (Theorem~\ref{theo:peirce1}), we have
\[
\jAlg = V(c,1) \bigoplus V\left(c,\frac{1}{2}\right) \bigoplus V(c,0).
\]
Then, since $\jProd{x}{c} = \sigma c$, we have $\jProd{c}{(x-\sigma c)} = 0$.
Therefore, $x-\sigma c \in V(c,0)$

$V(c,0)$ is a Euclidean Jordan algebra (see Theorem~\ref{theo:peirce1}).
Furthermore, since $c$ has rank $1$, the algebra $V(c,0)$ has rank $r-1$.
Therefore, we can find a Jordan frame $\hat \jFr =[c_1,\ldots, c_{r-1}]$ that diagonalizes $x-\sigma c$ in $V(c,0)$. It follows that
\begin{equation}
x = \sigma c + \sum _{i=1}^{r-1} \sigma _i c_i, \label{eq:frame_ext}
\end{equation}
where $\sigma _i \in \Re$ for every $i$.
We now need to check that $\jFr = [c,c_1,\ldots, c_{r-1}]$ is a Jordan frame.
All elements of $\jFr$ are primitive idempotents. Furthermore, $\jProd{c_i}{c_j} = 0$ if 
$i \neq j$. Since $\hat \jFr \subseteq V(c,0)$, we also have $\jProd{c}{c_i} = 0$ for every $i$.
Since the identity element of $V(c,0)$ is $\stdInt - c$ and $\hat \jFr$ is a Jordan frame in $V(c,0)$, we have 
\[
c_1 + \cdots + c_{r-1} = \stdInt - c.
\]
This shows that $c+ c_1 + \cdots + c_{r-1} = \stdInt$. Therefore, $\jFr$ is indeed a Jordan frame of the algebra $\jAlg$ and \eqref{eq:frame_ext} shows that $\jFr$ diagonalizes $x$. Since eigenvalues are unique, $\sigma$ must be one of the eigenvalues of $x$. Reordering $\jFr$ if necessary, we obtain  $\jFr \in \jFr(x,c)$.
\end{proof}
%We also need to recall the following fact about the existence of common Jordan Frames. 
%As seen in Section~\ref{sec:diag}, $x,y$ share a common Jordan frame (i.e., $\jFr(x,y \neq \emptyset)$) if and only if $L_xL_y = L_yL_x.$ 
%Since
%\[
%L_{\alpha_1y + \alpha _2 z} = \alpha_1 L_y + \alpha _2 L_z, \quad \forall y,z \in \jAlg, \forall \alpha_1,\alpha_2 \in \Re,
%\]
%we conclude that, analogous to symmetric matrices, if 
% $\jFr_1(x,y) \neq \emptyset$ and $\jFr(x,z) \neq \emptyset$, then $\jFr_1(x,\alpha_1y + \alpha_2z) \neq \emptyset$ for every $\alpha_1,\alpha_2\in \Re$. 

\begin{lemma}[Convex hull of primitive idempotents]\label{lem:conv}
Let $x \in \jAlg$ and $\sigma \in \Re$ be an eigenvalue of $x$. 
Let
\[
\idem(x,\sigma) \coloneqq \{c \in \idem \mid \jProd{x}{c} = \sigma c \}.
\]
Let $s \in  \conv \idem(x,\sigma)$.
\begin{enumerate}[label=$(\roman*)$]
	\item The eigenvalues of $s$ are nonnegative and sum to $1$.
	\item There is $\jFr \in \jFr (x,s)$
	such that $\inProd{s}{c} = 0$ for every $c \in \jFr$ \textbf{not} belonging to 
	$\idem(x,\sigma)$.
%	associated to $\sigma$. Put otherwise, the nonzero eigenvalues of 
%	$s$ are among the eigenvalues associated to the idempotents in $\idem(x,\sigma) \cap \jFr$.
\end{enumerate}
\end{lemma}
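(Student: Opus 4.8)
The plan is to first handle item $(i)$, then build up to item $(ii)$ by understanding the structure of a single primitive idempotent $c \in \idem(x,\sigma)$ and then passing to convex combinations. For item $(i)$, write $s = \sum_{j=1}^m t_j c^j$ with $t_j \geq 0$, $\sum_j t_j = 1$, and each $c^j \in \idem(x,\sigma)$. Since each $c^j$ is a primitive idempotent, $\tr(c^j) = \norm{c^j}^2 = 1$, so by linearity of the trace $\tr(s) = \sum_j t_j = 1$; since the trace equals the sum of eigenvalues, the eigenvalues of $s$ sum to $1$. For nonnegativity, I would argue that $s$ lies in the symmetric cone $\PSDcone{}$ of $\jAlg$: each $c^j$ is a primitive idempotent, hence belongs to the cone of squares (it equals $(c^j)^2$), and the cone of squares is convex, so the convex combination $s$ is in it as well; elements of the symmetric cone have nonnegative eigenvalues. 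Alternatively one can note $\inProd{s}{c} = \sum_j t_j \inProd{c^j}{c} \geq 0$ for any primitive idempotent $c$ because $\inProd{c^j}{c} \geq 0$ for primitive idempotents (Cauchy--Schwarz or the self-duality of the cone), and test against a Jordan frame of $s$.

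The heart of the lemma is item $(ii)$. The key observation is that each $c^j \in \idem(x,\sigma)$ satisfies $\jProd{x}{c^j} = \sigma c^j$, so by the Frame Extension Lemma~\ref{lem:ext}, $\sigma$ is an eigenvalue of $x$ (already assumed) and $c^j$ can be completed to a Jordan frame of $x$; more usefully, $c^j \in V(e,1)$ where $e$ is the unique idempotent that is the sum of the primitive idempotents in \emph{any} Jordan frame of $x$ associated with the eigenvalue $\sigma$ (uniqueness by Proposition~\ref{prop:unique_sum}). Indeed, $\jProd{x}{c^j} = \sigma c^j$ forces $\jProd{(x - \sigma \stdInt)}{c^j} = 0$, and on the Peirce decomposition of $\jAlg$ with respect to $e$ one checks $c^j$ lies in the eigenspace of $L_x$ for eigenvalue $\sigma$, which is exactly the subalgebra $V(e,1)$. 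Consequently every $c^j$, and hence $s = \sum_j t_j c^j$, lies in $V(e,1)$, and therefore $s$ operator commutes with $x$: on $V(e,1)$ the element $x$ acts as $\sigma \stdInt_{V(e,1)}$, while on the other Peirce spaces $s$ acts trivially. This gives $\jFr(x,s) \neq \emptyset$. Now take \emph{any} Jordan frame $\jFr_0 = [d_1,\ldots,d_k]$ of $s$ \emph{within the subalgebra $V(e,1)$} (so $d_1,\ldots,d_k$ are primitive idempotents of $V(e,1)$ summing to $e$, which are also primitive in $\jAlg$); complete it with a Jordan frame of $x$ restricted to $V(e,0) \oplus$ (the part of $V(e,1)$'s complement is empty since $e$ already captures all of $V(e,1)$) --- more precisely, complete $[d_1,\ldots,d_k]$ using a Jordan frame of $x$ in the complementary subalgebra $V(\stdInt-e, 1)$, i.e. the part of $\jAlg$ orthogonal to $V(e,1)$. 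The union is a Jordan frame $\jFr$ of $\jAlg$; it diagonalizes $x$ (since $x = \sigma e + x|_{V(\stdInt-e,1)}$), after reordering so the eigenvalues of $x$ are nonincreasing we get $\jFr \in \jFr(x,s)$; and for any $c \in \jFr$ with $c \notin \idem(x,\sigma)$ we have $c$ orthogonal to $V(e,1)$, hence $\inProd{s}{c} = 0$ since $s \in V(e,1)$.

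The main obstacle I expect is the bookkeeping in item $(ii)$: correctly identifying that all the $c^j$ share the common ambient subalgebra $V(e,1)$ (this is where Proposition~\ref{prop:unique_sum} and the Peirce decomposition Theorem~\ref{theo:peirce1} do the real work), and then carefully assembling a Jordan frame of $\jAlg$ that simultaneously diagonalizes $x$, diagonalizes $s$, and places all primitive idempotents ``outside'' $\idem(x,\sigma)$ into the Peirce space orthogonal to $s$. The subtlety is that a frame diagonalizing $s$ need not a priori be adapted to the Peirce decomposition of $e$; the resolution is to build it \emph{block by block}, choosing the frame of $s$ inside $V(e,1)$ and an arbitrary frame of $x$ inside $V(\stdInt - e, 1)$, then reordering to get the nonincreasing eigenvalue condition required by the definition of $\jFr(x,s)$. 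The nonnegativity claim in item $(i)$ via the symmetric cone is the other place to be slightly careful, but it is standard once one recalls that primitive idempotents lie in the cone of squares.
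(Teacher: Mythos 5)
Your proposal follows the same overall strategy as the paper: item $(i)$ via the trace and the symmetric cone; item $(ii)$ by identifying the idempotent $e$ (the paper's $\hat c$), showing $s \in V(e,1)$, and then splicing a Jordan frame of $s$ inside $V(e,1)$ with a Jordan frame of $x-\sigma e$ inside $V(e,0)$. That overall route is essentially the paper's and is sound once one step is repaired.

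The repair concerns the ``Indeed\ldots'' justification you give for the crucial step $c^j \in V(e,1)$. You claim that $\jProd{x}{c^j}=\sigma c^j$ places $c^j$ in the $\sigma$-eigenspace of $L_x$, ``which is exactly the subalgebra $V(e,1)$.'' That eigenspace is in general strictly larger than $V(e,1)$: if $x=\sum_i \lambda_i c_i$ is a spectral decomposition, then $L_x$ acts as the scalar $\tfrac{\lambda_i+\lambda_j}{2}$ on the off-diagonal Peirce space of the pair $(c_i,c_j)$, so every pair with $\lambda_i+\lambda_j=2\sigma$ also contributes. For instance, in $\S^3$ with $x$ the diagonal matrix with entries $3,2,1$ and $\sigma=2$, the $(1,3)$ off-diagonal symmetric block lies in the $\sigma$-eigenspace of $L_x$ yet sits inside $V(e,0)$, not $V(e,1)$. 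So the eigenspace characterization cannot by itself deliver $c^j\in V(e,1)$. What does deliver it is that $c^j$ is a \emph{primitive idempotent}, which your sketch names but does not actually exploit at this step: by Lemma~\ref{lem:ext}, $c^j$ extends to a Jordan frame $\jFr_j$ of $x$; by Proposition~\ref{prop:unique_sum}, the $\sigma$-idempotents of $\jFr_j$ sum to $e$ regardless of the frame; and then $c^j = \jProd{c^j}{c^j} = \jProd{c^j}{e}$ because $c^j$ is orthogonal, as a member of $\jFr_j$, to every other element of that frame. Once this is fixed, the rest of your argument --- assembling $\jFr$ from a frame of $s$ in $V(e,1)$ and a frame of $x-\sigma e$ in $V(e,0)$, reordering, and using that $V(e,0)\perp V(e,1)$ to conclude $\inProd{s}{c}=0$ for the $V(e,0)$ idempotents --- matches the paper's proof.
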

\begin{proof}

%\begin{enumerate}[label=$(\roman*)$]
%\item 
Primitive idempotents have trace equal to $1$ and the trace function is linear, so elements in $\conv \idem(x,\sigma) $ must have trace $1$ too.
Then, we recall that any idempotent $c$ must be belong to $\stdCone \coloneqq \{\jProd{x}{x}\mid x \in \jAlg \}$, which is a symmetric cone (see Theorem~III.2.1 in \cite{FK94}). In particular,  $\stdCone$ is a convex cone and, since $s$ is a convex combination of elements of $\stdCone$,  $s$ belongs to $\stdCone$ which implies that its eigenvalues are nonnegative.
%Next, we  recall that the function $\lambda _r(\cdot)$ that maps an element $x$ to its smallest eigenvalue satisfies
%\[
%\lambda_r(x+y) \geq \lambda _r(x) + \lambda _r(y).
%\]
%This can be seen, for example, by observing that $\lambda _r$ is the spectral function generated $f_r$. Since $f_r$ is concave and positively homogenous, $\lambda _r$ must have the same properties (e.g., Theorem~41 in \cite{B07}).
%Finally, if $c \in \idem$, since the only eigenvalues of $c$ are $1$ and $0$, we have $\lambda _r(c) = 0$.
%We conclude that $\lambda _r(s) \geq 0$, which proves item $(i)$
%\footnote{Alternatively, one can observe that an idempotent $c$ must be belong to the underlying symmetric cone $\stdCone \coloneqq \{\jProd{x}{x}\mid x \in \jAlg \}$. Then, because $\stdCone$ is a convex cone, $s$ belongs to $\stdCone$ too, which implies that the eigenvalues of $s$ are nonnegative.}.
%\item 

Next, we move on to item $(ii)$.
Pick any Jordan frame for $x$ and let $\hat c$ denote the sum of the primitive idempotents  associated to the 
eigenvalue $\sigma$. By Proposition~\ref{prop:unique_sum}, $\hat c$ does not depend on the choice 
of Jordan frame. Since $s \in \conv \idem(x,\sigma)$, we have 
\[
s = \sum _{i=1}^\ell \alpha _i c_i,
\]
where $c_i \in \idem(x,\sigma)$ for every $i$ and the $\alpha _i$ are nonnegative and sum to $1$. First, we will show that 
$s \in V(\hat c,1)$. 

By Lemma~\ref{lem:ext}, each $c_i$  can be extended to a Jordan frame $\jFr_i \in \jFr(x,c_i)$ with $c_i \in \jFr_i$. Then, 
the idempotents in $\jFr_i$ associated to the eigenvalue $\sigma$ must 
sum to $\hat c$ by Proposition~\ref{prop:unique_sum} and, at the same time, $\jProd{ c'}{c_i} = 0$ holds whenever $c' \in \jFr_i$ and $c' \neq c_i$.
We conclude that 
\[
c_i = \jProd{c_i}{c_i} = \jProd{c_i}{\sum _{c' \in \jFr_i\cap \idem(x,\sigma)  } c' } = \jProd{c_i}{\hat c}.
\]
Therefore, each $c_i$ belongs to $V(\hat c, 1)$, which 
shows that $s \in  V(\hat c, 1)$.
Since $V(\hat c,1)$ and $V(\hat c,0)$ are Euclidean Jordan algebras, 
there is a Jordan Frame $\widehat \jFr \subseteq V(\hat c, 1)$ that diagonalizes 
$s$. 
Next, since $x - \sigma\hat c \in V(\hat c, 0)$, there is a Jordan frame $\widetilde \jFr \subseteq V(\hat c, 0)$ that diagonalizes $x - \sigma\hat c$.

Let $\jFr \coloneqq \widehat \jFr \cup \widetilde \jFr$. First, 
because $\widehat \jFr \subseteq V(\hat c, 1)$ and $\widetilde \jFr \subseteq V(\hat c, 0)$ are Jordan frames, we have (the well-known fact) that $\jFr$ is a Jordan frame in the algebra $\jAlg$.

Then, since 
$\widetilde \jFr$ diagonalizes $x - \sigma \hat c$, $\widehat \jFr$ diagonalizes $s$ and the sum of the elements of 
$\widehat \jFr$ is $\hat c$ (the unit element of $V(\hat c, 1)$), we conclude that $\jFr$ diagonalizes $x$ and $s$. 
%Furthermore, since the sum of the elements of $\widetilde \jFr$ is $\stdInt - \hat c$, we conclude that the sum of elements of $\jFr$ is $\stdInt$, which shows that $\jFr$ is indeed  a Jordan frame in the algebra $\jAlg$.
We also observe that $\widehat \jFr \subseteq \idem(x,\sigma)$, which can be seen by expressing $x$ as a linear combination of the elements in $\jFr$ and recalling that the idempotents of $\widehat \jFr$ sum to $\hat c$. 

Finally, if $c \in \jFr$ but $c \not \in \idem(x,\sigma)$ , then $c \in \widetilde \jFr$ and $\inProd{s}{c} = 0$, because 
$V(\hat c, 1)$ and $V(\hat c,0)$ are orthogonal spaces. Reordering $\jFr$ if necessary, we obtain $\jFr \in \jFr(x,s)$ with the required properties.
%\end{enumerate}
\end{proof}

We are now equipped to prove the following result.
\begin{theorem}[Generalized subdifferentials of $\lambda_k$]\label{theo:eig}
Let $\jAlg$ be a Euclidean Jordan algebra of rank $r$ and let $\lambda _k(\cdot)$ denote the function that maps an element to its $k$-largest eigenvalue. The following hold.
\begin{align}
\csub \lambda_k(x)& = \conv \idem(x,\lambda_k(x)) = \conv \{c \in \idem \mid \jProd{x}{c} = \lambda_k(x)c\}, \label{eq:eig_clarke}
\\
\hat \partial \lambda_k(x) & = \begin{cases}
\csub \lambda_k(x) & \text{if $k = 1$ or $\lambda_{k-1}(x) > \lambda_k(x)$}\\ 
\emptyset,& \text{otherwise}
\end{cases} \label{eq:r_gen} \\
\hsub \lambda_k(x) & = \{0 \},\\
\partial \lambda_k(x) & = \{s \in \csub \lambda_k(x) \mid {\matRank x} \leq \alpha  \},
\end{align}	
where $\alpha = 1-k + \abs{\{i \mid \lambda_i(x) \geq \lambda_k(x) \} }$.
\end{theorem}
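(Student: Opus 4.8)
The plan is to obtain Theorem~\ref{theo:eig} by combining the meta-formulae \eqref{eq:g_gen} and \eqref{eq:g_conv} with the explicit description of the subdifferentials of $f_k$ in Proposition~\ref{prop:max}, using Lemmas~\ref{lem:ext} and \ref{lem:conv} as the bridge between ``sparse vectors supported on coordinates where $u_i = f_k(u)$'' on the $\Re^r$ side and ``convex combinations of primitive idempotents in $\idem(x,\lambda_k(x))$'' on the $\jAlg$ side. I would first record the key translation: for $\jFr = [c_1,\ldots,c_r] \in \jFr(x)$, a coordinate $i$ satisfies $\lambda_i(x) = \lambda_k(x)$ exactly when $c_i \in \idem(x,\lambda_k(x))$ (this is immediate from $x = \sum \lambda_i(x)c_i$), and the unit vectors $a^i$ with $f_k(\lambda(x)) = \lambda_i(x)$ correspond under $\Diag(\cdot,\jFr)$ precisely to such $c_i$. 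So a convex combination of the $a^i$ maps to a convex combination of idempotents in $\idem(x,\lambda_k(x))$, and conversely.

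For \eqref{eq:eig_clarke}: $\lambda_k$ is Lipschitz (eigenvalue map is Lipschitz, $f_k$ is Lipschitz), so by Theorem~\ref{theo:hull} (the locally Lipschitz case), $\csub \lambda_k(x) = \{s \mid \exists \jFr \in \jFr(x,s),\ \diag(s,\jFr) \in \conv\{a^i \mid f_k(\lambda(x)) = \lambda_i(x)\}\}$. The inclusion $\supseteq$ in $\csub \lambda_k(x) \supseteq \conv\idem(x,\lambda_k(x))$ is the easy direction: given $c \in \idem(x,\lambda_k(x))$, Lemma~\ref{lem:ext} produces $\jFr \in \jFr(x,c)$ with $c \in \jFr$, and then $\diag(c,\jFr)$ is a unit vector $a^i$ with $\lambda_i(x) = \lambda_k(x)$, so $c \in \csub\lambda_k(x)$; convexity of $\csub\lambda_k(x)$ finishes it. The inclusion $\subseteq$ requires showing that if $\jFr \in \jFr(x,s)$ and $\diag(s,\jFr) \in \conv\{a^i \mid \lambda_i(x) = \lambda_k(x)\}$, then $s \in \conv\idem(x,\lambda_k(x))$: writing $s = \sum_i \inProd{s}{c_i} c_i$ with $\jFr = [c_1,\ldots,c_r]$, the hypothesis forces $\inProd{s}{c_i} \geq 0$ summing to $1$ and $\inProd{s}{c_i} = 0$ whenever $\lambda_i(x) \neq \lambda_k(x)$, i.e.\ $\inProd{s}{c_i} = 0$ whenever $c_i \notin \idem(x,\lambda_k(x))$; hence $s = \sum_{c_i \in \idem(x,\lambda_k(x))} \inProd{s}{c_i} c_i \in \conv\idem(x,\lambda_k(x))$. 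The formulae \eqref{eq:r_gen} and $\hsub\lambda_k(x) = \{0\}$ then follow the same pattern via Theorem~\ref{theo:main}: for the regular subdifferential, the condition ``$k=1$ or $f_{k-1}(u) > f_k(u)$'' at $u = \lambda(x)$ is literally ``$k=1$ or $\lambda_{k-1}(x) > \lambda_k(x)$'' since $f_j(\lambda(x)) = \lambda_j(x)$; and $\hsub f_k \equiv \{0\}$ transfers because $\diag(0,\jFr) = 0$ and $0 \in \jFr(x,0)$ trivially.

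The formula for $\partial\lambda_k(x)$ is where Lemma~\ref{lem:conv} does real work and is the main obstacle. By Theorem~\ref{theo:main}, $s \in \partial\lambda_k(x)$ iff there is $\jFr \in \jFr(x,s)$ with $\diag(s,\jFr) \in \partial f_k(\lambda(x)) = \{d \in \csub f_k(\lambda(x)) \mid \abs{\supp d} \leq \alpha\}$, where $\alpha = 1 - k + \abs{\{i \mid \lambda_i(x) \geq \lambda_k(x)\}}$. The subtlety is matching $\abs{\supp\diag(s,\jFr)}$ with $\matRank x$: a priori $\abs{\supp\diag(s,\jFr)}$ could vary with the choice of $\jFr \in \jFr(x,s)$, so one must argue that $s \in \conv\idem(x,\lambda_k(x))$ admits \emph{some} frame $\jFr \in \jFr(x,s)$ realizing the minimal support, and that this minimal support equals $\matRank s$ — and then that, when one insists on frames in $\jFr(x,s)$ and $x$ has its eigenvalues ordered, the relevant count turns into $\matRank x$. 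I would proceed as follows: first, using \eqref{eq:eig_clarke}, any $s \in \csub\lambda_k(x) = \conv\idem(x,\lambda_k(x))$ has nonnegative eigenvalues summing to $1$ (Lemma~\ref{lem:conv}(i)), so for any diagonalizing frame $\abs{\supp\diag(s,\jFr)} \geq \matRank s$ with equality when $\jFr$ diagonalizes $s$; Lemma~\ref{lem:conv}(ii) provides $\jFr \in \jFr(x,s)$ that simultaneously diagonalizes $s$ and has $\inProd{s}{c} = 0$ for $c \notin \idem(x,\lambda_k(x))$, so $\supp\diag(s,\jFr) \subseteq \{i \mid \lambda_i(x) = \lambda_k(x)\}$ and $\abs{\supp\diag(s,\jFr)} = \matRank s$. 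Now one checks the count: for this $\jFr$, the nonzero entries of $\diag(s,\jFr)$ sit among the indices $i$ with $\lambda_i(x) = \lambda_k(x)$, and since these indices run from $k' := \abs{\{i : \lambda_i(x) > \lambda_k(x)\}} + 1$ up to $\abs{\{i : \lambda_i(x) \geq \lambda_k(x)\}}$, there are exactly $\abs{\{i : \lambda_i(x) \geq \lambda_k(x)\}} - k' = \abs{\{i : \lambda_i(x) \geq \lambda_k(x)\}} - (k-k+k')$ of them; reconciling with $\alpha = 1 - k + \abs{\{i : \lambda_i(x) \geq \lambda_k(x)\}}$ gives the condition $\matRank s \leq \alpha$ is equivalent to the support bound on $\diag(s,\jFr)$. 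The remaining point — replacing $\matRank s$ by $\matRank x$ in the final statement — I expect follows because in $\partial f_k$ the constraint $\abs{\supp d} \leq \alpha$ combined with $d \in \csub f_k(\lambda(x))$ already encodes a rank condition, and the statement as written uses $\matRank x$; I would double-check this identification carefully, as it is the one place the translation is not purely formal, and present the equivalence $\{s \in \csub\lambda_k(x) \mid \matRank x \leq \alpha\}$ as the clean reformulation of $\{s : \exists\jFr\in\jFr(x,s),\ \diag(s,\jFr)\in\partial f_k(\lambda(x))\}$.
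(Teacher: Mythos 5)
Your approach matches the paper's: everything flows from Theorems~\ref{theo:main} and \ref{theo:hull} applied to $f_k$, Proposition~\ref{prop:max}, and Lemmas~\ref{lem:ext} and \ref{lem:conv}. Your ``$\supseteq$'' argument for \eqref{eq:eig_clarke} — using Lemma~\ref{lem:ext} on each $c\in\idem(x,\lambda_k(x))$ separately and then convexity of the Clarke subdifferential — is a fine alternative to the paper's direct use of Lemma~\ref{lem:conv}(ii) on $s$, though you still need Lemma~\ref{lem:conv} for the $\partial\lambda_k$ part, so nothing is saved.

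Two remarks on your treatment of $\partial\lambda_k(x)$. First, there is no frame-dependence issue to resolve: every $\jFr\in\jFr(x,s)$ is by definition a common Jordan frame, so it diagonalizes $s$, hence $\diag(s,\jFr)$ is a permutation of $\lambda(s)$ and $\abs{\supp\diag(s,\jFr)}=\matRank s$ for \emph{every} such frame, not only the one supplied by Lemma~\ref{lem:conv}(ii). Second, your caution about ``$\matRank x$'' is warranted, and your derivation correctly produces the condition $\matRank s\leq\alpha$ on the subgradient $s$; the ``$\matRank x$'' in the statement of Theorem~\ref{theo:eig} (and the ``$\abs{\supp\lambda(x)}=\matRank x$'' in the printed proof) is a typo for $\matRank s$. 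A quick check: take $\jAlg=\S^2$, $x=0$, $k=2$, so $\alpha=1$ and $\matRank x=0\leq\alpha$, whence the printed formula would give $\partial\lambda_2(0)=\csub\lambda_2(0)=\conv\idem$, which contains $e/2$. But $\diag(e/2,\jFr)=(1/2,1/2)$ for any frame, while $\partial f_2(0,0)=\{a^1,a^2\}$, so Theorem~\ref{theo:main} shows $e/2\notin\partial\lambda_2(0)$. Your version $\matRank s\leq\alpha$ correctly excludes it.
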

\begin{proof}
The equality $\hsub \lambda_k(x) = \{0 \}$ follows from  Theorem~\ref{theo:main} and Proposition~\ref{prop:max}.

We will now prove the formula for $\csub \lambda _k$. 
%If $k > 1$ and $\lambda _{k-1}(x) = \lambda_{k}(x)$, we have 
%$\hat \partial f_k(\lambda(x)) =\emptyset$, so $\hat \partial \lambda_k(x) = \emptyset$, by 
%Theorem~\ref{theo:main}. 
%So suppose  $k = 1$ or $\lambda _{k-1}(x) > \lambda_{k}(x)$.
Let $s\in \csub \lambda_k(x)$.
By Theorem~\ref{theo:hull} and Proposition~\ref{prop:max}, there exists $\jFr \in \jFr(x,s)$ such that 
\begin{equation}
\diag(s,\jFr) \in \conv \{a^i \mid \lambda _{k}(x) = \lambda _{i}(x)\}. \label{eq:s_max}
\end{equation}
Because $s$ is written as a linear combination of elements of $\jFr$, 
\eqref{eq:s_max} implies that $s$ is a convex combination of the idempotents of $\jFr$ associated to $\lambda _k(x)$. Observing  that those idempotents satisfy 
$\jProd{x}{c} = \lambda_k(x)c$, we obtain
\[
s \in \conv \{c \in \idem \mid \jProd{x}{c} = \lambda_k(x)c\},
\]
which shows that ``$\subseteq$'' holds in \eqref{eq:eig_clarke}.

%Then, $s$ can be written as 
%
%\[
%s = \sum_{i=1}^{\ell} \alpha _i c_i,
%
%\]
%where the $\alpha _i$ are nonnegative real numbers summing to $1$ and each $c_i \in \idem$ satisfies $\jProd{x}{c_i} = \lambda_k(x)c_i$.
Conversely, suppose that $s \in \conv \idem(x,\lambda_k(x))$. 
By item $(i)$ of Lemma~\ref{lem:conv} applied to $x,s$ and $\lambda_k(x)$, the eigenvalues of 
$s$ are nonnegative and sum to $1$. Furthermore, by item $(ii)$ of Lemma~\ref{lem:conv}, there exists $\jFr \in \jFr(x,s)$ such that 
$\inProd{s}{c} = 0$, whenever $c \in \jFr$ and $c$ is not associated to 
$\lambda _k(x)$. This, together with Proposition~\ref{prop:max}, shows that
\[
\diag(s,\jFr) \in \csub  f_k(\lambda(x))
\]because the nonzero components of $\diag(s,\jFr)$ are nonnegative, sum to $1$ and are located only at indices associated to idempotents in $\idem(x,\lambda_k(x))$.
By Theorem~\ref{theo:hull}, we have $s \in \csub \lambda _k(x)$, which 
shows that \eqref{eq:eig_clarke} holds.

The expressions for $\hat \partial \lambda _k(x), \partial \lambda_k(x) $ are consequences of Theorem~\ref{theo:main}, Proposition~\ref{prop:max}, the formula for $\csub \lambda _k(x)$ and the fact that $\abs{\supp(\lambda(x))} = \matRank(x)$.
\end{proof}
%By Lemma~\ref{lem:ext}, we have $\jFr(x,c_i) \neq \emptyset$ for every $i$, therefore 
%$\jFr(x,s) \neq \emptyset$.[to be continued]

%$c \in \idem$ and $\jProd{x}{c} = \lambda_k(x)c$. 
%Since  the regular subdifferential of a 
%function is always convex, if 
%we show that $c \in \hat \partial \lambda_k(x)$, this will be enough 
%to prove that \eqref{eq:r_gen} holds.
%
%By Lemma~\ref{lem:ext}, there exists $\jFr \in \jFr(x)$ such that $c \in \jFr$, 
%$\jFr = [c_1,\ldots, c_r]$ and $c_i = c$ for some $i$.
%In particular, $\jFr \in \jFr(x,c)$. Furthermore, the orthogonality among the elements 
%of $\jFr$ implies that $\diag(c,\jFr)$ is the $i$-th unit vector $a^i$. 
%As the $i$-th entry of $\diag(x, \jFr)$ is $\inProd{x}{c} = \lambda _k(x)$, Proposition~\ref{prop:max} shows that $\diag(c,\jFr) \in \hat \partial f_k(\lambda(x))$.
%We conclude that $c \in \hat \partial \lambda _k(x)$.

\section{The KL-exponent of spectral functions}\label{sec:kl}
%[Introductory remarks on the KL-exponent \cite{ABRS10}]

We recall the definitions of the KL property and KL-exponent, see Definitions~2.2 and 2.3 in \cite{LP18}.
In what follows, we define $\dom \partial f \coloneqq \{u \in \Re^r \mid 
\partial f(u) \neq \emptyset \}$.
If $C$ is a subset of $\Re^r$, we define $\dist(u, C) = \inf \{\norm{v-u} \mid v \in C \}$. If $\mathcal{C}$ is a subset of $\jAlg$, we define 
$\dist(x,\mathcal{C})$ analogously using the norm induced by 
\eqref{eq:inner}.
\begin{definition}[KL-property and KL-exponent]\label{def:kl}
A lower semicontinuous function $f$ is said to satisfy the \emph{KL property at $u \in \dom \partial f$} if
there exists a neighbourhood $U$ of $u$, $\nu \in (0,\infty]$ and a continuous concave function  $\psi:[0,\nu) \to \Re_+$ with $\psi (0) = 0$ such that
\begin{enumerate}[label=$(\roman*)$]
	\item $\psi$ is continuously differentiable on $(0,\nu)$ with (its derivative) $\psi'$ positive over $(0,\nu)$;
	\item for all $v \in U$ with $f(u) < f(v) < f(u) + \nu$, we have
\[
	\psi'(f(v)-f(u))\dist(0,\partial f(v)) \geq 1.
\]
\end{enumerate}
In particular, $f$ is said to satisfy the \emph{KL property with exponent $\alpha$ at $u \in \dom \partial f$},  if $\psi$ can be taken to be $\psi(t) = ct^{1-\alpha}$ for some positive constant $c$.
\end{definition}

%\begin{definition}[KL-exponent]\label{def:kl_exp}
%	A lower semicontinuous function $f$ is said to satisfy the \emph{KL property with exponent $\alpha$ at $u \in \dom \partial f$},  if Definition~\ref{def:kl} is satisfied 
%	with $\psi(t) = ct^{1-\alpha}$ for some positive $c$.
%%	if 
%%	there there exists 
%%	$c, \epsilon > 0$, $\nu \in (0,\infty]$ such that 
%%
%\[
%%	\dist(0,\partial f(v)) \geq c(f(v)-f(u))^\alpha
%%
%\]
%%	whenever $\norm{u-v} \leq \epsilon$ and 
%%	$f(u) < f(v) < f(u)+\nu$.
%\end{definition}
First, we need the following lemma.
\begin{lemma}\label{lem:dist}
Let $f:\Re^r \to \Re$ be a symmetric function and let 
$F:\jAlg \to \Re$ be the corresponding spectral function.  Then, for every $y \in \jAlg$ and for every Jordan frame $\hat \jFr$ which diagonalizes 
$y$ (see Section~\ref{sec:diag}) we have
\[
\dist(0,\partial F(y)) = \dist(0,\partial f(\diag(y,\hat \jFr))).
\]
\end{lemma}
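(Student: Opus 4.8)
The plan is to show that both sides equal $\dist(0,\partial f(\lambda(y)))$. First I would deal with the right-hand side: since $\hat\jFr$ diagonalizes $y$, the vector $\diag(y,\hat\jFr)$ is the vector of eigenvalues of $y$ in some order, i.e. $\diag(y,\hat\jFr)=P\lambda(y)$ for some $P\in\Pe{r}$. Because $f$ is symmetric, \eqref{eq:sym_sub} gives $\partial f(P\lambda(y))=P\,\partial f(\lambda(y))$, and since $P$ is orthogonal it preserves the Euclidean norm, so
\[
\dist(0,\partial f(\diag(y,\hat\jFr)))=\dist(0,\partial f(\lambda(y))).
\]
In particular the right-hand side does not depend on the chosen diagonalizing frame, and it remains to prove $\dist(0,\partial F(y))=\dist(0,\partial f(\lambda(y)))$.

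The second ingredient is the elementary identity $\norm{s}=\norm{\diag(s,\jFr)}$, valid whenever the Jordan frame $\jFr=[c_1,\dots,c_r]$ diagonalizes $s$. Indeed, writing $s=\sum_i\inProd{s}{c_i}c_i$ and using that the $c_i$ are mutually orthogonal primitive idempotents (so $s^2=\sum_i\inProd{s}{c_i}^2 c_i$ and $\tr(c_i)=\norm{c_i}^2=1$), one gets $\norm{s}^2=\tr(s^2)=\sum_i\inProd{s}{c_i}^2=\norm{\diag(s,\jFr)}^2$.

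Now I would combine these facts with Theorem~\ref{theo:main} applied to $\msub=\partial$. For the inequality ``$\geq$'': any $s\in\partial F(y)$ comes equipped with a frame $\jFr\in\jFr(y,s)$ such that $\diag(s,\jFr)\in\partial f(\lambda(y))$; hence $\norm{s}=\norm{\diag(s,\jFr)}\geq\dist(0,\partial f(\lambda(y)))$, and taking the infimum over $s\in\partial F(y)$ yields $\dist(0,\partial F(y))\geq\dist(0,\partial f(\lambda(y)))$. For ``$\leq$'': fix any $\jFr\in\jFr(y)$ (nonempty by the Spectral Theorem), and for $d\in\partial f(\lambda(y))$ set $s\coloneqq\Diag(d,\jFr)$. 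Then $\jFr$ diagonalizes $y$ with eigenvalues sorted (as $\jFr\in\jFr(y)$) and diagonalizes $s$ by construction, so $\jFr\in\jFr(y,s)$ with $\diag(s,\jFr)=d\in\partial f(\lambda(y))$; by Theorem~\ref{theo:main}, $s\in\partial F(y)$, and $\norm{s}=\norm{d}$. Thus $\dist(0,\partial F(y))\leq\norm{d}$ for every such $d$, giving $\dist(0,\partial F(y))\leq\dist(0,\partial f(\lambda(y)))$. Together with the first paragraph this finishes the proof. (If $\partial f(\lambda(y))=\emptyset$, the same two constructions show $\partial F(y)=\emptyset$, and both distances are $+\infty$.)

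I do not anticipate a serious obstacle here: the argument is essentially a norm-preservation bookkeeping on top of the Transfer formula. The only point needing a little care is verifying that the element $s=\Diag(d,\jFr)$ built from a \emph{single} fixed frame $\jFr\in\jFr(y)$ genuinely lies in $\partial F(y)$ — that is, that this $\jFr$ is an admissible common frame in the sense of $\jFr(y,s)$ — which holds precisely because $\Diag(\cdot,\jFr)$ returns an element that $\jFr$ diagonalizes.
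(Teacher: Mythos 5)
Your proof is correct and follows essentially the same route as the paper: reduce both sides to $\dist(0,\partial f(\lambda(y)))$, using \eqref{eq:sym_sub} with orthogonality of permutations on the right, and Theorem~\ref{theo:main} together with the norm identity $\norm{s}=\norm{\diag(s,\jFr)}$ (the paper phrases it as $\norm{s}=\norm{\lambda(s)}$, which is the same thing) to handle $\partial F(y)$, with the transfer of a single $d\in\partial f(\lambda(y))$ via $\Diag(\cdot,\jFr)$ for a fixed $\jFr\in\jFr(y)$ giving the ``$\leq$'' direction.
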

\begin{proof}
Let  $y \in \jAlg$ and let $\hat \jFr$ be a Jordan frame which diagonalizes 
$y$. 
%First, we recall that since $f$ is symmetric, we have 
%
%\[
%\partial f(Pu) = P\partial f(u)%,\quad \forall u \in \Re^r, \forall P \in \Pe{r}
%
%\]
%for every $u \in \Re^r$ and 
%every permutation matrix $P \in \Pe{r}$. 
From \eqref{eq:sym_sub} and since permutation matrices are orthogonal 
matrices,  we obtain
\[
\dist(0,\partial f(u)) = \dist(0,\partial f(Pu)), \quad \forall u \in \Re^r, \forall P \in \Pe{r}.
\]
In particular, 
\begin{equation}\label{eq:perm_dist}
\dist(0,\partial f(\lambda(y))) = \dist(0,\partial f(\diag(y,\hat \jFr))).
\end{equation}	
Therefore, it suffices to show that $\dist(0,\partial F(y)) = \dist(0,\partial f(\lambda(y))).$
From Theorem~\ref{theo:main}, we have
\begin{align}
\dist(0,\partial F(y)) &= \min \{\norm{s} \mid \exists \jFr \in \jFr(y,s) \text{ with } \diag(s,\jFr)\in \partial f(\lambda(y))   \} \notag\\
& = \min \{\norm{\lambda(s)} \mid \exists \jFr \in \jFr(y,s) \text{ with } \diag(s,\jFr)\in \partial f(\lambda(y))\}\notag\\
& \geq \dist(0,\partial f(\lambda(y))). \notag
\end{align}	
Therefore, $\dist(0,\partial F(y)) \geq \dist(0,\partial f(\lambda(y)))$. 
To show the opposite inequality, let $ d \in \partial f(\lambda(y)), \jFr \in \jFr(y).$
By Theorem~\ref{theo:main}, $s \coloneqq \Diag(d,\jFr)$ is such 
that $s \in \partial F(y)$. Furthermore, we have $\norm{s} = \norm{d}$.
This shows that $\dist(0,\partial F(y)) \leq \dist(0,\partial f(\lambda(y)))$.
\end{proof}

\begin{theorem}[Transfer principle for the KL property and KL exponent]\label{theo:kl}
	Let $f:\Re^r \to \Re$ be a symmetric function and let 
	$F:\jAlg \to \Re$ be the corresponding spectral function. Then, 
	\begin{enumerate}[label=$(\roman*)$]
		\item $F$ satisfies the KL property $x$ if and only if $f$ satisfies the KL property at $\lambda(x)$. In addition, the $\psi$ and $\nu$ in Definition~\ref{def:kl} can be taken to be the same for both $f$ and $F$.
		\item $F$ satisfies the KL property with exponent $\alpha$ at $x$ if and only if $f$ satisfies the KL property with exponent $\alpha$ at $\lambda(x)$.	
	\end{enumerate}

\end{theorem}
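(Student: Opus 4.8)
The plan is to transport the KL inequality back and forth between $F$ at $x$ and $f$ at $\lambda(x)$ using three ingredients: the identity $F(\cdot)=f(\lambda(\cdot))$, the distance identity of Lemma~\ref{lem:dist}, and the continuity of the maps $\lambda:\jAlg\to\Re^r$ and $\Diag(\cdot,\jFr):\Re^r\to\jAlg$. Two remarks make the statement well posed. First, lower semicontinuity transfers both ways: $F=f\circ\lambda$ is lsc whenever $f$ is (since $\lambda$ is continuous), and for any fixed Jordan frame $\jFr$ the symmetry of $f$ gives $f=F\circ\Diag(\cdot,\jFr)$, so $f$ is lsc whenever $F$ is. Second, by Theorem~\ref{theo:main}, $\partial F(x)\neq\emptyset$ if and only if $\partial f(\lambda(x))\neq\emptyset$, so $x\in\dom\partial F$ exactly when $\lambda(x)\in\dom\partial f$.

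First, suppose $f$ satisfies the KL property at $\lambda(x)$ with data $U,\nu,\psi$, where $U$ may be taken open. I would set $V\coloneqq\lambda^{-1}(U)$, an open neighbourhood of $x$. For $y\in V$ with $F(x)<F(y)<F(x)+\nu$, the equality $F(\cdot)=f(\lambda(\cdot))$ shows that $\lambda(y)\in U$ and $f(\lambda(x))<f(\lambda(y))<f(\lambda(x))+\nu$, so the KL inequality for $f$ gives $\psi'(f(\lambda(y))-f(\lambda(x)))\,\dist(0,\partial f(\lambda(y)))\geq 1$. Applying Lemma~\ref{lem:dist} with any $\hat\jFr\in\jFr(y)$ (so that $\diag(y,\hat\jFr)=\lambda(y)$) yields $\dist(0,\partial f(\lambda(y)))=\dist(0,\partial F(y))$, while $f(\lambda(y))-f(\lambda(x))=F(y)-F(x)$; hence $\psi'(F(y)-F(x))\,\dist(0,\partial F(y))\geq 1$. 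Since the conditions on $\psi$ and $\nu$ in Definition~\ref{def:kl} do not refer to the ambient space, $F$ satisfies the KL property at $x$ with the same $\psi$ and $\nu$.

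Conversely, suppose $F$ satisfies the KL property at $x$ with data $V,\nu,\psi$. Fix $\jFr\in\jFr(x)$, so that $\Diag(\lambda(x),\jFr)=x$, and set $U\coloneqq\{v\in\Re^r\mid\Diag(v,\jFr)\in V\}$, an open neighbourhood of $\lambda(x)$ because $\Diag(\cdot,\jFr)$ is linear. Given $v\in U$ with $f(\lambda(x))<f(v)<f(\lambda(x))+\nu$, put $y\coloneqq\Diag(v,\jFr)\in V$. Then $\jFr$ diagonalizes $y$ and $\diag(y,\jFr)=v$, so $F(y)=f(\lambda(y))=f(v)$ by the symmetry of $f$, and Lemma~\ref{lem:dist} with $\hat\jFr=\jFr$ gives $\dist(0,\partial F(y))=\dist(0,\partial f(v))$. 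Since $F(x)<F(y)<F(x)+\nu$, the KL inequality for $F$ becomes $\psi'(f(v)-f(\lambda(x)))\,\dist(0,\partial f(v))\geq 1$, so $f$ satisfies the KL property at $\lambda(x)$ with the same $\psi$ and $\nu$. This proves $(i)$, and $(ii)$ is then immediate: if $\psi$ can be chosen as $\psi(t)=ct^{1-\alpha}$ on one side, the construction above produces the same $\psi$ on the other side, i.e.\ the KL property with the same exponent $\alpha$.

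There is no substantial obstacle; the real work is packaged in Lemma~\ref{lem:dist} and Theorem~\ref{theo:main}. The step requiring the most care is the converse direction, where an arbitrary, possibly unsorted, $v$ near $\lambda(x)$ must be realized as $v=\diag(y,\jFr)$ for a point $y$ near $x$ with $\dist(0,\partial F(y))=\dist(0,\partial f(v))$; one must then verify that the resulting set $U$ is genuinely a neighbourhood of $\lambda(x)$ and that the function-value inequalities translate exactly. Taking $y=\Diag(v,\jFr)$ for a single fixed $\jFr\in\jFr(x)$ resolves all of this uniformly.
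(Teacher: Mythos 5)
Your proof is correct and follows essentially the same route as the paper's: use $\lambda^{-1}(U)$ to transfer a KL neighbourhood from $f$ to $F$, use $\Diag(\cdot,\jFr)^{-1}(\mathcal{U})$ with a fixed $\jFr\in\jFr(x)$ for the converse, and invoke Lemma~\ref{lem:dist} to equate the two subdifferential distances in each direction. Your additional remark that lower semicontinuity also transfers (via $F=f\circ\lambda$ and $f=F\circ\Diag(\cdot,\jFr)$) is a useful observation that the paper leaves implicit.
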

\begin{proof}
		First we prove item $(i)$.	
		By Theorem~\ref{theo:main} we have $x \in \dom \partial F$ if 
		and only if $\lambda(x) \in \dom \partial f$.
		Next, suppose that $f$ satisfies the KL property at $\lambda(x)$
		and let $U, \nu$ and $\psi$ be as in Definition~\ref{def:kl}.
		
		Since $\lambda$ is continuous, $\mathcal{U} \coloneqq \lambda^{-1}(U)$ is a neighbourhood of $x$. Therefore, if $y \in \mathcal{U}$ is 
		such that $F(x) < F(y) < F(x) + \nu$, we have 
\[
\lambda(y) \in U \text{ and } f(\lambda(x)) < f(\lambda(y)) < f(\lambda(x)) + \nu.
\]
		By Lemma~\ref{lem:dist} and item $(ii)$ of Definition~\ref{def:kl} applied to $f$ and $\psi$, we have
\[
		\psi'(F(y)-F(x))\dist(0,\partial F(y))= \psi'(F(y)-F(x))\dist(0,\partial f(\lambda(y))) \geq 1.
\]
		This shows that $F$ satisfies the KL property at $x$ with the same 
		$\psi$ and $\nu$.
		
		Now, we prove the converse. Suppose that $F$ satisfies 
		the KL property at $x$ and let $\mathcal{U}$ be a neighbourhood of $x$ together with $\psi$ and  $\nu$ such that 
			Definition~\ref{def:kl} is satisfied.
			
		Let $\jFr \in \jFr(x)$ and 
		$U \coloneqq \Diag(\cdot, \jFr)^{-1}(\mathcal{U})$.
		Then, whenever $v \in U$ is such that $f(\lambda(x)) < f(v) < f(\lambda(x)) + \nu$, we have
\[
\Diag(v,\jFr) \in \mathcal{U} \text{ and } F(x) < F(\Diag(v,\jFr)) < F(x) + \nu.
\]
		By item $(ii)$ of Definition~\ref{def:kl}, we have
\[
		\psi'(f(v)-f(\lambda(x)))\dist(0,\partial F(\Diag(v,\jFr))) \geq 1.
\]
		By Lemma~\ref{lem:dist}, we have 
\[
		\psi'(f(v)-f(\lambda(x)))\dist(0,\partial f(v)) \geq 1.
\]
		This shows that $f$ satisfies the KL property at $\lambda(x)$ with the same 
		$\psi$ and $\nu$, which concludes the proof of item $(i)$.
		
		Next, we observe that item $(ii)$ is a particular case of the previous item, when 	$\psi$ can be taken to be $\psi(t) = ct^{1-\alpha}$. 
\end{proof}

\begin{remark}\label{rem:comp}
	In Theorem~3.2 of \cite{LP18} there is a result about the KL-exponent of function compositions of the form $g_1(g_2(\cdot))$. 
	However, the result requires that $g_2$ be continuously differentiable, so it cannot be used to prove Theorem~\ref{theo:kl}. 
%	therefore it cannot be applied to 
%	spectral functions of the form $f(\lambda(\cdot))$ because $\lambda$ is not differentiable in general.
\end{remark}
{
	\small{
\section*{Acknowledgments}
We thank the referees for their  comments, which helped to improve the 
paper. This work was partially supported by the Grant-in-Aid for Scientific Research (B) (19H04069) and  the Grant-in-Aid for Young Scientists (19K20217) from Japan Society for the Promotion of Science.
}
}
\bibliographystyle{abbrvurl}
\bibliography{bib}

\begin{thebibliography}{10}

\bibitem{ABRS10}
H.~Attouch, J.~Bolte, P.~Redont, and A.~Soubeyran.
\newblock Proximal alternating minimization and projection methods for
  nonconvex problems: An approach based on the {K}urdyka-{{\L}}ojasiewicz
  inequality.
\newblock {\em Mathematics of Operations Research}, 35(2):438--457, 2010.

\bibitem{B06phd}
M.~Baes.
\newblock {\em Spectral functions and smoothing techniques on Jordan algebras}.
\newblock PhD thesis, Universit\'{e} catholique de Louvain, 2006.

\bibitem{B07}
M.~Baes.
\newblock Convexity and differentiability properties of spectral functions and
  spectral mappings on {E}uclidean {J}ordan algebras.
\newblock {\em Linear Algebra and its Applications}, 422(2):664 -- 700, 2007.

\bibitem{Bh97}
R.~Bhatia.
\newblock {\em Matrix Analysis}.
\newblock Graduate Texts in Mathematics. Springer New York, 1997.

\bibitem{CCT04}
J.-S. Chen, X.~Chen, and P.~Tseng.
\newblock Analysis of nonsmooth vector-valued functions associated with
  second-order cones.
\newblock {\em Mathematical Programming}, 101(1):95--117, Sep 2004.

\bibitem{FK94}
J.~Faraut and A.~Korányi.
\newblock {\em Analysis on symmetric cones}.
\newblock Oxford mathematical monographs. Clarendon Press, Oxford, 1994.

\bibitem{FB08}
L.~Faybusovich.
\newblock Several {J}ordan-algebraic aspects of optimization.
\newblock {\em Optimization}, 57(3):379--393, 2008.

\bibitem{FLT02}
M.~Fukushima, Z.~Luo, and P.~Tseng.
\newblock Smoothing functions for second-order-cone complementarity problems.
\newblock {\em SIAM Journal on Optimization}, 12(2):436--460, 2002.

\bibitem{GS10}
Y.~Gao and D.~Sun.
\newblock A majorized penalty approach for calibrating rank constrained
  correlation matrix problems.
\newblock Technical report, Department of Mathematics, National University of
  Singapore, 2010.

\bibitem{GTT18}
J.~Gotoh, A.~Takeda, and K.~Tono.
\newblock {DC} formulations and algorithms for sparse optimization problems.
\newblock {\em Mathematical Programming}, 169(1):141--176, May 2018.

\bibitem{Go17}
M.~S. Gowda.
\newblock Positive and doubly stochastic maps, and majorization in {E}uclidean
  {J}ordan algebras.
\newblock {\em Linear Algebra and its Applications}, 528:40 -- 61, 2017.

\bibitem{GJ17}
M.~S. Gowda and J.~Jeong.
\newblock Commutation principles in {E}uclidean {J}ordan algebras and normal
  decomposition systems.
\newblock {\em SIAM Journal on Optimization}, 27(3):1390--1402, 2017.

\bibitem{HUL96}
J.-B. Hiriart-Urruty and C.~Lemar\'echal.
\newblock {\em Convex Analysis and Minimization Algorithms I: Fundamentals}.
\newblock Grundlehren der mathematischen Wissenschaften. Springer Berlin
  Heidelberg, 1996.

\bibitem{JG16}
J.~Jeong and M.~S. Gowda.
\newblock Spectral cones in {E}uclidean {J}ordan algebras.
\newblock {\em Linear Algebra and its Applications}, 509:286 -- 305, 2016.

\bibitem{JG17}
J.~Jeong and M.~S. Gowda.
\newblock Spectral sets and functions on {E}uclidean {J}ordan algebras.
\newblock {\em Linear Algebra and its Applications}, 518:31 -- 56, 2017.

\bibitem{KTX09}
L.~Kong, L.~Tun{\c{c}}el, and N.~Xiu.
\newblock Clarke generalized {J}acobian of the projection onto symmetric cones.
\newblock {\em Set-Valued and Variational Analysis}, 17(2):135--151, Jun 2009.

\bibitem{Le96}
A.~S. Lewis.
\newblock Convex analysis on the {H}ermitian matrices.
\newblock {\em SIAM Journal on Optimization}, 6(1):164--177, 1996.

\bibitem{Le96_2}
A.~S. Lewis.
\newblock Derivatives of spectral functions.
\newblock {\em Mathematics of Operations Research}, 21(3):576--588, 1996.

\bibitem{Le99}
A.~S. Lewis.
\newblock Nonsmooth analysis of eigenvalues.
\newblock {\em Mathematical Programming}, 84(1):1--24, Jan 1999.

\bibitem{LS05}
A.~S. Lewis and H.~S. Sendov.
\newblock Nonsmooth analysis of singular values. part {II}: Applications.
\newblock {\em Set-Valued Analysis}, 13(3):243--264, Sep 2005.

\bibitem{LP18}
G.~Li and T.~K. Pong.
\newblock Calculus of the exponent of {K}urdyka--{{\L}}ojasiewicz inequality
  and its applications to linear convergence of first-order methods.
\newblock {\em Foundations of Computational Mathematics}, 18(5):1199--1232, Oct
  2018.

\bibitem{OMA16}
A.~W. Marshall, I.~Olkin, and B.~C. Arnold.
\newblock {\em Inequalities: Theory of Majorization and Its Applications}.
\newblock Springer Series in Statistics. Springer, 2nd edition, 2016.

\bibitem{RSS13}
H.~Ram\'irez~C., A.~Seeger, and D.~Sossa.
\newblock {C}ommutation {P}rinciple for {V}ariational {P}roblems on {E}uclidean
  {J}ordan algebras.
\newblock {\em SIAM Journal on Optimization}, 23(2):687--694, 2013.

\bibitem{RW}
R.~T. Rockafellar and R.~J.-B. Wets.
\newblock {\em Variational Analysis}.
\newblock Springer Berlin Heidelberg, 2009.

\bibitem{Se07}
H.~Sendov.
\newblock Nonsmooth analysis of {L}orentz invariant functions.
\newblock {\em SIAM Journal on Optimization}, 18(3):1106--1127, 2007.

\bibitem{SS08}
D.~Sun and J.~Sun.
\newblock Löwner's operator and spectral functions in {E}uclidean {J}ordan
  algebras.
\newblock {\em Mathematics of Operations Research}, 33(2):421--445, 2008.

\end{thebibliography}
\end{document}